\documentclass[11pt]{article}
\usepackage{graphicx}
\usepackage{amscd, amssymb}
\usepackage{enumerate}
\usepackage{hyperref}
% lscape.sty Produce landscape pages in a (mainly) portrait document.
\usepackage{lscape}
\textwidth = 6.5 in
\textheight = 9 in
\oddsidemargin = 0.0 in
\evensidemargin = 0.0 in
\topmargin = 0.0 in
\headheight = 0.0 in
\headsep = 0.0 in
\parindent = 0.0in

\newenvironment{eq}{\begin{equation}}{\end{equation}}
\newenvironment{proof}{{\bf Proof}:}{\vskip 5mm }

\newtheorem{proposition}{Proposition}[subsection]
\newtheorem{lemma}[proposition]{Lemma}
\newtheorem{definition}[proposition]{Definition}
\newtheorem{theorem}[proposition]{Theorem}

\newtheorem{example}[proposition]{Example}
\newtheorem{remark}[proposition]{Remark}

\newtheorem{problem}[proposition]{Problem}
\newtheorem{construction}[proposition]{Construction}
\newcommand{\llabel}[1]{\label{#1}}
\newcommand{\comment}[1]{}
\newcommand{\sr}{\rightarrow}

\newcommand{\nn}{{\bf N\rm}}
\newcommand{\nat}{\nn}

\newcommand{\wt}{\widetilde}

\newcommand{\wh}{\widehat}

\newcommand{\mbind}{\rho}
\newcommand{\hc}{\circ_{T}}

{\vskip
3mm}

\newcommand{\spc}{{\,\,\,\,\,\,\,}}

\newcommand{\RR}{{\bf RR}}
\newcommand{\LM}{{\bf LM}}

\begin{document}
\parskip = 2mm
\begin{center}
{\bf\Large C-system of a module over a $Jf$-relative monad\footnote{\em 2000 Mathematical Subject Classification: 
18D99, % category theory and homological algebra, categories with structures, none of the above, but in this section
18C50  %	Categorical semantics of formal languages 
}}

%{\keywords Contextual categories, relative monads, modules over monads.}

\vspace{3mm}

{\large\bf Vladimir Voevodsky}\footnote{School of Mathematics, Institute for Advanced Study,
Princeton NJ, USA. e-mail: vladimir@ias.edu}
\vspace {3mm}

{\large\bf September 2015}  
\end{center}

\begin{abstract}
This is the second paper in a series started in \cite{Csubsystems}. Let $F$ be the category with the set of objects $\nat$ and morphisms being the functions between the standard finite sets of the corresponding cardinalities. Let $Jf:F\sr Sets$ be the obvious functor from this category to the category of sets. In this paper we construct, for any relative monad $\RR$ on $Jf$ and a left module $\LM$ over $\RR$, a C-system $C(\RR,\LM)$ and explicitly compute the action of the four B-system operations on its B-sets. 
In the following paper it is used to provide a rigorous mathematical approach to the construction of the C-systems underlying the term models of a wide class of dependent type theories. 
\end{abstract}

%(2014.09.27) Make a note about the functoriality of C(\RR,\LM) on the "large module category" %of Hirschowitz-Maggesi. 
%Change the name of the monad from M to R. Also \mu, \eta for the monad structure and 
%\rho for the module structure.

%$$\mathfrak{S}$$

%??? Make the dependence on the choice of a universe explicit? Then need to understand what is required from a set $UU$ or a type universe $UU$ in order to be able to realize the results of this paper. Can one consider LM in Prop? Is it relevant for the formalization of predicate logic?

\tableofcontents

\subsection{Introduction}

The first few steps in all approaches to the semantics of dependent type theories remain insufficiently understood. The constructions which have been worked out in detail in the case of a few particular type systems by dedicated authors are being extended to the wide variety of type systems under consideration today by analogy. This is not acceptable in mathematics. Instead we should be able to obtain the required results for new type systems by {\em specialization} of general theorems and constructions formulated for abstract objects the instances of which combine together to produce a given type system. 

%???!!## Explain through reference to Martin Hofmann Section 2.3 about pre-syntax. The word "associative" does not even appear in that paper. Sections 2.3, 2.4 The "construction" of term model (which is what the present paper is about) in 3.1 lacks proofs and even precise statements entirely. 

%Also a reference to Jacob where he says that proving associativity is non-trivial. 

%The Ty/Tm definition of CwF appears already in Hofmann (3.1). 

%Mention that our description of general sub-quotients allows to use our results for the study of the semantics of type systems with context-dependent computation.

An approach that follows this general philosophy was outlined in \cite{CMUtalk}. In this approach the connection between the type theories, which belong to the concrete world of logic and programming, and abstract mathematical concepts such as sets or homotopy types is constructed through the intermediary of C-systems. 

C-systems were introduced in \cite{Cartmell0} (see also \cite{Cartmell1}) under the name ``contextual categories''. A modified axiomatics of C-systems and the construction of new C-systems as sub-objects and regular quotients of the existing ones in a way convenient for use in type-theoretic applications are considered in \cite{Csubsystems}. A C-system equipped with additional operations corresponding to the inference rules of a type theory is called a model or a C-system model of these rules or of this type theory. There are other classes of objects on which one can define operations corresponding to inference rules of type theories most importantly categories with families or CwFs. They lead to other classes of models.  

In the approach of \cite{CMUtalk}, in order to provide a mathematical representation (semantics) for a type theory one constructs two C-systems. One C-system, which we will call the proximate or term C-system of a type theory, is constructed from formulas of the type theory using, in particular, the main  construction of the present paper.  The second C-system is constructed from the category of abstract mathematical objects using the results of \cite{Cfromauniverse}. Both C-systems are then equipped with additional operations corresponding to the inference rules of the type theory making them into models of type theory.  The model whose underlying C-system is the term C-system is called the term model. 

A crucial component of this approach is the expected result that for a particular class of the inference rules the term model is an initial object in the category of models. This is known as the Initiality Conjecture. In the case of the pure Calculus of Constructions with a ``decorated''  application operation this conjecture was proved in 1988 by Thomas Streicher \cite{Streicher}. The problem of finding an appropriate formulation of the general version of the conjecture and of proving this general version will be the subject of future work. 

For such inference rules, then, there is a unique homomorphism from the term C-system to the abstract C-system that is compatible with the corresponding systems of operations. Such homomorphisms are called representations of the type theory. More generally, any functor from the category underlying the term C-system of the type theory to another category may be called a representation of the type theory in that category. Since objects and morphisms of term models are built from formulas of the type theory and objects and morphisms of abstract C-systems are built from mathematical objects such as sets or homotopy types and the corresponding functions, such representations provide a mathematical meaning to formulas of type theory. 

The existence of these homomorphisms in the particular case of the ``standard univalent models'' of  Martin-L\"{o}f type theories and of the Calculus of Inductive Constructions (CIC) provides the only known justification for the use of the proof assistants such as Coq for the formalization of mathematics in the univalent style (see \cite{UniMath}, \cite{UniMath2015}). 

Only if we know that the initiality result holds for a given type theory can we claim that a model defines a representation. A similar problem also arises in the predicate logic but there, since one considers only one fixed system of syntax and inference rules, it can and had been solved once without the development of a general theory. The term models for a class of type theories can be obtained by considering slices of the term model of the type theory called Logical Framework (LF), but unfortunately it is unclear how to extend this approach to type theories that have more substitutional (definitional) equalities than LF itself.

A construction of a model for the version of the Martin-L\"{o}f type theory that is used in the UniMath library (\cite{UniMath},\cite{UniMath2015}) is sketched in \cite{KLV1}. At the time when that paper was written it was unfortunately assumed that a proof of the initiality result can be found in the existing body of work on type theory which is reflected  in \cite[Theorem 1.2.9]{KLV1} (cf. also \cite[Example 1.2.3]{KLV1} that claims as obvious everything that is done in both the present paper and in \cite{Csubsystems}).  Since then it became clear that this is not the case and that a mathematical theory leading to the initiality theorem and providing a proof of such a theorem is lacking and needs to be developed. 

As the criteria for what constitutes an acceptable proof were becoming more clear as a result of continuing work on formalization, it also became clear that more detailed and general proofs need to be given to many of the theorems of \cite{KLV1} that are related to the model itself. For the two of the several main groups of inference rules of current type theories it is done in \cite{fromunivwithPi} and \cite{fromunivwithpaths}. Other groups of inference rules will be considered in further papers of that series. 

In this paper we describe a purely algebraic construction that defines a C-system $C(\RR,\LM)$ starting with a pair $(\RR,\LM)$ where $\RR$ is a relative monad on the functor $Jf:F\sr Sets$ (see below) and $\LM$ is a (left) module over this monad. 

This construction provides a step in the path from the description of a type theory by a collection of inference rules, as is customary in the type theory papers, to the term model of this type theory as a C-system equipped with a system of operations corresponding to these rules. 

On this path one starts by defining from the inference rules a two-sorted binding signature that describes the raw syntax of type and element constructors of the type theory. Then one defines from this two-sorted binding signature a pair $(\RR,\LM)$ and, applying the construction of this paper, obtains the C-system $C(\RR,\LM)$ of the raw syntax of the theory. 

Such C-systems have not been considered previously probably because from the perspective of logic they are hard to interpret. However they provide a very convenient stepping stone to more complex term C-systems of type theories. 

We defer the detailed descriptions both of the step preceding the one described here and of the one following it to future papers. In the remaining part of the introduction we describe the content of the paper without further references to type theory. 

We start the paper with two sections where we introduce some constructions applicable to general C-systems. 

On the sets of objects of any C-system one can consider the partial ordering defined by the condition that $X\le Y$ if and only if $l(X)\le l(Y)$ and $X=ft^{l(Y)-l(X)}(Y)$. In the first section we re-introduce some of the objects and constructions defined in \cite{Csubsystems} using the length function using this partial ordering instead. This allows to avoid the use of natural numbers in some of the arguments that significantly simplifies the proofs. 

In the second section we construct for any C-system $CC$ and a presheaf $F$ on the category underlying $CC$ a new C-system $CC[F]$ that we call the $F$-extension of $CC$. The C-systems of this form remind in some way the affine spaces over schemes in algebraic geometry. While the geometry of affine spaces in itself is not very interesting their sub-spaces encompass all affine algebraic varieties of finite type . Similarly, while the C-systems $CC[F]$ look to be not very different from $CC$ their sub-systems and more generally regular sub-quotients, even in the case of the simplest C-systems $CC=C(\RR)$ corresponding to Lawvere theories (see Section \ref{CRR}), include all of the term C-systems of type theories.  

Regular sub-quotients of any C-system $CC$ are classified by quadruples $(B,\wt{B}, \sim,\simeq)$ of the following form. 

Let $\wt{Ob}(CC)$ be the set of sections of the p-morphisms of $CC$, i.e., the subset in $Mor(CC)$ that consists of morphisms $s$ such that $dom(s)=ft(codom(f))$ and $s\circ p_{codom(f)}=Id_{dom(f)}$.  The sets $Ob(CC)$ and $\wt{Ob}(CC)$ are called the B-sets of a C-system and can also be denoted as $B(CC)$ and $\wt{B}(CC)$. 

The first two components $B$ and $\wt{B}$ of the quadruple are subsets in the sets $Ob(CC)$ and $\wt{Ob}(CC)$ respectively. The next two components are equivalence relations on $B$ and $\wt{B}$. To correspond to a regular sub-quotient the pair $(B,\wt{B})$ should be closed under the eight B-system operations on $(B(CC),\wt{B}(CC))$ and the equivalence relations of the pair $(\sim,\simeq)$ should be compatible with the restrictions of these eight operations to $(B,\wt{B})$ as well as to satisfy three additional simple conditions (see \cite[Proposition 5.4]{Csubsystems}) that involve the length function $l:B\sr\nat$ on $B$. 

Therefore, in order to be able to describe regular sub-quotients of a C-system one needs to know the B-sets of this C-system, the length function and the action of 
the eight B-system operations on these sets. Such a collection of data is called a pre-B-system (see \cite{Bsystems}). The main result of this paper is a detailed description of the pre-B-systems of the form $(B(CC[F]),\wt{B}(CC[F]))$ for a particular class of ``coefficient'' C-systems $CC$ (see below). 

In Section \ref{Jfrel} we first remind the notion of a relative monad on a functor $J:C\sr D$ that was introduced in \cite[Def.1, p. 299]{ACU} and considered in more detail in \cite{ACU2}. Then we focus our attention on relative monads over the functor $Jf$ that is defined as follows. 

For two sets $X$ and $Y$ let $Fun(X,Y)$ be the set of functions from $X$ to $Y$.  Let $stn(n)$ be the standard set with $n$ elements that we take to be the subset of $\nat$ that consists of numbers $<n$. Consider the category $F$ such that $Ob(F)=\nn$ and
$$Mor(F)=\cup_{m,n}Fun(stn(m),stn(n))$$
The functor $Jf$ is the obvious functor from $F$ to the category of sets. 

In \cite{LandJf} we constructed an equivalence between the category of $Jf$-relative monads and the category of Lawvere theories whose component functor from the relative monads to Lawvere theories is denoted $RML$. A key component of this equivalence is the construction of the Kleisli category $K(\RR)$ of a relative monad $\RR$ given in \cite{ACU2}.  Most of Section \ref{Jfrel} is occupied by simple computations in $K(\RR)$ for $Jf$-relative monads $\RR$.

In \cite{LandC} we constructed an isomorphism between the category of Lawvere theories and the category of l-bijective C-systems - the C-systems $CC$ where the length function $Ob(CC)\sr \nat$ is a bijection. In Section \ref{CRR} we consider the C-system $C(\RR)$ corresponding to the Lawvere theory $RML(\RR)$ defined by a $Jf$-relative monad $\RR$. The underlying category of this C-system is $K(\RR)^{op}$. The main result of this section is the description of the B-sets of $C(RR)$ and of the actions of the B-system operations on these sets.

In the final Section \ref{CRRLM} we apply the construction of Section \ref{Fext} to $C(\RR)$ taking into account that the functors $\LM:C(\RR)^{op}\sr Sets$ are the same as the functors $K(\RR)\sr Sets$ that are the same as the relative (left) modules over $Jf$. In (\ref{2016.01.21.eq3}) and Construction \ref{2015.08.22.constr1} we compute the B-sets $B(C(\RR,\LM))$ and $\wt{B}(C(\RR,\LM))$ and in Theorem \ref{2015.08.26.th2} the action of the B-system operations on these sets. 

In the next paper we will connect these computations to the conditions that the valid judgements of a type theory must satisfy in order for the term C-system of this type theory to be defined. 

\vspace{5mm}

Since this paper as well as other papers in the series on C-systems is expected to play a role in the mathematically rigorous construction of  the simplicial univalent representation of the UniMath language and the Calculus of Inductive Constructions and since such a construction itself can not rely on the univalent foundations the paper is written from the perspective of the Zermelo-Fraenkel formalism. 

\vspace{5mm}

The methods of the paper are fully constructive. We use neither the axiom of excluded middle nor the axiom of choice. The paper is written in the formalization-ready style and should be easily formalizable both in the UniMath and in the ZF. 

\vspace{5mm}

We use the diagrammatic order of composition, i.e., for morphisms $f:X\sr Y$ and $g:Y\sr Z$ we write their composition as $f\circ g$. 

We fix a universe $U$ without making precise what conditions on the set $U$ we require. It is clear that it is sufficient for all constructions of this paper to require $U$ to be a Grothendieck universe. However, it is likely that a much weaker set of conditions on $U$ is sufficient for our purposes. In all that follows we write $Sets$ instead of $Sets(U)$.  

This is one the papers extending the material which I started to work on in \cite{NTS}. I would like to thank the Institute Henri Poincare in Paris and the organizers of the ``Proofs'' trimester for their hospitality during the preparation of the first version of this paper. The work on this paper was facilitated by discussions with Benedikt Ahrens, Richard Garner and Egbert Rijke.  

%###??? Make a note that we do not use dependent types in the paper which is why we can not simply write Hom(\wh{n},\wh{m})=R(stn(n))^{stn(m)}

\subsection{Some general remarks on C-systems}
\llabel{onCsystems}
Recall that for a C-system $CC$, and object $\Gamma$ of $CC$ such that $l(\Gamma)\ge i$ we let $p_{\Gamma,i}$ denote the morphism $\Gamma\sr ft^i(\Gamma)$ defined inductively as
$$p_{\Gamma,0}=Id_{\Gamma}$$
$$p_{\Gamma,i+1}=p_{\Gamma}\circ p_{ft(\Gamma),i}$$
For $\Gamma'$ such that $l(\Gamma')\ge i$ and $f:\Gamma\sr ft^i(\Gamma')$ we let $f^*(\Gamma',i)$ and 
$$q(f,\Gamma',i):f^*(\Gamma',i)\sr \Gamma'$$ 
define a pair of an object and a morphism defined inductively as
\begin{eq}\llabel{2016.01.31.eq2}
f^*(\Gamma',0)=\Gamma\spc q(f,\Gamma',0)=f$$
$$f^*(\Gamma', i+1)=q(f,ft(\Gamma'),i)^*(\Gamma')\spc q(f,\Gamma',i+1)=q(q(f,ft(\Gamma'),i),\Gamma')
\end{eq}

For $\Gamma,\Gamma'$ in a C-system let us write $\Gamma\le \Gamma'$ if $l(\Gamma)\le l(\Gamma')$ and $\Gamma=ft^{l(\Gamma')-l(\Gamma)}(\Gamma')$. We will write $\Gamma<\Gamma'$ if $\Gamma\le \Gamma'$ and $l(\Gamma)<l(\Gamma')$. 

If $\Gamma'$ is over $\Gamma$ we will denote by $p_{\Gamma',\Gamma}$ the morphism
$$p_{\Gamma',l(\Gamma')-l(\Gamma)}:\Gamma'\sr \Gamma$$
If $\Gamma'$ and $\Gamma''$ are over $\Gamma$ then we have morphisms 
$$p_{\Gamma',\Gamma}:\Gamma'\sr \Gamma$$
$$p_{\Gamma'',\Gamma}:\Gamma''\sr\Gamma$$
and we say that a morphism $f:\Gamma'\sr \Gamma''$ is over $\Gamma$ if 
$$f\circ p_{\Gamma,\Gamma''}=p_{\Gamma,\Gamma'}$$
If $\Gamma'$ is an object over $\Delta$ and  $f:\Gamma\sr \Delta$ is a morphism then let us denote simply by $f^*(\Gamma')$ the object $f^*(\Gamma',n)$ where $n=l(\Gamma')-l(\Delta)$. Note that $n$ can always be inferred from $f$ and $\Gamma'$. 

Similarly we will write simply $q(f,\Gamma')$ for $q(f,\Gamma',n)$ since $n$ can be inferred as $l(\Gamma')-l(codom(f))$. 
\begin{lemma}
\llabel{2015.08.23.l1a}
Let $\Gamma',\Gamma''$ be objects over $\Delta$, $a:\Gamma'\sr \Gamma''$ a morphism over $\Delta$ and $f:\Gamma\sr\Delta$ a morphism. Then there is a unique morphism $f^*(a):f^*(\Gamma')\sr f^*(\Gamma'')$ over $\Gamma$ such that the square
$$
\begin{CD}
f^*(\Gamma') @>q(f,\Gamma')>> \Gamma'\\
@Vf^*(a)VV @VVaV\\
f^*(\Gamma'') @>q(f,\Gamma'')>> \Gamma''
\end{CD}
$$
commutes.
\end{lemma}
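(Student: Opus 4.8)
The plan is to obtain $f^*(a)$ from the universal property of the pullback square that defines $f^*(\Gamma'')$. The starting point is the observation that the outer comparison square
$$
\begin{CD}
f^*(\Gamma'') @>q(f,\Gamma'')>> \Gamma''\\
@Vp_{f^*(\Gamma''),\Gamma}VV @VVp_{\Gamma'',\Delta}V\\
\Gamma @>f>> \Delta
\end{CD}
$$
is a pullback. This is the height-$(l(\Gamma'')-l(\Delta))$ square, and it is obtained by pasting the elementary C-system pullback squares that occur in the inductive definition (\ref{2016.01.31.eq2}) of $f^*(\Gamma'',i)$ and $q(f,\Gamma'',i)$; since a pasting of pullback squares is again a pullback, the outer square is one as well. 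The same remark applies with $\Gamma'$ in place of $\Gamma''$.

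Granting this, I would define $f^*(a)$ as the morphism into the pullback $f^*(\Gamma'')$ induced by the cone with legs $q(f,\Gamma')\circ a:f^*(\Gamma')\sr\Gamma''$ and $p_{f^*(\Gamma'),\Gamma}:f^*(\Gamma')\sr\Gamma$ over the cospan formed by $p_{\Gamma'',\Delta}:\Gamma''\sr\Delta$ and $f:\Gamma\sr\Delta$. To see that this is indeed a cone, one has to check the single compatibility equation $q(f,\Gamma')\circ a\circ p_{\Gamma'',\Delta}=p_{f^*(\Gamma'),\Gamma}\circ f$. Here I would rewrite $a\circ p_{\Gamma'',\Delta}=p_{\Gamma',\Delta}$, which (in diagrammatic order) is exactly the hypothesis that $a$ is over $\Delta$, and then use the commutativity of the $\Gamma'$ pullback square, $q(f,\Gamma')\circ p_{\Gamma',\Delta}=p_{f^*(\Gamma'),\Gamma}\circ f$, to conclude. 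The universal property now yields a unique $f^*(a):f^*(\Gamma')\sr f^*(\Gamma'')$ satisfying the two projection equations $f^*(a)\circ q(f,\Gamma'')=q(f,\Gamma')\circ a$ and $f^*(a)\circ p_{f^*(\Gamma''),\Gamma}=p_{f^*(\Gamma'),\Gamma}$. The first of these is precisely the commutativity of the square in the statement, and the second is precisely the assertion that $f^*(a)$ is over $\Gamma$.

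Uniqueness is then immediate and needs no further work: if $g:f^*(\Gamma')\sr f^*(\Gamma'')$ is any morphism over $\Gamma$ for which the square commutes, then $g$ satisfies both $g\circ q(f,\Gamma'')=q(f,\Gamma')\circ a$ and $g\circ p_{f^*(\Gamma''),\Gamma}=p_{f^*(\Gamma'),\Gamma}$, i.e.\ the same two projection equations as $f^*(a)$; by the uniqueness clause in the universal property of the pullback $f^*(\Gamma'')$ this forces $g=f^*(a)$.

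The only real content, and hence the step I expect to be the main obstacle, is the first one: verifying that the iterated square for $q(f,\Gamma'')$ is a genuine pullback rather than merely a commutative square. Depending on what has already been established about the operations $f^*(-,i)$ and $q(-,-,i)$, this is either a direct citation or a short induction on the height $l(\Gamma'')-l(\Delta)$ using the pasting lemma, together with the length bookkeeping that $ft^{\,l(\Gamma'')-l(\Delta)}(f^*(\Gamma''))=\Gamma$ so that $f^*(\Gamma')$ and $f^*(\Gamma'')$ are indeed over $\Gamma$ and the phrase ``over $\Gamma$'' is meaningful. Once the pullback property is in hand, everything else is a purely formal application of its universal property.
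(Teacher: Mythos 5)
Your proof is correct and follows essentially the same route as the paper: both arguments observe that the composite square with legs $q(f,\Gamma'')$, $p_{\Gamma'',\Delta}$, $p_{f^*(\Gamma''),\Gamma}$, $f$ is a pullback (being a vertical pasting of $l(\Gamma'')-l(\Delta)$ elementary C-system pullback squares) and then obtain $f^*(a)$, with its uniqueness, from the universal property via the two equations $f^*(a)\circ q(f,\Gamma'')=q(f,\Gamma')\circ a$ and $f^*(a)\circ p_{f^*(\Gamma''),\Gamma}=p_{f^*(\Gamma'),\Gamma}$. If anything, you are slightly more explicit than the paper in checking the cone compatibility condition $q(f,\Gamma')\circ a\circ p_{\Gamma'',\Delta}=p_{f^*(\Gamma'),\Gamma}\circ f$, which the paper leaves implicit.
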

\begin{proof}
We have a square
\begin{eq}\llabel{2015.08.23.eq3}
\begin{CD}
f^*(\Gamma'') @>q(f,\Gamma'')>> \Gamma''\\
@Vp_{f^*(\Gamma''),\Gamma}VV @VVp_{\Gamma'',\Delta}V\\
\Gamma @>f>> \Delta\\
\end{CD}
\end{eq}
This square is a pull-back square as a vertical composition of $l(\Gamma'')-l(\Delta)$ pull-back squares. We define $f^*(a)$ as the unique morphism such that 
\begin{eq}\llabel{2015.08.23.eq1}
f^*(a)\circ q(f,\Gamma'')=q(f,\Gamma')\circ a
\end{eq}
and
\begin{eq}\llabel{2015.08.23.eq2}
f^*(a)\circ p_{f^*(\Gamma''),\Gamma}=p_{f^*(\Gamma'),\Gamma}
\end{eq}
The first of these two equalities is equivalent to the commutativity of the square (\ref{2015.08.23.eq3}) and the second to the condition that $f^*(a)$ is a morphism over $\Gamma$.
\end{proof}
\begin{lemma}
\llabel{2015.08.29.l2}
Let $a:\Gamma'\sr\Gamma''$ be a morphism over $\Delta$, $\Gamma'''$ another object over $\Delta$ and suppose that $a$ is a morphism over $\Gamma'''$. Let $f:\Gamma\sr \Delta$ be a morphism. Then one has
\begin{eq}\llabel{2015.08.29.eq2}
f^*(a)=q(f,\Gamma''')^*(a)
\end{eq}
\end{lemma}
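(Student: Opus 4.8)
The plan is to derive the identity (\ref{2015.08.29.eq2}) from the uniqueness clause of Lemma \ref{2015.08.23.l1a}. Concretely, I intend to show that the right-hand side $q(f,\Gamma''')^*(a)$ has the same domain $f^*(\Gamma')$ and codomain $f^*(\Gamma'')$ as $f^*(a)$, that it is a morphism over $\Gamma$, and that it satisfies the square (\ref{2015.08.23.eq1}) that characterizes $f^*(a)$; uniqueness in Lemma \ref{2015.08.23.l1a} then forces the two to coincide. First I would record the associativity of substitution that makes the statement typecheck. Since $\Gamma'''$ is over $\Delta$ and $\Gamma',\Gamma''$ are over $\Gamma'''$, both are over $\Delta$, so $f^*(\Gamma')$, $f^*(\Gamma'')$ and $f^*(a)$ are defined. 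Pulling back in two stages, the one-step clause $q(f,\Gamma',i+1)=q(q(f,ft(\Gamma'),i),\Gamma')$ of the inductive definition (\ref{2016.01.31.eq2}) gives, by induction on $l(\Gamma')-l(\Gamma''')$, the object identity $f^*(\Gamma')=q(f,\Gamma''')^*(\Gamma')$ together with the morphism identity $q(f,\Gamma')=q(q(f,\Gamma'''),\Gamma')$, and likewise for $\Gamma''$. In particular the two sides of (\ref{2015.08.29.eq2}) have matching domain $f^*(\Gamma')$ and codomain $f^*(\Gamma'')$, and these are objects over $f^*(\Gamma''')=dom(q(f,\Gamma'''))$.

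Next I would verify that $q(f,\Gamma''')^*(a)$ is a morphism over $\Gamma$. Applying Lemma \ref{2015.08.23.l1a} to $a$ viewed as a morphism over $\Gamma'''$ and to the morphism $q(f,\Gamma'''):f^*(\Gamma''')\sr\Gamma'''$ produces $q(f,\Gamma''')^*(a)$ as a morphism over $f^*(\Gamma''')$. Since $f^*(\Gamma''')$ is itself over $\Gamma$ and the canonical projections compose, $p_{f^*(\Gamma'),\Gamma}=p_{f^*(\Gamma'),f^*(\Gamma''')}\circ p_{f^*(\Gamma'''),\Gamma}$ and similarly for $\Gamma''$, so being over $f^*(\Gamma''')$ entails being over $\Gamma$ by transitivity of the ``morphism over'' relation.

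Finally I would check that $q(f,\Gamma''')^*(a)$ satisfies the square (\ref{2015.08.23.eq1}) defining $f^*(a)$. Lemma \ref{2015.08.23.l1a}, applied in the base $\Gamma'''$ along $q(f,\Gamma''')$, gives the equation $q(f,\Gamma''')^*(a)\circ q(q(f,\Gamma'''),\Gamma'')=q(q(f,\Gamma'''),\Gamma')\circ a$. Substituting the associativity identities $q(q(f,\Gamma'''),\Gamma'')=q(f,\Gamma'')$ and $q(q(f,\Gamma'''),\Gamma')=q(f,\Gamma')$ from the first step turns this into $q(f,\Gamma''')^*(a)\circ q(f,\Gamma'')=q(f,\Gamma')\circ a$, which is precisely (\ref{2015.08.23.eq1}) with $q(f,\Gamma''')^*(a)$ in place of $f^*(a)$. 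Both morphisms are over $\Gamma$ and satisfy this square, so the uniqueness asserted in Lemma \ref{2015.08.23.l1a} yields (\ref{2015.08.29.eq2}).

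I expect the only genuinely computational point, and hence the main obstacle, to be the associativity of substitution recorded in the first step: the identities $f^*(\Gamma')=q(f,\Gamma''')^*(\Gamma')$ and $q(f,\Gamma')=q(q(f,\Gamma'''),\Gamma')$ (and their $\Gamma''$-analogues). These follow by a routine induction from the one-step clause in (\ref{2016.01.31.eq2}), but they carry all the bookkeeping; once they are available, the remainder is a purely formal invocation of the uniqueness in Lemma \ref{2015.08.23.l1a}.
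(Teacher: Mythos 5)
Your proof is correct and takes essentially the same route as the paper's: both arguments show that $q(f,\Gamma''')^*(a)$ is a morphism over $\Gamma$ (by way of being a morphism over $f^*(\Gamma''')$) and that it satisfies the characterizing equation (\ref{2015.08.23.eq1}), and then conclude by the uniqueness clause of Lemma \ref{2015.08.23.l1a}. The only difference is presentational: you make explicit, as an induction on $l(\Gamma')-l(\Gamma''')$ from the one-step clause of (\ref{2016.01.31.eq2}), the compatibility identities $q(f,\Gamma')=q(q(f,\Gamma'''),\Gamma')$ and $f^*(\Gamma')=q(f,\Gamma''')^*(\Gamma')$ (and their $\Gamma''$-analogues), which the paper compresses into the phrase ``this follows immediately from its definition.''
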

\begin{proof}
The morphisms involved in the proof can be seen on the diagram
$$
\begin{CD}
f^*(\Gamma') @>q(f,\Gamma')>> \Gamma'\\
@Vf^*(a)VV @VVaV\\
f^*(\Gamma'') @>q(f,\Gamma'')>> \Gamma''\\
@Vp_{f^*(\Gamma''),f^*(\Gamma''')}VV @VVp_{\Gamma'',\Gamma'''}V\\
f^*(\Gamma''') @>q(f,\Gamma''')>> \Gamma'''\\
@Vp_{f^*(\Gamma'''),\Gamma}VV @VVp_{\Gamma''',\Delta}V\\
\Gamma @>f>> \Delta
\end{CD}
$$
The right hand side of (\ref{2015.08.29.eq2}) is a morphism over $f^*(\Gamma''')$ and therefore a morphism over $\Gamma$. It remains to verify that it satisfies equation (\ref{2015.08.23.eq1}). This follows immediately from its definition. 
\end{proof}
We will also need the following facts about homomorphisms of C-systems. 
\begin{lemma}
\llabel{2015.09.03.l2}
Let $F:CC\sr CC'$ be a homomorphism of C-systems. Then one has:
\begin{enumerate}
\item for $\Gamma\in CC$ and $i\in\nat$ one has $F(p_{\Gamma,i})=p_{F(\Gamma),i}$,
\item for $\Gamma,\Gamma'\in CC$, $\Gamma\le \Gamma'$ implies $F(\Gamma)\le F(\Gamma')$ and similarly for $<$,
\item for $\Gamma'\ge \Delta$ and $f:\Gamma\sr \Delta$ one has
$$F(f^*(\Gamma'))=(F(f))^*(F(\Gamma'))$$
\item for $\Gamma',\Gamma''\ge \Gamma$, $a:\Gamma'\sr \Gamma''$ over $\Delta$ and $f:\Gamma\sr \Delta$ one has
$$F(f^*(a))=(F(f))^*(F(a))$$
\item for $\Gamma$ such that $l(\Gamma)>0$ one has
$$F(\delta(\Gamma))=\delta(F(\Gamma))$$
\end{enumerate}
\end{lemma}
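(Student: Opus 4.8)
The plan is to establish the five items in the order listed, since each later item feeds on the earlier ones together with the single-step conditions that define a C-system homomorphism: $F(ft(\Gamma))=ft(F(\Gamma))$, $F(p_\Gamma)=p_{F(\Gamma)}$, preservation of the length function, and preservation of the canonical squares, i.e. $F(f^*(\Gamma'))=(F(f))^*(F(\Gamma'))$ and $F(q(f,\Gamma'))=q(F(f),F(\Gamma'))$ in the single-step case ($l(\Gamma')-l(ft(\Gamma'))=1$). Items 1--5 are exactly the promotions of these hypotheses to their iterated forms. For item 1 I would induct on $i$ along the recursion defining $p_{\Gamma,i}$: the case $i=0$ is $F(Id_\Gamma)=Id_{F(\Gamma)}$, and for the step functoriality gives $F(p_{\Gamma,i+1})=F(p_\Gamma)\circ F(p_{ft(\Gamma),i})$, which by $F(p_\Gamma)=p_{F(\Gamma)}$, by $F(ft(\Gamma))=ft(F(\Gamma))$, and by the inductive hypothesis becomes $p_{F(\Gamma)}\circ p_{ft(F(\Gamma)),i}=p_{F(\Gamma),i+1}$. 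Item 2 is then immediate: a trivial induction from $F(ft(-))=ft(F(-))$ yields $F(ft^n(\Gamma'))=ft^n(F(\Gamma'))$, so $\Gamma=ft^{l(\Gamma')-l(\Gamma)}(\Gamma')$ maps to $F(\Gamma)=ft^{l(F(\Gamma'))-l(F(\Gamma))}(F(\Gamma'))$, and length preservation supplies the inequality $l(F(\Gamma))\le l(F(\Gamma'))$ and transports the strict case.

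The technical core is item 3, which I would prove by a simultaneous induction on the iteration index in the recursion (\ref{2016.01.31.eq2}), establishing at once both $F(f^*(\Gamma',i))=(F(f))^*(F(\Gamma'),i)$ and $F(q(f,\Gamma',i))=q(F(f),F(\Gamma'),i)$. The case $i=0$ holds because $f^*(\Gamma',0)=dom(f)$ and $q(f,\Gamma',0)=f$. For the step I would abbreviate $g=q(f,ft(\Gamma'),i)$, so that by (\ref{2016.01.31.eq2}) the data $f^*(\Gamma',i+1)=g^*(\Gamma')$ and $q(f,\Gamma',i+1)=q(g,\Gamma')$ are single-step operations; the single-step axioms give $F(g^*(\Gamma'))=(F(g))^*(F(\Gamma'))$ and $F(q(g,\Gamma'))=q(F(g),F(\Gamma'))$, while the inductive hypothesis (the index-$i$ case applied to $ft(\Gamma')$) together with $F(ft(\Gamma'))=ft(F(\Gamma'))$ identifies $F(g)=q(F(f),ft(F(\Gamma')),i)$, after which re-reading (\ref{2016.01.31.eq2}) for the pair $(F(f),F(\Gamma'))$ completes the step. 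Because the recursion feeds an iterated $q$ into a single-step star, carrying the $q$-statement inside the induction is essential, and this bookkeeping is the main (though purely formal) obstacle in the whole lemma.

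With item 3 in hand, item 4 follows from the uniqueness clause of Lemma \ref{2015.08.23.l1a}. I would apply $F$ to the two defining equations (\ref{2015.08.23.eq1}) and (\ref{2015.08.23.eq2}) of $f^*(a)$. Using item 3 to rewrite $F(q(f,\Gamma'))=q(F(f),F(\Gamma'))$ and $F(q(f,\Gamma''))=q(F(f),F(\Gamma''))$ turns (\ref{2015.08.23.eq1}) into the square equation characterizing $(F(f))^*(F(a))$; using item 1 on $p_{f^*(\Gamma''),\Gamma}$ and $p_{f^*(\Gamma'),\Gamma}$, together with length preservation to match the iteration indices, turns (\ref{2015.08.23.eq2}) into the statement that $F(f^*(a))$ is a morphism over $F(\Gamma)$. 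By the uniqueness in Lemma \ref{2015.08.23.l1a} applied to $F(f)$, $F(a)$, the morphism $F(f^*(a))$ must therefore coincide with $(F(f))^*(F(a))$.

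Item 5 is the same uniqueness argument applied to the canonical section $\delta(\Gamma)$, characterized (for $l(\Gamma)>0$) by $\delta(\Gamma)\circ q(p_\Gamma,\Gamma)=Id_\Gamma$ and $\delta(\Gamma)\circ p_{p_\Gamma^*(\Gamma)}=Id_\Gamma$. Applying $F$ and using the single-step case of item 3 with $F(p_\Gamma)=p_{F(\Gamma)}$ rewrites $F(q(p_\Gamma,\Gamma))=q(p_{F(\Gamma)},F(\Gamma))$, and using that $F$ preserves the projection of the object $p_\Gamma^*(\Gamma)$ (identified via item 3 as $p_{F(\Gamma)}^*(F(\Gamma))$) rewrites $F(p_{p_\Gamma^*(\Gamma)})=p_{p_{F(\Gamma)}^*(F(\Gamma))}$; hence $F(\delta(\Gamma))$ satisfies the two equations defining $\delta(F(\Gamma))$, and equality follows by the uniqueness of the diagonal section. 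Note the target matches, since item 3 gives $F(p_\Gamma^*(\Gamma))=p_{F(\Gamma)}^*(F(\Gamma))$.
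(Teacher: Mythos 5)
Your proposal is correct. Note, however, that the paper itself gives no argument here: its ``proof'' consists of the single sentence that the proofs are straightforward and are left for the formalized version. So there is nothing to compare against line by line; what you have written is precisely the filling-in that the paper defers. Your decomposition is the natural one and it works: items 1--2 are immediate inductions from the single-step homomorphism axioms ($F$ commutes with $ft$, $p$, and preserves lengths); the key move is in item 3, where you strengthen the induction to carry the statement $F(q(f,\Gamma',i))=q(F(f),F(\Gamma'),i)$ alongside the statement about $f^*(\Gamma',i)$ --- this is indeed essential, both because the recursion (\ref{2016.01.31.eq2}) feeds the iterated $q$-morphism into a single-step pullback, and because the $q$-statement is what items 4 and 5 actually consume; and items 4--5 then follow from uniqueness of morphisms into the canonical pullback squares, via Lemma \ref{2015.08.23.l1a} and the characterization of $\delta(\Gamma)=s_{Id_\Gamma}$ by its two composites. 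The only steps you leave implicit are routine: that $F(a)$ is a morphism over $F(\Delta)$ (needed to invoke the uniqueness clause of Lemma \ref{2015.08.23.l1a} for $F(f)$, $F(a)$), which is a one-line consequence of item 1, and the matching of iteration indices via length preservation, which you do mention. As written, your argument would serve as a correct proof of the lemma.
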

\begin{proof}
The proofs are straightforward and we leave them for the formalized version of the paper.
\end{proof}

\subsection{The presheaf extension of a C-system}
\llabel{Fext}
Let $CC$ be a C-system and $F:CC^{op}\sr Sets$ a presheaf on the category underlying $CC$. In this section we construct a new C-system $CC[F]$ which we call the $F$-extension of $CC$ and describe a unital pre-B-system $B(CC,F)$ and an isomorphism $B(CC[F])\sr B(CC,F)$. 

We will first construct a C0-system $CC[F]$ and then show that it is a C-system. For the definition of a C0-system see \cite[Definition 2.1]{Csubsystems}.
\begin{problem}\llabel{2016.01.19.prob1}
Given a C-system $CC$ and a presheaf $F:CC^{op}\sr Sets$ to construct a C0-system that will be denoted $CC[F]$ and called the $F$-extension of $CC$.
\end{problem}
\begin{construction}\rm\llabel{2016.01.19.constr1}
We set 
\begin{eq}\llabel{2016.01.19.eq1}
Ob(CC[F])=\amalg_{X\in CC} F(ft^{l(X)}(X))\times\dots\times F(ft^2(X))\times F(ft(X))
\end{eq}
where the product of the empty sequence of factors is a 1-point set. We will write elements of $Ob(CC[F])$ as $(X,\Gamma)$ where $X\in CC$ and $\Gamma=(T_0,\dots,T_{l(X)-1})$. Note that $ft^{l(X)}(X)=pt$ for any $X$ and therefore all the products in (\ref{2016.01.19.eq1}) start with $F(pt)$.

We set
$$Mor(CC[F])=\amalg_{(X,\Gamma),(Y,\Gamma')}Mor_{CC}(X,Y)$$
We will write elements of $Mor(CC[F])$ as $((X,\Gamma),(Y,\Gamma'),f)$. When the domain and the codomain of a morphism are clear from the context we may write $f$ instead of $((X,\Gamma),(Y,\Gamma'),f)$. 

We define the composition function by the rule
$$((X,\Gamma),(Y,\Gamma'),f))\circ ((Y,\Gamma'),(Z,\Gamma''),g)=((X,\Gamma),(Z,\Gamma''),f\circ g)$$

We define the identity morphisms by the rule
$$Id_{CC[F],(X,\Gamma)}=((X,\Gamma),(X,\Gamma),Id_{CC,X})$$

The associativity and the identity conditions of a category follow easily from the corresponding properties of  $CC$. This completes the construction of a category  $CC[F]$. 

We define the length function as
$$l((X,\Gamma))=l(X)$$

If $l((X,\Gamma))=0$ then $X=pt$ and $\Gamma=()$ where $()$ is the unique element of the one point set that is the product of the empty sequence. We will often write $(pt,())$ as $pt$. 

We define the ft-function on $(X,\Gamma)$ such that $l(X)>0$ as 
$$ft((X,(T_0,\dots,T_{l(X)-1}))=(ft(X),(T_0,\dots,T_{l(X)-2}))$$
which is well defined because $l(ft(X))=l(X)-1$, and set $ft((pt,()))=(pt,())$. We will write $ft(\Gamma)$ for $(T_0,\dots,T_{l(X)-2})$ so that $ft((X,\Gamma))=(ft(X),ft(\Gamma))$. 

We define the p-morphisms as 
$$p_{(X,\Gamma)}=((X,\Gamma),ft(X,\Gamma), p_X)$$

For $(Y,\Gamma')$ such that $l((Y,\Gamma'))>0$ and $f:(X,\Gamma)\sr ft(Y,\Gamma')$ where $\Gamma=(T_0,\dots,T_{l(X)-1})$ and $\Gamma'=(T_0',\dots,T_{l(Y)-1}')$ we set
\begin{eq}\llabel{2016.01.31.eq1}
f^*((Y,\Gamma'))=(f^*(Y),(T_0,\dots,T_{l(X)-1},F(q(f,Y))(T'_{l(Y)-1}))).
\end{eq}

In the same context as above we define the q-morphism as
$$q(f,(Y,\Gamma'))=(f^*((Y,\Gamma')),(Y,\Gamma'),q(f,Y))$$

This completes the construction of the elements of the structure of a C0-system. Let us verify that these elements satisfy the axioms of a C0-system. 

The uniqueness of an object of length $0$ is obvious.

The condition that $l(ft(X,\Gamma))=l((X,\Gamma))-1$ if $l((X,\Gamma))>0$ is obvious.

The condition that $ft((pt,()))=(pt,())$ is obvious. 

The fact that $pt$ is a final object in $CC[F]$ follows from the fact that $pt$ is a final object of $CC$.

The fact that for $(Y,\Gamma')$ such that $l((Y,\Gamma'))>0$ and $f:(X,\Gamma)\sr ft(Y,\Gamma')$ one has $q(f,(Y,\Gamma'))\circ p_{(Y,\Gamma')}=p_{f^*((Y,\Gamma'))}\circ f$ follows from the corresponding fact in $CC$.

The fact that for $(Y,\Gamma')$ such that $l((Y,\Gamma'))>0$ one has $Id_{ft(Y,\Gamma)}^*((Y,\Gamma'))=(Y,\Gamma')$ follows from the corresponding fact for $CC$ and the identity axiom of the functor $F$. 

The fact that for $(Y,\Gamma')$ such that $l((Y,\Gamma'))>0$ one has $q(Id_{(Y,\Gamma)},(Y,\Gamma))=Id_{(Y,\Gamma)}$ follows from the previous assertion and the corresponding fact in $CC$.

The fact that $(Y,\Gamma')$ such that $l((Y,\Gamma'))>0$, $f:(X,\Gamma)\sr ft(Y,\Gamma')$ and $g:(W,\Delta)\sr (X,\Gamma)$ one has $g^*(f^*((Y,\Gamma')))=(g\circ f)^*((Y,\Gamma'))$ follows from the composition axiom for the functor $F$ and the corresponding fact for $CC$.

The fact that in the same context as in the previous assertion one has
$$q(g,f^*((Y,\Gamma')))\circ q(f,(Y,\Gamma'))=q((g\circ f),(Y,\Gamma'))$$
follows from the previous assertion and the corresponding fact for $CC$. 

This completes Construction \ref{2016.01.19.constr1}
\end{construction}
\begin{lemma}
\llabel{2016.01.19.l2}
The functions $Ob(CC[F])\sr Ob(F)$ and $Mor(CC(F))\sr Mor(CC)$ given by 
$$(X,\Gamma)\mapsto X$$
and
$$((X,\Gamma),(Y,\Gamma'),f)\mapsto f$$
form a functor $tr_F:CC[F]\sr CC$ and this functor is fully faithful.
\end{lemma}
\begin{proof}
Straightforward from the construction.
\end{proof}
\begin{lemma}\llabel{2016.01.19.l1}
The C0-system of Construction \ref{2016.01.19.constr1} is a C-system.
\end{lemma}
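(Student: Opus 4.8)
By the definition of a C-system in \cite{Csubsystems} (a C0-system in which all canonical squares are pull-back squares), and since Construction \ref{2016.01.19.constr1} already equips $CC[F]$ with the structure of a C0-system, the plan is to verify the single remaining condition: for every $(Y,\Gamma')$ with $l((Y,\Gamma'))>0$ and every $f:(X,\Gamma)\sr ft((Y,\Gamma'))$, the square

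$$
\begin{CD}
f^*((Y,\Gamma')) @>q(f,(Y,\Gamma'))>> (Y,\Gamma')\\
@Vp_{f^*((Y,\Gamma'))}VV @VVp_{(Y,\Gamma')}V\\
(X,\Gamma) @>f>> ft((Y,\Gamma'))
\end{CD}
$$

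is a pull-back in $CC[F]$. Its commutativity is not at issue, having already been recorded at the end of the Construction.

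The key tool will be the functor $tr_F:CC[F]\sr CC$ of Lemma \ref{2016.01.19.l2}, which is fully faithful. Reading off the formulas of Construction \ref{2016.01.19.constr1} for $f^*$, $q$, $ft$ and the p-morphisms, one checks that $tr_F$ carries the square above onto

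$$
\begin{CD}
f^*(Y) @>q(f,Y)>> Y\\
@Vp_{f^*(Y)}VV @VVp_YV\\
X @>f>> ft(Y)
\end{CD}
$$

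which is precisely the canonical square of $CC$ attached to $Y$ and $f$, and hence a pull-back because $CC$ is a C-system.

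It then remains to transport the universal property back along $tr_F$, for which I would invoke the general fact that a fully faithful functor reflects the pull-back property of any commutative square whose image is a pull-back. Concretely, for each object $(W,\Delta)$ of $CC[F]$ full faithfulness provides bijections $Mor_{CC[F]}((W,\Delta),(A,\Theta))\cong Mor_{CC}(W,A)$ natural in the target, so that the comparison map from $Mor_{CC[F]}((W,\Delta),f^*((Y,\Gamma')))$ to the set of compatible pairs of morphisms into $(X,\Gamma)$ and $(Y,\Gamma')$ is identified with the corresponding comparison map in $CC$; the latter is a bijection since the image square is a pull-back, hence so is the former, which is exactly the desired universal property. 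I expect no genuine obstacle here: the whole argument is this reflection principle applied to a cospan, and the only point requiring mild care is confirming that $tr_F$ really sends the $CC[F]$-square onto the canonical $CC$-square, which is immediate from the defining formula (\ref{2016.01.31.eq1}) together with the definitions of $q$, $p$ and $ft$ in $CC[F]$.
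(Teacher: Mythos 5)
Your proposal is correct and follows essentially the same route as the paper: reduce (via \cite[Proposition 2.4]{Csubsystems}) to showing the canonical squares of $CC[F]$ are pull-backs, observe that the fully faithful functor $tr_F$ of Lemma \ref{2016.01.19.l2} sends them to canonical squares of $CC$, and conclude by reflection of the pull-back property along a fully faithful functor. The paper's proof is just a terser version of yours, leaving the reflection argument you spell out implicit.
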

\begin{proof}
By \cite[Proposition 2.4]{Csubsystems} it is sufficient to prove that the canonical squares of $CC[F]$, i.e., the squares formed by morphisms $q(f,(Y,\Gamma')),p_{(Y,\Gamma')}$ and $p_{f^*((Y,\Gamma'))}, f$ are pull-back squares. The functor of Lemma \ref{2016.01.19.l2} map these square to canonical squares of the C-system $CC$ that are pull-back squares. Since this functor is fully faithful we conclude that the canonical squares in $CC[F]$ are pull-back squares. The lemma is proved.
\end{proof}

This completes the construction of the presheaf extension of a C-system. 
\begin{remark}\rm
\llabel{2015.09.01.rem1}
For any two objects of $C[F]$ of the form $(X,\Gamma),(X,\Gamma')$ the formula
$$can_{X,\Gamma,\Gamma'}=((X,\Gamma),(X,\Gamma'),Id_X)$$
defines a morphism which is clearly an isomorphism with $can_{X,\Gamma',\Gamma}$ being a canonical inverse. Therefore, all objects of $CC[F]$ with the same image in $CC$ are ``canonically isomorphic''. 
\end{remark}
\begin{remark}\rm
\llabel{2015.09.01.rem2}
If $F(pt)=\emptyset$ then $CC[F]=\{pt\}$. On the other hand, the choice of an element $y$ in $F(pt)$ defines distinguished elements $y_X=F(\pi_X)(y)$ in all sets $F(X)$ and therefore distinguished objects $(X,\Gamma_{X,y})=(X,(y,\dots,y_{ft(X)},y_X))$ in the fibers of the object component of $tr_{F}$ over all $X$. 

Mapping $X$ to $(X,\Gamma_{X,y})$ and $f:X\sr Y$ to $((X,\Gamma_{X,y}),(Y,\Gamma_{Y,y}),f)$ defines, as one can immediately prove from the definitions, a functor $tr_{F,y}^!:CC\sr CC[F]$. 

This functor clearly satisfies the conditions $tr^!_{F,y}\circ tr_F=Id_{CC}$.

One verifies easily that the morphisms 
$$can_{X,\Gamma,\Gamma_{(X,y)}}:(X,\Gamma)\sr tr^!_{F,y}(X,\Gamma)$$
form a natural transformation. We conclude that $tr_F$ and $tr^!_{F,y}$ is a pair of mutually inverse equivalences of categories.

However this equivalence is not an isomorphism unless $F(X)\cong unit$ for all $X$ and as a C-system $CC[F]$ is often very different from $CC$, for example, in that that it may have many more C-subsystems.  
\end{remark}
We provide the following lemma without a proof because the proof is immediate from the definitions and \cite[Lemma 3.4]{Cfromauniverse} that asserts that a functor that satisfies all conditions of the definition of a homomorphism except possibly the s-morphisms condition is a homomorphism. 
\begin{lemma}
\llabel{2015.08.22.l4}
The functor $tr:CC[F]\sr CC$ is a homomorphism of C-systems.
\end{lemma}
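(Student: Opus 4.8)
The plan is to appeal to the reduction provided by \cite[Lemma 3.4]{Cfromauniverse}: since that result guarantees that a functor meeting every requirement of a homomorphism of C-systems \emph{except} possibly the s-morphism (equivalently, the $q$-morphism) condition is already a homomorphism, it suffices to check the remaining, purely structural conditions. I would first record that $tr$ is a genuine functor by Lemma \ref{2016.01.19.l2}, so that only the compatibilities with the length function, the final object, the $ft$-operation and the $p$-morphisms remain to be verified, each of which can be read off directly from Construction \ref{2016.01.19.constr1}.

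First I would treat the length condition. By the definition of the length function in Construction \ref{2016.01.19.constr1} one has $l((X,\Gamma))=l(X)=l(tr(X,\Gamma))$, so $tr$ preserves lengths on the nose. For the final object, the unique object of length $0$ in $CC[F]$ is $(pt,())$, and $tr(pt,())=pt$ is the final object of $CC$; hence $tr$ sends the point to the point.

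Next I would check compatibility with $ft$ and with the $p$-morphisms. The $ft$-operation in $CC[F]$ is defined by $ft((X,\Gamma))=(ft(X),ft(\Gamma))$, so $tr(ft((X,\Gamma)))=ft(X)=ft(tr(X,\Gamma))$. Likewise $p_{(X,\Gamma)}$ is by definition the morphism $((X,\Gamma),ft(X,\Gamma),p_X)$, and applying $tr$ returns $p_X=p_{tr(X,\Gamma)}$; thus $tr$ carries $p$-morphisms to $p$-morphisms. Having verified all of the conditions other than the $q$-morphism one, I would conclude by \cite[Lemma 3.4]{Cfromauniverse} that $tr$ is a homomorphism.

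The only step that would require genuine work, were one to carry it out by hand, is precisely the $q$-morphism condition: one would have to compare $tr(q(f,(Y,\Gamma')))$ with $q(tr(f),tr(Y,\Gamma'))$ and check the matching identity on the cut-back objects $f^*((Y,\Gamma'))$ supplied by (\ref{2016.01.31.eq1}). This is exactly the computation the cited lemma lets us bypass; it would in fact follow immediately from the definitions, since the object component $f^*(Y)$ and the morphism $q(f,Y)$ are left unchanged by $tr$, but since the reduction lemma renders it unnecessary I would not perform it.
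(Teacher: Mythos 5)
Your route is the same as the paper's (the paper proves this lemma by exactly the reduction you cite), but there is a genuine error in how you deploy that reduction: you identify the s-morphism condition with the $q$-morphism condition, and these are not the same thing. In this setting the s-morphisms are the sections $s_f$ attached to morphisms $f$ by the C-system structure (\cite[Definition 2.3]{Csubsystems}; they are computed for $C(\RR)$ and $C(\RR,\LM)$ in Lemmas \ref{2015.09.09.l1} and \ref{2015.09.09.l2}), and it is the condition $tr(s_f)=s_{tr(f)}$ that \cite[Lemma 3.4]{Cfromauniverse} allows one to skip. The $q$-morphisms $q(f,(Y,\Gamma'))$, together with the objects $f^*((Y,\Gamma'))$, are part of the canonical-square data, and their preservation is one of the hypotheses of the definition of a homomorphism that the cited lemma does \emph{not} waive. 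Consequently your inference ``having verified all of the conditions other than the $q$-morphism one, I conclude by \cite[Lemma 3.4]{Cfromauniverse}'' is invalid as stated: you would be invoking the lemma for a functor that has not been shown to satisfy one of the lemma's hypotheses.

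The gap is small and you in fact close it yourself in your final paragraph, only to then set it aside as unnecessary. By Construction \ref{2016.01.19.constr1} one has $q(f,(Y,\Gamma'))=(f^*((Y,\Gamma')),(Y,\Gamma'),q(f,Y))$ and $f^*((Y,\Gamma'))=(f^*(Y),\dots)$, so $tr(f^*((Y,\Gamma')))=f^*(Y)=tr(f)^*(tr((Y,\Gamma')))$ and $tr(q(f,(Y,\Gamma')))=q(f,Y)=q(tr(f),tr((Y,\Gamma')))$; that is, compatibility with $f^*$ on objects and with the $q$-morphisms holds on the nose. Promote this observation from an optional aside to a required step of the verification, and your argument then coincides with the paper's: every condition in the definition of a homomorphism except the s-morphism one is immediate from the construction, and the s-morphism condition is exactly what \cite[Lemma 3.4]{Cfromauniverse} supplies.
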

\begin{remark}\rm
\llabel{2015.08.22.rem1} 
Let $y\in F(pt)$. Then for $f:X\sr Y$ one has $F(f)(y_{Y})=y_X$ and therefore for $f:X\sr ft(Y)$ one has
$$(tr^!_{y}(f))^*(Y)=(f^*(Y),\Gamma_{f^*(Y),y})=f^*((Y,\Gamma_Y))=tr^!_y(f)^*(tr_y(Y))$$
The rest of the conditions that one needs to prove in order to show that $tr_y$ is a homomorphism of C-systems is immediate from definitions and we obtain that 
$$tr^!_y:CC\sr CC[F]$$
is a homomorphism of C-systems.  
\end{remark}

Recall that by definition $(X,\Gamma)\le (Y,\Gamma')$ if and only if $l(X,\Gamma)\le l(Y,\Gamma')$ and 
$$(X,\Gamma)=ft^{l(Y,\Gamma')-l(X,\Gamma)}(Y,\Gamma').$$
From construction we conclude that $(X,\Gamma)\le (Y,\Gamma')$ if and only if $X\le Y$ in $CC$ and 
$$(X,\Gamma)=ft^{l(Y)-l(X)}((Y,\Gamma')).$$ 
\begin{lemma}
\llabel{2016.01.31.l1}
Let $i\ge 0$, $(Y,\Gamma')$ be such that $l(Y)\ge i$. Let $f:(X,\Gamma)\sr ft^i(Y,\Gamma')$. Let $lx=l(X)$, $ly=l(Y)$ and 
$$\Gamma=(T_0,\dots,T_{lx-1})$$
$$\Gamma'=(T'_0,\dots,T'_{ly-1})$$
Then
$$f^*((Y,\Gamma'),i)=(f^*(Y,i),(T_0,\dots,T_{lx-1},F(q(f,ft^i(Y),0))(T'_{ly-i}),\dots,F(q(f,ft(Y),i-1))(T'_{ly-1}))$$
\end{lemma}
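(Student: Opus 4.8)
The plan is to prove the formula by induction on $i$, peeling one level off the tower at each step by means of the recursion (\ref{2016.01.31.eq2}) and reducing to the single-step pull-back formula (\ref{2016.01.31.eq1}). The base case $i=0$ is immediate: the recursion gives $f^*((Y,\Gamma'),0)=(X,\Gamma)$ and $f^*(Y,0)=X$, while the list of ``new'' components on the right-hand side is empty, so both sides equal $(X,(T_0,\dots,T_{lx-1}))$.

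For the inductive step I would assume the statement for $i$, for all choices of data, and prove it for $i+1$ (so now $l(Y)\ge i+1$). Unfolding (\ref{2016.01.31.eq2}) in $CC[F]$ gives $f^*((Y,\Gamma'),i+1)=g^*((Y,\Gamma'))$ with $g:=q(f,ft(Y,\Gamma'),i)$, so that $f^*((Y,\Gamma'),i+1)$ is a single-step pull-back of $(Y,\Gamma')$ along $g$. Writing $ft(Y,\Gamma')=(ft(Y),(T'_0,\dots,T'_{ly-2}))$ and noting $l(ft(Y))=ly-1\ge i$, I would apply the inductive hypothesis to $ft(Y,\Gamma')$ (with the same $f$ and $i$) to compute the domain $f^*(ft(Y,\Gamma'),i)$ of $g$: its $CC$-component is $f^*(ft(Y),i)$ and its $\Gamma$-component is $(T_0,\dots,T_{lx-1})$ followed by the $i$ terms $F(q(f,ft^{\,i+1-k}(Y),k))(T'_{ly-1-i+k})$ for $k=0,\dots,i-1$. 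Since $ly-1-i+k=ly-(i+1)+k$, these are exactly the first $i$ of the $i+1$ new components claimed for $f^*((Y,\Gamma'),i+1)$.

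It then remains to feed $g$ into (\ref{2016.01.31.eq1}), and the crucial point is to identify the underlying $CC$-morphism of $g$. Since $tr_F:CC[F]\sr CC$ is a homomorphism of C-systems (Lemma \ref{2015.08.22.l4}) it carries $q$-morphisms to $q$-morphisms and hence commutes with their iterates (in the spirit of Lemma \ref{2015.09.03.l2}); therefore the underlying morphism of $g=q(f,ft(Y,\Gamma'),i)$ is $q(f,ft(Y),i):f^*(ft(Y),i)\sr ft(Y)$. Consequently the $CC$-component of $g^*((Y,\Gamma'))$ is $q(f,ft(Y),i)^*(Y)$, which by the recursion (\ref{2016.01.31.eq2}) read in $CC$ equals $f^*(Y,i+1)$; and the single new top component appended by (\ref{2016.01.31.eq1}) is $T'_{ly-1}$ transported along that underlying morphism, namely $F(q(f,ft(Y),i))(T'_{ly-1})$. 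This is precisely the $k=i$ term $F(q(f,ft^{\,(i+1)-i}(Y),i))(T'_{ly-1})$ of the asserted list, so appending it after the $i$ terms coming from the inductive hypothesis yields the claimed formula for $i+1$.

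I expect the only real difficulty to be the index bookkeeping in the last two paragraphs: one must verify that passing from the tower $ft(Y,\Gamma')$ up to $(Y,\Gamma')$ shifts each inductive-hypothesis index correctly (using $ft^{\,i-k}(ft(Y))=ft^{\,i+1-k}(Y)$ and $T'_{ly-1-i+k}=T'_{ly-(i+1)+k}$) and that each listed component is well typed, i.e. the $k$-th new component lies in $F(ft^{\,(i+1)-k}(f^*(Y,i+1)))=F(f^*(ft^{\,(i+1)-k}(Y),k))$. The last identification uses the standard C-system fact $ft^{\,j}(h^*(Z,n))=h^*(ft^{\,j}(Z),n-j)$, which is what makes the $F$-images land in the sets prescribed by (\ref{2016.01.19.eq1}). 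None of these checks is deep, but they must be carried out carefully because the recursion removes levels from the top of the tower whereas the displayed formula is indexed from the bottom.
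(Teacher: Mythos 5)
Your proposal is correct and follows essentially the same route as the paper's proof: induction on $i$, with the successor step obtained by unfolding the recursion (\ref{2016.01.31.eq2}), applying the inductive hypothesis to $ft(Y,\Gamma')$ to get the domain of $q(f,ft(Y,\Gamma'),i)$, and then applying the one-step formula (\ref{2016.01.31.eq1}). The only difference is presentational: you explicitly justify that the underlying $CC$-morphism of $q(f,ft(Y,\Gamma'),i)$ is $q(f,ft(Y),i)$ via the homomorphism $tr_F$, a fact the paper uses implicitly as immediate from Construction \ref{2016.01.19.constr1}.
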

\begin{proof}
By induction on $i$.

For $i=0$ we have 
$$f^*((Y,\Gamma'),0)=(X,\Gamma)=(f^*(Y,0),(T_0,\dots,T_{lx-1}))$$
For the successor of $i$ we need to show that
\begin{eq}\llabel{2016.01.31.eq3}
f^*((Y,\Gamma'),i+1)=$$$$(f^*(Y,i+1),(T_0,\dots,T_{lx-1},F(q(f,ft^{i+1}(Y),0))(T'_{ly-i-1}),\dots,F(q(f,ft(Y),i))(T'_{ly-1}))
\end{eq}
We have by (\ref{2016.01.31.eq2}),  
$$f^*((Y,\Gamma'),i+1)=q(f,ft((Y,\Gamma')),i)^*((Y,\Gamma'))$$
By the inductive assumption, $q(f,ft((Y,\Gamma')),i)$ is a morphism with the domain
$$f^*(ft(Y,\Gamma'),i)=$$$$(f^*(ft(Y),i),(T_0,\dots,T_{lx-1},F(q(f,ft^{i}(ft(Y)),0))(T'_{ly-1-i}),\dots,F(q(f,ft(ft(Y))),i-1)(T'_{ly-2})))$$
By (\ref{2016.01.31.eq1}) we get
$$q(f,ft((Y,\Gamma')),i)^*((Y,\Gamma'))=$$
$$(q(f,ft(Y),i)^*(Y),$$
$$(T_0,\dots,T_{lx-1},F(q(f,ft^{i}(ft(Y)),0))(T'_{ly-1-i}),\dots,F(q(f,ft(ft(Y))),i-1)(T'_{ly-2}), $$$$F(q(f,ft(Y),i))(T'_{ly-1})))$$
which coincides with our goal (\ref{2016.01.31.eq3}). 
\end{proof}
%
%##

\comment{
\begin{lemma}
\llabel{2015.08.26.l7}
Let $(Y,\Gamma')\ge (W,\Delta)$ where $\Gamma'=(T'_0,\dots,T'_{l(Y)-1})$ and let $f:(X,\Gamma)\sr (W,\Delta)$ be a morphism where $\Gamma=(T_0,\dots,T_{l(X)-1})$. Then one has
$$f^*((Y,\Gamma'))=(f^*(Y),(T_0,\dots,T_{l(X)-1},F(q(f,ft^{n'-n}(Y)))(T'_n),\dots,F(q(f,Y))(T'_{n'-1})))$$
\end{lemma}
\begin{proof}
Let $n=l(W)$ and let $i=n'-n$. Then $(W,\Delta)=ft^i((Y,\Gamma'))$, i.e., $W=ft^i(Y)$, $\Delta=(T'_0,\dots,T'_{n-1})$ and $\Gamma'=(T'_0,\dots,T'_{n-1+i})$. By definition $f^*((Y,\Gamma'))=f^*((Y,\Gamma'),i)$. 

The proof is by induction on $i$ after replacing $n'$ with $n+i$. 

For $i=0$ we have $f^*((Y,\Gamma'),0)=(X,\Gamma)$.

For the successor $i+1$ we have, by (\ref{2016.01.31.eq2}),  
$$f^*((Y,\Gamma'),i+1)=q(f,ft((Y,\Gamma')),i)^*((Y,\Gamma'))$$
where, by the inductive assumption, $q(f,ft((Y,\Gamma')),i)$ is a morphism with the domain
$$f^*(ft(Y,\Gamma'),i)=(f^*(ft(Y)),(T_0,\dots,T_{m-1},F(q(f,ft^{i+1}(Y)))(T'_n),\dots,F(q(f,ft(Y)))(T'_{n-1+i})))$$
and by (\ref{2016.01.31.eq1}) we have
$$q(f,ft((Y,\Gamma')),i)^*((Y,\Gamma'))=$$$$(q(f,ft(Y),i)^*(Y),(T_0,\dots,T_{m-1},F(q(f,ft^{i+1}(Y)))(T'_n),\dots,F(q(f,ft(Y)))(T'_{n-2+i}), F(q(q(f,ft((Y)),i),Y))(T'_{n+i}))=$$
$$(q(f,ft(Y),i)^*(Y),(T_0,\dots,T_{m-1},F(q(f,ft^{i+1}(Y)))(T'_n),\dots,F(q(f,ft(Y)))(T'_{n-2+i}), F(q(f,Y,i),i))(T'_{n+i}))$$
\end{proof}
}

\subsection{Some computations with $Jf$-relative monads}
\llabel{Jfrel}

The notion of a relative monad is introduced in \cite[Def.1, p. 299]{ACU} and considered in more detail in \cite{ACU2}. Let us remind it here.
\begin{definition}
\llabel{2015.12.22.def1}
Let $J:C\sr D$ be a functor. A relative monad $\RR$ on $J$ or a $J$-relative monad is a collection of data of the form
\begin{enumerate}
\item a function $RR:Ob(C)\sr Ob(D)$,
\item for each $X$ in $C$ a morphism $\eta(X):J(X)\sr RR(X)$,
\item for each  $X,Y$ in $C$ and $f:J(X)\sr RR(Y)$ a morphism $\mbind(f):RR(X)\sr RR(Y)$,
\end{enumerate}
such that the following conditions hold:
\begin{enumerate}
\item for any $X\in C$, $\mbind(\eta(X))=Id_{RR(X)}$,
\item for any $f:J(X)\sr RR(Y)$, $\eta(X)\circ \mbind(f)=f$,
\item for any $f:J(X)\sr RR(Y)$, $g:J(Y)\sr RR(Z)$, 
$$\mbind(f)\circ \mbind(g)=\mbind(f\circ \mbind(g))$$
\end{enumerate}
\end{definition}
\begin{problem}\llabel{2016.01.15.prob1}
Given a relative monad $\RR$ to construct a functor $(RR_{Ob},RR_{Mor})$ from $C$ to $D$ such that $RR_{Ob}=RR$.
\end{problem}
\begin{construction}\rm\llabel{2016.01.15.constr1}
For $f:X\sr Y$ in $C$ set
$$RR_{Mor}(f)=\mbind(J(f)\circ \eta(Y))$$
The proof of the composition and the identity axioms of a functor are easy.
\end{construction}
For two sets $X$ and $Y$ we let $Fun(X,Y)$ to denote the set of functions from $X$ to $Y$. 

Next, following \cite{FPT} we let $F$ denote the category with the set of objects $\nat$ and the set of morphisms from $m$ to $n$ being $Fun(stn(m),stn(n))$, where $stn(m)=\{i\in\nat\,|\,i<m\}$ is our choice for the standard set with $m$ elements (cf. \cite{LandC}) and where for two sets $X$ and $Y$, 

For any set $U$ there is a category $Sets(U)$ of the following form. The set of objects of $Sets(U)$ is $U$. The set of morphisms is
$$Mor(Sets(U))=\cup_{X,Y\in U}Fun(X,Y)$$
Since a function from $X$ to $Y$ is defined as a triple $(X,Y,G)$ where $G$ is the graph subset of this function the domain and codomain functions are well defined on $Mor(Sets(U))$ such that
$$Mor_{Sets(U)}(X,Y)=Fun(X,Y)$$
and a composition function can be defined that restricts to the composition of functions function on each $Mor_{Sets(U)}(X,Y)$. Finally the identity function $U\sr Mor(Sets(U))$ is obvious and the collection of data that one obtains satisfies the axioms of a category. This category is called the category of sets in $U$ and denoted $Sets(U)$. 

We will only consider the case when $U$ is a universe. As was mentioned in the introduction we fix $U$ and omit it from our notations below. 

Following \cite{ACU} we let $Jf:F\sr Sets$ denote the functor that takes $n$ to $stn(n)$ and that is the identity on morphisms between two objects (on the total sets of morphisms the morphism component of this functor is the inclusion of a subset). 

As the following construction shows any monad on sets defines a $Jf$-relative monad. Combined with our construction of $C(\RR)$ this gives a construction of a C-system for any monad on sets. 

\begin{problem}\llabel{2016.01.13.prob1}
Given a monad ${\bf R}=(R,\eta,\mu)$ (cf. \cite{MacLane}[p. 133]) on the category of sets to construct a $Jf$-relative monad $\RR$.
\end{problem}
\begin{construction}\rm\llabel{2016.01.13.constr1}
We set 
\begin{enumerate}
\item $R(n)=R(stn(n))$,
\item $\eta_n=\eta_{stn(n)}$,
\item for $f:stn(m)\sr R(n)$ we set $\mbind(f)=R(f)\circ \mu_{stn(n)}$.
\end{enumerate}
The verification of the relative monad axioms is easy.
\end{construction}
\begin{remark}\rm\llabel{2016.01.03.rem1}
It seems to be possible to provide a construction of a monad from a $Jf$-relative monad without the use of the axioms of choice and excluded middle. This construction will be considered in a separate note.
\end{remark}
\begin{remark}\rm\llabel{2016.01.17.rem1}
The set of $Jf$-relative monads is in an easy to construct bijection with the set of abstract clones as defined in \cite[Section 3]{FPT}. 
\end{remark}

In \cite{LandJf} we constructed for any $Jf$-relative monad $\RR=(RR,\eta,\mbind)$ a Lawvere theory $(T,L)=RML(\RR)$. Most of this section is occupied by simple computations in $T$ that will be used in the later sections.

Recall that the category $T$ has as the set of objects the set of natural numbers and as the set of morphisms the set 
$$Mot_T=\amalg_{m,n} Fun(stn(m),RR(n))$$
Therefore the set of morphisms in $T$ from $m$ to $n$ is the set of iterated pairs $((m,n),f)$ where $f\in Fun(stn(m),RR(n))$. We fix the obvious bijection between this set and $Fun(stn(m),RR(n))$ and use the corresponding functions in both directions as coercions. A coercion, in the terminology of the proof assistant Coq, is a function $f:X\sr Y$ such that when an expression denoting an element $x$ of the set $X$ occurs in a position where an element of $Y$ should be it is assumed that $x$ is replaced by $f(x)$.

Let us introduce the following notation:
$$F(m,n)=Fun(stn(m),stn(n))$$
and, for a $Jf$-relative monad $\RR$,
$$RR(m,n)=Fun(stn(m),RR(n))$$
Then for $f\in RR(l,m)$ and $g\in RR(m,n)$ the composition $f\hc g$ in $T$ is defined as $\mbind(f)\circ g$ and for $m\in\nat$ the identity morphism $Id_m$ in $T$ is defined as $\eta(m)$. 

The functor $L:F\sr T$ is defined as the identity on objects and as the function on morphisms corresponding to the functions $f\mapsto f\circ \eta(n)$ from $F(m,n)$ to $RR(m,n)$. 

We also obtain the extension of $RR$ to a functor $F\sr Sets$ according to Construction \ref{2016.01.15.constr1}. For a morphism $f\in F(m,n)$ we have $RR(f)=\mbind(f\circ \eta(n))=\mbind(L(f))$. 

We are going to use the functions $f\mapsto RR(f)$ as coercions so that when an element $f$ of $F(m,n)$ occurs in a position where an element of $Fun(RR(m),RR(n))$ is expected it has to be replaced by $RR(f)$. 

\comment{Similarly, we will use the functions $\mbind(m)$ as a coercions so that when an element $g$ of $RR(m,n)$ occurs in a position where a function from $RR(m)$ to $RR(n)$ is expected it has to be replaced by $\mbind(g)$. }
\begin{remark}\rm
\llabel{2015.11.20.rem4}
We can not replace $\amalg$ by $\cup$ in our definition of the set of morphisms of $T$ because for a general $\RR$ the sets $RR(m,n)$ are not disjoint.  For example, if $RR(m)=pt$ where $pt$ is a fixed one element set then $RR$ has a (unique) structure of a $Jf$-relative monad and $RR(m,n)=RR(m,n')$ for all $m,n,n'$. Therefore no function to $\nat$ from the union of these sets can distinguish the codomain of a morphism. In particular, in this case there is no category with the sets of morphisms from $m$ to $n$ being equal $RR(m,n)$. 
\end{remark}
Since we will have to deal with elements of the sets of functions $Fun(stn(m),RR(n))$ and of similar sets such as the sets $Ob_n(C(\RR,\LM))$ introduced later we need to choose some way to represent them. For the purpose of the present paper we will write such elements as sequences, i.e., to denote the function, which in the notation of $\lambda$-calculus is written as $\lambda\,i:stn(n), f_i$, we will write $(f_0,\dots,f_{n-1})$. In particular, for an element $x$ of a set $X$, the expression $(x)$ denotes the function $stn(1)\sr X$ that takes $0$ to $x$. 
\begin{lemma}
\llabel{2016.01.15.l4}
Let $f=(f(0),\dots,f(l-1))$ be a morphism in $T$ from $l$ to $m$ and $g=(g(0),\dots,g(m-1))$ a morphism from $m$ to $n$. Then one has
$$f\hc g=(\mbind(g)(f(0)),\dots,\mbind(g)(f(l-1)))$$
\end{lemma}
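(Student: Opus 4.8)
The statement is, in essence, a direct unfolding of the definition of composition in $T$ together with the chosen representation of functions as sequences, so the plan is to reduce it to a pointwise identity and then read off the sequence form.

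First I would recall that, under the coercions fixed above, a morphism of $T$ from $l$ to $m$ is exactly an element of $RR(l,m)=Fun(stn(l),RR(m))$, and that the composition $\hc$ sends $f\in RR(l,m)$ and $g\in RR(m,n)$ to the composite of functions $f\circ\mbind(g)$, where the relative monad structure supplies $\mbind(g):RR(m)\sr RR(n)$ and $\circ$ is taken in the diagrammatic order. The only thing to check at this stage is that the types match: $f$ has codomain $RR(m)$, which is the domain of $\mbind(g)$, so that $f\circ\mbind(g):stn(l)\sr RR(n)$ is a well-defined element of $RR(l,n)$, hence a morphism of $T$ from $l$ to $n$, as it should be.

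Next I would evaluate this composite pointwise. For each $i\in stn(l)$, the diagrammatic convention for $\circ$ gives $(f\circ\mbind(g))(i)=\mbind(g)(f(i))$. Since writing $f=(f(0),\dots,f(l-1))$ means precisely that $f$ is the function $i\mapsto f(i)$, the composite $f\circ\mbind(g)$ is the function $i\mapsto\mbind(g)(f(i))$, which in the sequence notation is $(\mbind(g)(f(0)),\dots,\mbind(g)(f(l-1)))$. This is exactly the asserted formula.

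I do not expect any genuine obstacle: the lemma is a computation that is forced by the definitions. The only care required is the bookkeeping of the two coercions in play, namely viewing a morphism of $T$ as an element of $Fun(stn(-),RR(-))$ and the lifting $g\mapsto\mbind(g)$ appearing in the definition of $\hc$, together with the convention that $\circ$ denotes composition in diagrammatic order, so that evaluation at $i$ yields $\mbind(g)(f(i))$ rather than the reverse order.
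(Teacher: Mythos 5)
Your proposal is correct and is essentially identical to the paper's own proof, which simply computes $(f\hc g)(i)=(f\circ\mbind(g))(i)=\mbind(g)(f(i))$ pointwise and reads off the sequence form. Your extra care with the coercions and the type-checking of $f\circ\mbind(g)$ (the reading forced by the diagrammatic convention, and the one the paper itself uses in its proofs) is sound but adds nothing beyond the paper's one-line argument.
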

\begin{proof}
We have
$$(f\hc g)(i)=(f\circ\mbind(g))(i)=\mbind(g)(f(i)).$$
The lemma is proved. 
\end{proof}
\begin{lemma}
\llabel{2015.08.30.l1}
Let $f\in F(l,m)$, $g\in RR(m,n)$ and $i\in stn(l)$. Then one has
\begin{eq}\llabel{2015.08.26.eq4}
(L(f)\hc g)(i)=g(f(i))
\end{eq}
\end{lemma}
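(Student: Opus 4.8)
The plan is to reduce the whole statement to the unit axiom of a relative monad (the second condition in Definition \ref{2015.12.22.def1}), applied to the morphism $g$. Since $L(f)$ is a morphism of $T$ from $l$ to $m$, i.e.\ an element of $RR(l,m)$, and $g\in RR(m,n)$, I would first rewrite the left-hand side using the explicit formula for composition in $T$ from Lemma \ref{2016.01.15.l4}:
$$(L(f)\hc g)(i)=\mbind(g)(L(f)(i)).$$

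Next I would unfold $L(f)$ using its definition as the morphism component of the functor $L:F\sr T$, namely $L(f)=f\circ \eta(m)$, the map $stn(l)\sr RR(m)$ obtained by sending each index through $f$ and then through $\eta(m)$. In the diagrammatic order this gives $L(f)(i)=\eta(m)(f(i))$, where $f(i)\in stn(m)=Jf(m)$. Substituting yields
$$(L(f)\hc g)(i)=\mbind(g)(\eta(m)(f(i))).$$
The key step is then to recognize the inner composite as an instance of the second relative monad axiom. Since $g:Jf(m)\sr RR(n)$, that axiom reads $\eta(m)\circ \mbind(g)=g$, which pointwise says $\mbind(g)(\eta(m)(x))=g(x)$ for every $x\in stn(m)$. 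Applying this with $x=f(i)$ gives $\mbind(g)(\eta(m)(f(i)))=g(f(i))$, which is exactly (\ref{2015.08.26.eq4}).

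I do not expect any genuine obstacle here: the entire content is the unit law of the relative monad, so the proof is a two-line calculation. The only thing to watch is bookkeeping with the diagrammatic composition convention — making sure that $f\circ \eta(m)$ really evaluates to $\eta(m)(f(i))$ and that the axiom $\eta(m)\circ \mbind(g)=g$ is read in the same order — but once the conventions are fixed the identity is immediate.
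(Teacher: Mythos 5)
Your proof is correct and takes essentially the same approach as the paper: both unfold $L(f)=f\circ\eta(m)$ and the Kleisli composition $\hc$, and then conclude by the unit axiom $\eta(m)\circ\mbind(g)=g$ of the relative monad. The only difference is cosmetic --- the paper keeps the whole computation at the level of composites, using associativity of $\circ$ before evaluating at $i$, while you evaluate pointwise from the start; the key step is identical.
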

\begin{proof}
Rewriting the left hand side we get 
$$(L(f)\hc g)(i)=((f\circ \eta(m))\circ \mbind(g))(i)=(f\circ (\eta(m)\circ \mbind(g)))(i)=(f\circ g)(i)=g(f(i)).$$
which completes the proof. 
\end{proof}
For $n\in\nat$ and $i=0,\dots,n-1$ let
$$x_i^n=\eta(n)(i)\in RR(n)$$
Observe also that for $f\in RR(m,n)$ one has
\begin{eq}\llabel{2015.08.24.eq5}
\mbind(f)(x_i^m)=(\eta(m)\circ\mbind(f))(i)=f(i)
\end{eq}
and for $f\in F(m,n)$ one has
\begin{eq}\llabel{2016.01.15.eq1}
f(x_i^m)=RR(f)(\eta(m)(i))=(\eta(m)\circ \mbind(f\circ \eta(n)))(i)=(f\circ \eta(n))(i)=\eta(n)(f(i))=x_{f(i)}^n
\end{eq}
Let 
$$\partial^{i}_{n}:stn(n)\sr stn(n+1)$$
for $0\le i\le n$ be the increasing inclusion that does not take the value $i$ and
$$\sigma^{i}_{n}:stn(n+2)\sr stn(n+1)$$
for $0\le i\le n$ be the non-decreasing surjection that takes the value $i$ twice. Taking into account that, in the notation of \cite{GabZis}, $[n]=stn(n+1)$ these are the standard generators of the simplicial category $\Delta$ together with $\partial^0_0:stn(0)\sr stn(1)$. 

In our sequence notation we have
\begin{eq}\llabel{2015.08.24.eq7}
L(\partial^{i}_{n})=(x_0^{n+1},\dots,x_{i-1}^{n+1},x_{i+1}^{n+1},\dots,x_n^{n+1})
\end{eq}
and
\begin{eq}\llabel{2015.08.24.eq8}
L(\sigma^{i}_{n})=(x_0^{n+1},\dots,x_{i-1}^{n+1},x_i^{n+1},x_i^{n+1},x_{i+1}^{n+1},\dots,x_n^{n+1})
\end{eq}
in particular
\begin{eq}\llabel{2015.07.12.eq5}
L(\partial^{n}_n)=(x_0^{n+1},\dots,x_{n-1}^{n+1})
\end{eq}

Let 
$${\iota}_n^{i}:stn(n)\sr stn(n+i)$$
be the function given by ${\iota}_n^i(j)=j$ for $j=0,\dots,n-1$. Then we have
\begin{eq}
\llabel{2015.08.22.eq7}
{\iota}_n^{1}=\partial_{n}^{n}
\end{eq}
and (\ref{2016.01.15.eq1}) implies that 
\begin{eq}
\llabel{2015.08.22.eq8}
{\iota}_n^i(x^n_j)=x^{n+i}_j
\end{eq}
\begin{lemma}
\llabel{2015.08.26.l1}
Let $f=(f(0),\dots,f(m))$ be a morphism from $m+1$ to $n$ in $T$. Then
\begin{eq}\llabel{2016.01.15.eq3}
L(\iota_m^1)\hc f=(f(0),\dots,f(m-1))
\end{eq}
In particular, if $f\in RR(n+1,n)$ then $L(\iota_n^1)\hc f=Id_{T,n}$ if and only if $f(i)=x_n^i$ for $i=0,\dots,n-1$. 
\end{lemma}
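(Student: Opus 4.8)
The plan is to reduce the identity to a pointwise computation in $RR(n)$ and then invoke Lemma \ref{2015.08.30.l1}. First I would record the types of the morphisms involved: since $\iota_m^1\in F(m,m+1)$ and $f\in RR(m+1,n)$, the composite $L(\iota_m^1)\hc f$ is a morphism from $m$ to $n$ in $T$, that is, an element of $RR(m,n)=Fun(stn(m),RR(n))$. To identify this element it suffices, by the sequence convention, to compute its value at an arbitrary $i\in stn(m)$. Here one must be careful about the name clash: in Lemma \ref{2015.08.30.l1} the symbol $f$ denotes the $F$-morphism and $g$ the $RR$-morphism, whereas in the present statement $f$ is the $RR$-morphism. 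Applying that lemma with its $f$ taken to be $\iota_m^1$ and its $g$ taken to be the present $f$ yields
$$(L(\iota_m^1)\hc f)(i)=f(\iota_m^1(i)).$$

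Since $\iota_m^1(i)=i$ for every $i=0,\dots,m-1$, the right-hand side is simply $f(i)$. Thus $L(\iota_m^1)\hc f$, viewed as a function $stn(m)\sr RR(n)$, sends $i$ to $f(i)$ for each $i\in stn(m)$, which is exactly the sequence $(f(0),\dots,f(m-1))$. This establishes (\ref{2016.01.15.eq3}).

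For the final ``in particular'' clause I would specialize the case $m=n$, so that $f\in RR(n+1,n)$ and $L(\iota_n^1)\hc f=(f(0),\dots,f(n-1))$ by what has just been proved. Recall that $Id_{T,n}=\eta(n)$ and that, by the definition $x_i^n=\eta(n)(i)$, the identity morphism is the sequence $(x_0^n,\dots,x_{n-1}^n)$. Because a morphism from $n$ to $n$ in $T$ is an element of $Fun(stn(n),RR(n))$ and two such functions coincide precisely when they agree at every $i\in stn(n)$, the equality $L(\iota_n^1)\hc f=Id_{T,n}$ holds if and only if $f(i)=x_i^n$ for all $i=0,\dots,n-1$. (I read the statement's $x_n^i$ as a typo for $x_i^n$, which is the value $\eta(n)(i)$ forced by the identity morphism.)

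There is essentially no obstacle in this argument, as it is a direct application of the already-established Lemma \ref{2015.08.30.l1}; the only points requiring attention are the bookkeeping of the two roles of ``$f$'' between the two lemmas and the index convention that the domain of the composite is $stn(m)$ rather than $stn(m+1)$, which is precisely what makes the last value $f(m)$ drop out.
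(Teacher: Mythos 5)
Your proof is correct and follows essentially the same route as the paper's: reduce (\ref{2016.01.15.eq3}) to a pointwise identity in $Fun(stn(m),RR(n))$, apply Lemma \ref{2015.08.30.l1}, and then deduce the second claim from $Id_{T,n}=(x_0^n,\dots,x_{n-1}^n)$. Your reading of the statement's $x_n^i$ as a typo for $x_i^n=\eta(n)(i)$ is also right, and your bookkeeping of the two roles of $f$ is exactly the substitution the paper leaves implicit.
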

\begin{proof}
Both sides of the required equality are elements of $Fun(stn(m),RR(n))$. Therefore, the equality holds if and only if for all $i=0,\dots,n-1$ we have
$(L(\iota_m^1)\hc f)(i)=f(i)$. The assertion of the lemma follows now from Lemma \ref{2015.08.30.l1}.

Since $Id_{T,n}=(x_0^n,\dots,x_{n-1}^n)$ the second assertion immediately follows from the first one.
\end{proof}
For $f\in RR(n,m)$, $f=(f(0),\dots,f(n-1))$ define an element $qq(f)\in RR(n+1,m+1)$ by the formula: 
\begin{eq}\llabel{2015.08.26.eq9}
qq(f)=(\iota_m^1(f(0)),\dots,\iota_m^1(f(n-1)),x_m^{m+1})
\end{eq}
\begin{lemma}
\llabel{2015.08.26.l2}
For $i\in\nat$ and $f=(f(0),\dots,f(n-1))$ in $RR(n,m)$ one has
$$qq^i(f)=(\iota_m^i(f(0)),\dots,\iota_m^i(f(n-1)),x_m^{m+i},\dots,x_{m+i-1}^{m+i})$$
\end{lemma}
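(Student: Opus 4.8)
The plan is to argue by induction on $i$, using the recursion $qq^{i+1}(f)=qq(qq^i(f))$ as the engine and the defining formula (\ref{2015.08.26.eq9}) at each stage. For the base case $i=0$ the morphism $\iota_m^0:stn(m)\sr stn(m)$ is the identity, so the coercion $RR(\iota_m^0)$ is the identity on $RR(m)$, and the block of extra entries $x_m^{m+i},\dots,x_{m+i-1}^{m+i}$ is empty; hence the claimed formula reduces to $qq^0(f)=(f(0),\dots,f(n-1))=f$, which is correct since $qq^0$ is the identity.

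For the inductive step I would substitute the inductive hypothesis
$$qq^i(f)=(\iota_m^i(f(0)),\dots,\iota_m^i(f(n-1)),x_m^{m+i},\dots,x_{m+i-1}^{m+i})$$
an element of $RR(n+i,m+i)$, into the defining formula for $qq$, now applied at level $m+i$. By (\ref{2015.08.26.eq9}) this exhibits $qq^{i+1}(f)=qq(qq^i(f))$ as the sequence obtained by applying the coercion $\iota_{m+i}^1$ to each of the $n+i$ entries of $qq^i(f)$ and then appending the final entry $x_{m+i}^{m+i+1}$.

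It then remains to evaluate $\iota_{m+i}^1$ on the two kinds of entries. On the first $n$ entries I need the coercion identity $\iota_{m+i}^1(\iota_m^i(f(k)))=\iota_m^{i+1}(f(k))$; since each $\iota$ here abbreviates the coercion $RR(\iota_\bullet^\bullet)$, this is exactly the functoriality of $RR$ applied to the evident equality $\iota_m^i\circ\iota_{m+i}^1=\iota_m^{i+1}$ in $F$ (in diagrammatic order), which holds because both sides are the standard index-preserving inclusion $stn(m)\sr stn(m+i+1)$. On the block of $x$-entries I would invoke (\ref{2015.08.22.eq8}), which yields $\iota_{m+i}^1(x_{m+j}^{m+i})=x_{m+j}^{m+i+1}$ for $j=0,\dots,i-1$. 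Together with the appended entry $x_{m+i}^{m+i+1}$, these turn the tail into $x_m^{m+i+1},\dots,x_{m+i}^{m+i+1}$, which is precisely the block $x_m^{m+(i+1)},\dots,x_{m+(i+1)-1}^{m+(i+1)}$ demanded by the statement at $i+1$, while the head becomes $\iota_m^{i+1}(f(0)),\dots,\iota_m^{i+1}(f(n-1))$.

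The argument is otherwise pure index bookkeeping; the one point requiring care, and the step I would flag as the main (minor) obstacle, is keeping the composition order straight so that the nested coercion $\iota_{m+i}^1\circ\iota_m^i$ is genuinely $RR$ applied to $\iota_m^{i+1}$ rather than to a transposed composite, and correctly tracking that the new entry produced by $qq$ lands at the right-hand end of the $x$-block.
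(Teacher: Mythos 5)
Your proof is correct and follows exactly the route the paper intends: the paper's own proof is just ``Straightforward by induction on $i$,'' and your argument is that induction carried out in full, with the base case handled via $RR(\mathrm{id})=\mathrm{id}$, the head entries via functoriality of $RR$ applied to $\iota_m^i\circ\iota_{m+i}^1=\iota_m^{i+1}$, and the tail entries via (\ref{2015.08.22.eq8}). Nothing is missing; your flagged concern about composition order is handled correctly in the diagrammatic convention.
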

\begin{proof}
Straightforward by induction on $i$.
\end{proof}
\begin{lemma}
\llabel{2015.08.26.l3a}
For $n,i\in\nat$ one has
$$qq^i(L(\iota_n^1))=L(\partial^n_{n+i})$$
\end{lemma}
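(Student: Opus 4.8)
The plan is to compute both sides explicitly using the sequence notation and the formulas already established, and then compare them entry by entry. The statement $qq^i(L(\iota_n^1))=L(\partial^n_{n+i})$ is an identity between two elements of $RR(n,n+i)$, so it suffices to verify that they have the same $j$-th component for each $j=0,\dots,n-1$.

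First I would write out the left-hand side. By Lemma \ref{2015.08.26.l2} applied to $f=L(\iota_n^1)$, which lies in $RR(n+1,n)$, I get
$$qq^i(L(\iota_n^1))=(\iota_n^i(L(\iota_n^1)(0)),\dots,\iota_n^i(L(\iota_n^1)(n-1)),x_n^{n+i},\dots,x_{n+i-1}^{n+i}).$$
Here I must be slightly careful about indices: $\iota_n^1$ is a morphism from $n$ to $n+1$, so $L(\iota_n^1)\in RR(n,n+1)$ has $n$ components, and by (\ref{2015.08.24.eq7}) together with (\ref{2015.08.22.eq7}) we have $L(\iota_n^1)=L(\partial_n^n)=(x_0^{n+1},\dots,x_{n-1}^{n+1})$, so $L(\iota_n^1)(j)=x_j^{n+1}$ for $j=0,\dots,n-1$. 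Applying $\iota_{n+1}^{i}$ (the coercion-level $RR$ of this inclusion) and using (\ref{2015.08.22.eq8}), which gives $\iota^i(x^{\bullet}_j)=x^{\bullet+i}_j$, each of the first $n$ entries becomes $x_j^{n+1+i}$.

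Next I would write out the right-hand side. By (\ref{2015.08.24.eq7}), $L(\partial^n_{n+i})$ is the sequence listing all $x_k^{n+i+1}$ for $k\neq n$, i.e.
$$L(\partial^n_{n+i})=(x_0^{n+i+1},\dots,x_{n-1}^{n+i+1},x_{n+1}^{n+i+1},\dots,x_{n+i}^{n+i+1}).$$
Its first $n$ entries are $x_j^{n+i+1}$ for $j=0,\dots,n-1$, matching the first block of the left-hand side, and its remaining $i$ entries are $x_{n+1}^{n+i+1},\dots,x_{n+i}^{n+i+1}$, which must be matched against the trailing block $x_n^{n+i},\dots,x_{n+i-1}^{n+i}$ from $qq^i$.

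The main obstacle I anticipate is purely bookkeeping: reconciling the index ranges in the two trailing blocks and confirming the superscripts agree, since $qq^i$ adds its new variables $x_m^{m+i},\dots,x_{m+i-1}^{m+i}$ with $m=n$ while $\partial^n_{n+i}$ simply omits the value $n$. One cleanly sidestepping route is to prove the statement by induction on $i$ rather than invoking Lemma \ref{2015.08.26.l2}: the base case $i=0$ reads $L(\iota_n^1)=L(\partial^n_n)$, which is (\ref{2015.08.22.eq7}) combined with (\ref{2015.07.12.eq5}), and the inductive step applies the definition (\ref{2015.08.26.eq9}) of $qq$ to $qq^{i}(L(\iota_n^1))=L(\partial^n_{n+i})$, using (\ref{2015.08.22.eq8}) to push $\iota^1$ through each $x_j^{n+i+1}$ and the cosimplicial identity relating $\partial^n_{n+i+1}$ to $\partial^n_{n+i}$ to close the induction. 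I expect the inductive argument to be the more robust of the two, since it avoids juggling the full expanded sequences and reduces everything to a single application of $qq$.
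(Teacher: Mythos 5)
Your main computation is the paper's own argument, but it contains an index slip that manufactures the very ``obstacle'' you end up worrying about. Since $\iota_n^1\in F(n,n+1)$, the morphism $f=L(\iota_n^1)$ lies in $RR(n,n+1)$ (as your own parenthetical correction says, not in $RR(n+1,n)$ as you first wrote), so when you apply Lemma \ref{2015.08.26.l2} the codomain parameter of that lemma, its ``$m$'', must be $n+1$, not $n$. The correct instantiation is
$$qq^i(L(\iota_n^1))=(\iota_{n+1}^i(x_0^{n+1}),\dots,\iota_{n+1}^i(x_{n-1}^{n+1}),x_{n+1}^{n+1+i},\dots,x_{n+i}^{n+1+i}),$$
whose trailing block already consists of the elements $x_{n+1}^{n+i+1},\dots,x_{n+i}^{n+i+1}$ of $RR(n+i+1)$. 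Rewriting the first block by (\ref{2015.08.22.eq8}) turns the whole sequence into exactly the expression (\ref{2015.08.24.eq7}) for $L(\partial^n_{n+i})$, and the proof is finished; this is precisely the paper's proof. By contrast, the trailing block you wrote down, $x_n^{n+i},\dots,x_{n+i-1}^{n+i}$, consists of elements of $RR(n+i)$, so it cannot be ``reconciled'' with the right-hand side at all: the mismatch is not a bookkeeping subtlety awaiting resolution but a symptom of having instantiated the lemma at $m=n$. The same slip shows up when you say both sides lie in $RR(n,n+i)$: they are elements of $RR(n+i,n+i+1)$, with $n+i$ components each, which is also why checking only the components $j=0,\dots,n-1$ would not suffice.

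Your fallback induction on $i$ is sound and would close the argument: the base case is (\ref{2015.08.22.eq7}) together with (\ref{2015.07.12.eq5}), and the inductive step is one application of (\ref{2015.08.26.eq9}) with codomain index $n+i+1$, followed by (\ref{2015.08.22.eq8}) and a direct comparison with (\ref{2015.08.24.eq7}); no cosimplicial identity is needed beyond that comparison. But note this is not genuinely a different route from the paper's: Lemma \ref{2015.08.26.l2} is itself proved by exactly this induction, so the closed-formula argument merely packages the same induction once and for all. The real fix to your proposal is the index correction above, after which your first approach terminates immediately.
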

\begin{proof}
We have ${L}(\iota_n^1)=L(\partial_n^n)=(x_0^{n+1},\dots,x^{n+1}_{n-1})$. By Lemma \ref{2015.08.26.l2} and (\ref{2015.08.22.eq8}) we get
$$qq^i({L}(\iota_n^1))=(\iota_{n+1}^i(x_0^{n+1}),\dots,\iota_{n+1}^i(x_{n-1}^{n+1}),x_{n+1}^{n+1+i},\dots,x_{n+i}^{n+1+i})=$$
$$=(x_0^{n+1+i},\dots,x_{n-1}^{n+1+i},x_{n+1}^{n+1+i},\dots,x_{n+i}^{n+1+i})={L}(\partial_n^{n+i})$$
where the last equality is (\ref{2015.08.24.eq7}). 
\end{proof}
\begin{lemma}
\llabel{2015.08.28.l1}
For $i,m\in\nat$ and $r\in RR(m)$ one has
$$qq^i(x_0^m,\dots,x^m_{m-1},r)=(x_0^{m+i},\dots,x^{m+i}_{m-1},\iota_m^i(r),x_m^{m+i},\dots,x_{m+i-1}^{m+i})$$
\end{lemma}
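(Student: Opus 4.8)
The plan is to recognize the argument $(x_0^m,\dots,x_{m-1}^m,r)$ as an element of $RR(m+1,m)$ and then obtain the claim as an immediate specialization of Lemma \ref{2015.08.26.l2}, simplifying the resulting leading entries by means of (\ref{2015.08.22.eq8}).

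First I would observe that $(x_0^m,\dots,x_{m-1}^m,r)$ is a sequence of $m+1$ elements of $RR(m)$, hence in our sequence notation a function $stn(m+1)\sr RR(m)$, i.e. a morphism from $m+1$ to $m$ in $T$, so it lies in $RR(m+1,m)$. Writing $f$ for this morphism, we have $f(j)=x_j^m$ for $j=0,\dots,m-1$ and $f(m)=r$. This is exactly the shape of object to which Lemma \ref{2015.08.26.l2} applies, with the role of $n$ there played here by $m+1$.

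Next I would apply Lemma \ref{2015.08.26.l2} with $n$ replaced by $m+1$, which gives
$$qq^i(f)=(\iota_m^i(f(0)),\dots,\iota_m^i(f(m)),x_m^{m+i},\dots,x_{m+i-1}^{m+i}),$$
where the trailing block consists of the $i$ elements $x_m^{m+i},\dots,x_{m+i-1}^{m+i}$. Substituting the values of $f$ rewrites the first $m$ entries as $\iota_m^i(x_j^m)$ for $j=0,\dots,m-1$, the $(m+1)$-st entry as $\iota_m^i(r)$, and leaves the trailing block unchanged.

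Finally I would invoke (\ref{2015.08.22.eq8}), namely $\iota_m^i(x_j^m)=x_j^{m+i}$, to replace each $\iota_m^i(x_j^m)$ by $x_j^{m+i}$, yielding precisely the claimed right-hand side. I do not expect any genuine obstacle: the statement is a direct instance of Lemma \ref{2015.08.26.l2}, and the only point deserving care is the index bookkeeping, i.e. checking that the $m$ leading terms $x_0^{m+i},\dots,x_{m-1}^{m+i}$, the single middle term $\iota_m^i(r)$, and the $i$ trailing terms $x_m^{m+i},\dots,x_{m+i-1}^{m+i}$ assemble into the asserted sequence of length $m+1+i$.
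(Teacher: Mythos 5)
Your proposal is correct and follows exactly the paper's own proof: apply Lemma \ref{2015.08.26.l2} to the morphism $(x_0^m,\dots,x_{m-1}^m,r)\in RR(m+1,m)$ and then rewrite the leading $m$ entries using (\ref{2015.08.22.eq8}). The extra care you take in identifying the argument as an element of $RR(m+1,m)$ and in checking the length bookkeeping is sound but adds nothing beyond what the paper does implicitly.
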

\begin{proof}
One has
$$qq^i(x_0^m,\dots,x^m_{m-1},r)=(\iota_m^i(x_0^m),\dots,\iota_m^i(x^m_{m-1}),\iota_m^i(r),x_m^{m+i},\dots,x_{m+i-1}^{m+i})=$$
$$(x_0^{m+i},\dots,x^{m+i}_{m-1},\iota_m^i(r),x_m^{m+i},\dots,x_{m+i-1}^{m+i})$$
where the first equality is by Lemma \ref{2015.08.26.l2} and the second one by (\ref{2015.08.22.eq8}). 
\end{proof}

\subsection{The C-system $C(\RR)$}
\llabel{CRR}

In \cite{LandC} we constructed for any Lawvere theory $(T,L)$ a C-system $LC((T,L))$. For $(T,L)=RML(\RR)$ we denote the C-system $LC((T,L))$ by $C(\RR)$. In this section we first provide a more explicit description of $C(\RR)$ and then compute the action of the operations $T,\wt{T},S,\wt{S}$ and $\delta$ on the B-sets $(Ob(CC(\RR)),\wt{Ob}(CC(\RR)))$ of this C-system (cf. Definition \ref{2015.08.26.def1}).    

Recall that as a category $C(\RR)$ is the opposite category to $T$.  To distinguish the positions in formulas where natural numbers are used as objects of $C(\RR)$ we will write in such places $\wh{m}$ instead of $m$, $\wh{n}$ instead of $n$ etc.  

We consider $L$ as a functor
$${L}:F^{op}\sr C(\RR)$$
i.e., as a contravariant functor from $F$ to $C(\RR)$ and keep the conventions introduced in the previous section the most important of which is that for $f\in F(m,n)$ and $x\in RR(m)$ we write $f(x)$ for $RR(f)(x)=\mbind(f\circ \eta(n))(x)$. 

The $ft$ function on $C(\RR)$ is defined by the formula $ft(\wh{n+1})=\wh{n}$ and $ft(\wh{0})=\wh{0}$.

The $p$-morphisms are defined by setting $p_{\wh{0}}=Id_{\wh{0}}$ and $p_{\wh{n+1}}:\wh{n+1}\sr \wh{n}$ to be the morphism $L(\iota_n^1)$. In the sequence notation we have
\begin{eq}\llabel{2015.08.24.eq6}
p_{\wh{n+1}}=(x_0^{n+1},\dots,x_{n-1}^{n+1})
\end{eq}
For a morphism $f:\wh{m}\sr \wh{n}$ in $C(\RR)$ we have $f^*(\wh{n+1})=\wh{m+1}$. 

Before giving an explicit description of $q$-morphisms we will prove the following lemma.
\begin{lemma}
\llabel{2015.07.24.l1}
One has:
\begin{enumerate}
\item Let $f=(f(0),\dots,f(n))$ be a morphism $\wh{m+1}\sr\wh{n+1}$. Then 
$$f\circ_C p_{\wh{n+1}}=(f(0),\dots,f(n-1))$$
\item Let $f=(f(0),\dots,f(n-1))$ be a morphism $\wh{m}\sr \wh{n}$. Then 
$$p_{\wh{m+1}}\circ_C f=(\iota_m^1(f(0)),\dots,\iota_m^1(f(n-1)))$$
\end{enumerate}
\end{lemma}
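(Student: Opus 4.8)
The plan is to reduce both formulas to the two composition computations already carried out inside $T$, namely Lemma \ref{2016.01.15.l4} and Lemma \ref{2015.08.26.l1}, by translating each composition in $C(\RR)$ into the corresponding composition in $T$. The single point that must be handled with care throughout is that $C(\RR)$ is the opposite category of $T$, so that for morphisms of $C(\RR)$ one has $a\circ_C b=b\hc a$ computed in $T$; once this is kept in mind, the rest is bookkeeping of sources, targets and sequence lengths, matched against the index conventions of the two cited lemmas.

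For part 1, I would first record that $p_{\wh{n+1}}=L(\iota_n^1)$, and that, read in $T$, the morphism $f:\wh{m+1}\sr\wh{n+1}$ of $C(\RR)$ is the element $(f(0),\dots,f(n))$ of $Fun(stn(n+1),RR(m+1))$, since $Mor_{C(\RR)}(\wh{m+1},\wh{n+1})=Mor_T(n+1,m+1)$. Because composition in $C(\RR)$ reverses the order, $f\circ_C p_{\wh{n+1}}$ is computed in $T$ as $L(\iota_n^1)\hc f$. This is exactly the quantity evaluated in Lemma \ref{2015.08.26.l1}, applied with that lemma's index $m$ taken to be our $n$ (so its $n$ becomes our $m+1$); it yields $L(\iota_n^1)\hc f=(f(0),\dots,f(n-1))$, which is the desired formula.

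For part 2, the morphism $f:\wh{m}\sr\wh{n}$ of $C(\RR)$ is, read in $T$, the element $(f(0),\dots,f(n-1))$ of $Fun(stn(n),RR(m))$, while $p_{\wh{m+1}}=L(\iota_m^1)$ lies in $Fun(stn(m),RR(m+1))$. Reversing the order again, $p_{\wh{m+1}}\circ_C f$ is computed in $T$ as $f\hc L(\iota_m^1)$. Applying Lemma \ref{2016.01.15.l4} with $g=L(\iota_m^1)$ gives $f\hc L(\iota_m^1)=(\mbind(L(\iota_m^1))(f(0)),\dots,\mbind(L(\iota_m^1))(f(n-1)))$. It then remains only to invoke the coercion identity $RR(\iota_m^1)=\mbind(L(\iota_m^1))$ together with the convention that $\iota_m^1(x)$ abbreviates $RR(\iota_m^1)(x)$, so that $\mbind(L(\iota_m^1))(f(i))=\iota_m^1(f(i))$; this rewrites the formula as the claimed $(\iota_m^1(f(0)),\dots,\iota_m^1(f(n-1)))$.

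The substantive content has already been isolated in Lemmas \ref{2016.01.15.l4} and \ref{2015.08.26.l1}, so no step here is expected to be a genuine obstacle. The only places where an error could creep in are the direction of composition forced by passing to $T^{op}$ and the correct matching of the index conventions when the two lemmas are invoked, together with the coercion identification used at the end of part 2; I would therefore be explicit about these translations rather than leave them implicit.
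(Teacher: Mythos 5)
Your proposal is correct and follows essentially the same route as the paper: translate each composition in $C(\RR)=T^{op}$ into the reversed composition in $T$ and then compute pointwise, the only cosmetic difference being that you cite the packaged Lemmas \ref{2015.08.26.l1} and \ref{2016.01.15.l4} where the paper invokes (\ref{2015.08.26.eq4}) and unfolds $\hc$ as $f\circ\mbind(g)$ directly. Your index matching in part 1 and the coercion identification $\mbind(L(\iota_m^1))=RR(\iota_m^1)$ in part 2 are exactly the steps the paper's computation performs.
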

\begin{proof}
Both sides of the first equality are elements of $Fun(stn(n), RR(m+1))$ and for $i\in stn(n)$ we have
$$(f\circ_C p_{\wh{n+1}})(i)=(L(\iota_{n}^1)\circ_T f)(i)=f(i)$$
where the second equality is by (\ref{2015.08.26.eq4}). 

Both sides of the second equality are again elements of $Fun(stn(n), RR(m+1))$ and for $i\in stn(n)$ we have:  
$$(p_{\wh{m+1}}\circ_C f)(i)=(f\circ_T L(\iota_{m}^1))(i)=(f\circ \mbind(L(\iota_{m}^1)))(i)=(f\circ RR(\iota_{m}^1))(i)=\iota_{m}^1(f(i))$$
\end{proof}
The $q$-morphisms were defined in \cite{LandC} in a somewhat implicit manner. We give their explicit description in the following lemma.
\begin{lemma}
\llabel{2016.01.15.l3}
Let $f:\wh{m}\sr \wh{n}$ be a morphism in $C(\RR)$. Then one has
$$q(f,\wh{n+1})=qq(f)$$
\end{lemma}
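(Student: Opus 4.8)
The plan is to recognize that both $q(f,\wh{n+1})$ and $qq(f)$ are elements of $RR(n+1,m+1)=Fun(stn(n+1),RR(m+1))$, i.e. morphisms $\wh{m+1}\sr\wh{n+1}$ in $C(\RR)$, and to pin down $q(f,\wh{n+1})$ through its role as the $q$-morphism of the C-system $C(\RR)$. First I would recall from \cite{LandC} the (somewhat implicit) definition of $q(f,\wh{n+1})$: its defining feature is that it fits into the canonical square over $f$, so that $q(f,\wh{n+1})\circ_C p_{\wh{n+1}}=p_{\wh{m+1}}\circ_C f$ and this square is a pull-back, and that the construction sends the freshly added variable to the fresh generator $x_m^{m+1}$. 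The strategy is then to compare $qq(f)$ with $q(f,\wh{n+1})$ coordinate by coordinate.

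The concrete computation I would carry out is the verification that $qq(f)$ makes the canonical square commute, i.e. that $qq(f)\circ_C p_{\wh{n+1}}=p_{\wh{m+1}}\circ_C f$. Writing $qq(f)=(\iota_m^1(f(0)),\dots,\iota_m^1(f(n-1)),x_m^{m+1})$ from its definition (\ref{2015.08.26.eq9}), the left-hand side is evaluated by the first part of Lemma \ref{2015.07.24.l1}, which simply drops the last coordinate and yields $(\iota_m^1(f(0)),\dots,\iota_m^1(f(n-1)))$. The right-hand side is evaluated directly by the second part of Lemma \ref{2015.07.24.l1}, giving the very same sequence $(\iota_m^1(f(0)),\dots,\iota_m^1(f(n-1)))$. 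Hence the square commutes, and, reading off coordinates, this forces the first $n$ coordinates (those indexed $0,\dots,n-1$) of any morphism fitting into the canonical square over $f$ to coincide with those of $qq(f)$.

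It then remains to match the last coordinate, the value on the new variable $n\in stn(n+1)$. Here I would note that the commutativity condition above says nothing about this coordinate: precomposition with $p_{\wh{n+1}}$ discards it, so the canonical square alone leaves the value on the fresh variable undetermined. This is exactly the point where one must invoke the explicit construction of \cite{LandC} rather than the pull-back property in isolation, since it is the construction of the q-morphism that fixes the value on the new variable to be the fresh generator $x_m^{m+1}$. I expect this last step to be the main obstacle, both because it requires unwinding the implicit definition in \cite{LandC} and because it is precisely the coordinate not controlled by the canonical square. Once it is checked that $q(f,\wh{n+1})$ produced by \cite{LandC} has last coordinate $x_m^{m+1}$, comparing all $n+1$ coordinates with (\ref{2015.08.26.eq9}) yields $q(f,\wh{n+1})=qq(f)$ and completes the proof.
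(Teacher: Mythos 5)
Your proposal follows essentially the same route as the paper: the paper quotes from \cite{LandC} that $q(f,\wh{n+1})$ is the \emph{unique} morphism satisfying both $q(f)\circ_C p_{\wh{n+1}}=p_{\wh{m+1}}\circ_C f$ (which you verify for $qq(f)$ via the two parts of Lemma \ref{2015.07.24.l1}, exactly as the paper does) and the second defining equation $q(f)\circ_C (x_n^{n+1})=(x_m^{m+1})$, which is precisely the ``fresh variable goes to the fresh generator'' condition you anticipated. The step you flag as the main obstacle turns out to be immediate rather than an obstacle: by (\ref{2015.08.24.eq5}) composition with $(x_n^{n+1})$ just extracts the last coordinate, so $(qq(f)\circ_C (x_n^{n+1}))(0)=\mbind(qq(f))(x_n^{n+1})=qq(f)(n)=x_m^{m+1}$ by (\ref{2015.08.26.eq9}), and the uniqueness clause then gives $q(f,\wh{n+1})=qq(f)$.
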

\begin{proof}
The morphism $q(f)=q(f,\wh{n+1})$ was defined in \cite{LandC} as the unique morphism such that
$$q(f)\circ_C p_{\wh{n+1}}=p_{\wh{m+1}}\circ_C f$$
and
$$q(f)\circ_C (x_n^{n+1})=(x_m^{m+1})$$
For the first equation we have
$$qq(f)\circ_C p_{\wh{n+1}}=(\iota_m^1(f(0)),\dots,\iota_m^1(f(n-1)))$$
by Lemma \ref{2015.07.24.l1}(1) and (\ref{2015.08.26.eq9}) and
$$p_{\wh{m+1}}\circ_C f = (\iota_m^1(f(0)),\dots,\iota_m^1(f(n-1)))$$
by Lemma \ref{2015.07.24.l1}(2).

Both sides of the second equation are elements of $Fun(stn(1),RR(m+1))$ and it is sufficient that their values on $0$ coincide. We have 
$$(q(f)\circ_C (x_n^{n+1}))(0)=((x_n^{n+1})\circ_T qq(f))(0)=((x_n^{n+1})\circ \mbind(qq(f)))=$$$$\mbind(qq(f))(x_n^{n+1})=qq(f)(n)=x_m^{m+1}$$
where the fourth equality is by (\ref{2015.08.24.eq5}) and the fifth by (\ref{2015.08.26.eq9}). This completes the proof of Lemma \ref{2016.01.15.l3}.
\end{proof}

Let us describe the constructions introduced in Section \ref{onCsystems} in the case of $C(\RR)$. Note that our wide-hat notation that distinguishes the places in formulas where natural numbers are used as objects of $C(\RR)$ allows us to avoid the ambiguity that might have arisen otherwise. For example $p_{m,n}$ could be understood either as the canonical morphism $m\sr n$ using the notation $p_{\Gamma',\Gamma}$ introduced in Section \ref{onCsystems} or as the canonical morphism $m\sr m-n$ using the notation $p_{\Gamma,i}$ that we have used in \cite{Csubsystems}.  The use of the wide-hat diacritic allows to distinguish between $p_{\wh{m},\wh{n}}$ - a morphism $\wh{m}\sr \wh{n}$,  and $p_{\wh{m},n}$ - a morphism $\wh{m}\sr \wh{m-n}$. 
\begin{lemma}
\llabel{2015.08.22.l6}
Let $n,i\in\nat$. 
\begin{enumerate}
\item One has
\begin{enumerate}
\item $p_{\wh{n+i},i}=L(\iota^{n+i}_{n})=(x_0^{n+i},\dots,x_{n-1}^{n+i})$,
\item for $m\in\nat$ and $g=(g(0),\dots,g(n+i-1))$ from $\wh{m}$ to $\wh{n+i}$ one has 
$$g\circ p_{\wh{n+i},i}=(g(0),\dots,g(n-1)),$$
\end{enumerate}
\item for $f:\wh{m}\sr \wh{n}$ one has
$$f^*(\wh{n+i},i)=m+i$$
and
$$q(f,\wh{n+i},i)=qq^i(f)$$
\end{enumerate}
\end{lemma}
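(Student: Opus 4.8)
Let me plan a proof.

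The plan is to prove both parts by induction on $i$, using the inductive definitions of $p_{\Gamma,i}$, $f^*(\Gamma',i)$ and $q(f,\Gamma',i)$ from Section \ref{onCsystems}, together with the explicit computations of $p_{\wh{n+1}}$ in (\ref{2015.08.24.eq6}) and of $q(f,\wh{n+1})=qq(f)$ in Lemma \ref{2016.01.15.l3}. Part (1)(a) identifies the iterated projection $p_{\wh{n+i},i}:\wh{n+i}\sr\wh{n}$ with $L(\iota^{n+i}_n)$; part (1)(b) is the explicit sequence-formula for precomposing with it; part (2) identifies the iterated pullback with $m+i$ and the iterated $q$-morphism with $qq^i(f)$.

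First I would prove (1)(a). For $i=0$ we have $p_{\wh{n},0}=Id_{\wh{n}}=\eta(n)=(x_0^n,\dots,x_{n-1}^n)=L(\iota_n^n)$, recalling $\iota_n^n$ is the identity inclusion $stn(n)\sr stn(n)$, which in the notation of the excerpt means $L(\iota^{n}_n)=Id$. For the inductive step, the defining recursion gives $p_{\wh{n+i+1},i+1}=p_{\wh{n+i+1}}\circ p_{ft(\wh{n+i+1}),i}=p_{\wh{n+i+1}}\circ p_{\wh{n+i},i}$. By the inductive hypothesis the second factor is $L(\iota^{n+i}_n)$ and by (\ref{2015.08.24.eq6}) the first factor is $p_{\wh{n+i+1}}=L(\iota^1_{n+i})$. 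So I must compute $L(\iota^1_{n+i})\circ_C L(\iota^{n+i}_n)$; since $L:F^{op}\sr C(\RR)$ is a functor, this equals $L(\iota^{n+i}_n\circ\iota^1_{n+i})$, and the composite inclusion $stn(n)\sr stn(n+i)\sr stn(n+i+1)$ is exactly $\iota^{n+i+1}_n$. The sequence form $(x_0^{n+i},\dots,x_{n-1}^{n+i})$ then follows from (\ref{2016.01.15.eq1})/(\ref{2015.08.22.eq8}), i.e. $\iota^{n+i}_n(x_j^n)=x_j^{n+i}$ applied to $L(\iota^{n+i}_n)=(\eta(n)\text{-images})$, or more directly from the defining formula of $L$ on $\iota^{n+i}_n$.

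Then (1)(b) is immediate: both sides are functions $stn(n)\sr RR(m)$, and for $i\in stn(n)$, using the diagrammatic composition $g\circ_C p_{\wh{n+i},i}=p_{\wh{n+i},i}\circ_T g = L(\iota^{n+i}_n)\circ_T g$, Lemma \ref{2015.08.30.l1} (equation (\ref{2015.08.26.eq4})) gives value $g(\iota^{n+i}_n(j))=g(j)$ for $j<n$; so the composite is $(g(0),\dots,g(n-1))$. For part (2), I proceed again by induction on $i$ using (\ref{2016.01.31.eq2}). The base case $i=0$ gives $f^*(\wh n,0)=\wh m$ and $q(f,\wh n,0)=f=qq^0(f)$. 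For the step, $f^*(\wh{n+i+1},i+1)=q(f,ft(\wh{n+i+1}),i)^*(\wh{n+i+1})=q(f,\wh{n+i},i)^*(\wh{n+i+1})$; by the inductive hypothesis $q(f,\wh{n+i},i)=qq^i(f):\wh{m+i}\sr\wh{n+i}$, and the single-step formula $g^*(\wh{k+1})=\wh{k+1\text{ shifted}}$ (i.e. $g^*(\wh{n+i+1})=\wh{m+i+1}$) together with Lemma \ref{2016.01.15.l3} gives $q(qq^i(f),\wh{n+i+1})=qq(qq^i(f))=qq^{i+1}(f)$, as desired.

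The main obstacle will be bookkeeping the dimension shifts correctly and matching the two different composition conventions ($\circ_C$ in $C(\RR)=T^{op}$ versus $\circ_T$ in $T$), since a sign error in which inclusion $\iota$ appears on which side would derail the functoriality step in (1)(a); once that functorial identity $L(\iota^1_{n+i})\circ_C L(\iota^{n+i}_n)=L(\iota^{n+i+1}_n)$ and the identification $q(g,\wh{k+1})=qq(g)$ from Lemma \ref{2016.01.15.l3} are in hand, every step is a routine unwinding of the inductive definitions.
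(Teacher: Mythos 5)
Your proof is correct, and it reaches the same statements by a somewhat different organization of part (1). The paper proves (1)(a) and (1)(b) by a single \emph{simultaneous} induction on $i$: the inductive step for (a) is obtained by applying the inductive hypothesis of (b) to $g=p_{\wh{n+i+1}}=(x_0^{n+i+1},\dots,x_{n+i-1}^{n+i+1})$, while the step for (b) first drops the last term via Lemma \ref{2015.07.24.l1}(1) and then invokes the inductive hypothesis of (b) again, so the whole argument stays inside the sequence calculus of $C(\RR)$. You instead make (a) self-contained: your induction step uses functoriality of $L$ to turn the composite $p_{\wh{n+i+1}}\circ p_{\wh{n+i},i}=L(\iota^1_{n+i})\circ_C L(\iota^{n+i}_n)$ into the composite of standard inclusions in $F$, where it is trivially $\iota^{n+i+1}_n$; then (b) requires no induction at all, being an immediate pointwise consequence of (a) and Lemma \ref{2015.08.30.l1} (equation (\ref{2015.08.26.eq4})). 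Your route buys a cleaner logical structure --- no interleaved induction, and (b) becomes a one-line corollary of (a) --- at the price of invoking functoriality of $L$, which the paper's purely computational argument never needs; the paper's version has the advantage of exercising exactly the sequence manipulations used throughout the rest of the text. Part (2) is handled identically in both proofs: the object part is immediate from $f^*(\wh{n+1})=\wh{m+1}$, and the morphism part follows by induction from Lemma \ref{2016.01.15.l3} via $q(f,\wh{n+i+1},i+1)=qq(q(f,\wh{n+i},i))=qq(qq^i(f))$. The only blemish is notational: you write the identity inclusion as $\iota^n_n$, following the superscript-as-codomain convention of the lemma statement rather than the body text's $\iota^i_n:stn(n)\sr stn(n+i)$ (under which it would be $\iota^0_n$); this inconsistency originates in the paper itself and does not affect your argument.
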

\begin{proof}
All three assertions a proved by induction on $i$. For the first assertion both parts are proved by induction simultaneously. One has
\begin{enumerate}
\item in the case $i=0$ the first assertion follows from the identity axiom of the functor defined by $\RR$ as in Construction \ref{2016.01.15.constr1} and second from the identity axiom of the category $C(\RR)$,
\item for the successor of $i$ we have
$$p_{\wh{n+i+1},i+1}=p_{\wh{n+i+1}}\circ p_{\wh{n+i},i}=(x_0^{n+i+1},\dots,x_{n-1}^{n+i+1})$$
where the second equality is by the second part of the inductive assumption. For the inductive step in the second part we have
$$(g(0),\dots,g(n+i))\circ p_{\wh{n+i+1},i+1}=(g(0),\dots,g(n+i))\circ p_{\wh{n+i+1}}\circ p_{\wh{n+i},i}=$$$$
(g(0),\dots,g(n+i-1))\circ p_{\wh{n+i},i}=(g(0),\dots,g(n-1))$$
\end{enumerate}
The proof of the first part of the second assertion is obvious. For the second part we have:
\begin{enumerate}
\item for $i=0$ the assertion is obvious,
\item for the successor of $i$ we have 
$$q(f,\wh{n+i+1},i+1)=qq(q(f,\wh{n+i},i))=qq(qq^i(f))=qq^{i+1}(f)$$
\end{enumerate}
\end{proof}
\begin{lemma}
\llabel{2015.08.22.l7}
Let $f=(f(0),\dots,f(n))$ be a morphism from $\wh{n}$ to $\wh{n+1}$. Then $f\circ p_{\wh{n+1}}=Id_{\wh{n}}$ if and only if $f(i)=x_i^{n}$ for $i=0,\dots,n-1$.
\end{lemma}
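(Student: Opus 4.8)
The plan is to reduce the biconditional to the explicit computation of the composite $f\circ_C p_{\wh{n+1}}$ and then compare the resulting sequence componentwise with the sequence representation of the identity morphism. Since $f$ is a morphism $\wh{n}\sr\wh{n+1}$ and $p_{\wh{n+1}}:\wh{n+1}\sr\wh{n}$, the composite $f\circ_C p_{\wh{n+1}}$ is an endomorphism of $\wh{n}$, i.e.\ an element of $Fun(stn(n),RR(n))$, and the equation $f\circ_C p_{\wh{n+1}}=Id_{\wh{n}}$ is an equality of two such sequences. So everything comes down to identifying both sides.

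First I would compute the left-hand side. Applying Lemma \ref{2015.07.24.l1}(1) to the morphism $f=(f(0),\dots,f(n))$ of codomain $\wh{n+1}$ (the proof of that lemma uses only the codomain, via (\ref{2015.08.26.eq4})/Lemma \ref{2015.08.30.l1}, so the fact that our domain is $\wh{n}$ rather than a successor causes no trouble; the degenerate case $n=0$ is vacuous), I obtain
$$f\circ_C p_{\wh{n+1}}=(f(0),\dots,f(n-1)).$$
In other words, composing with $p_{\wh{n+1}}$ simply truncates the last entry of the sequence, which is exactly the content of (\ref{2016.01.15.eq3}) transported through the $op$-convention $C(\RR)=T^{op}$.

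Next I would identify the right-hand side. The identity morphism $Id_{\wh{n}}$ is $\eta(n)$, whose sequence representation is $(x_0^n,\dots,x_{n-1}^n)$ because $x_i^n=\eta(n)(i)$. Comparing the two sequences entry by entry, $f\circ_C p_{\wh{n+1}}=Id_{\wh{n}}$ holds precisely when $f(i)=x_i^n$ for all $i=0,\dots,n-1$, which is the asserted condition. I expect no real obstacle here: the statement is essentially the second assertion of Lemma \ref{2015.08.26.l1} read in $C(\RR)$, and the only care required is bookkeeping with the opposite-category composition and recalling that $Id_{\wh{n}}$ has the sequence form $(x_0^n,\dots,x_{n-1}^n)$.
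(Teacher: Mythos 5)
Your proof is correct and is essentially the paper's argument: the paper disposes of this lemma by citing the second assertion of Lemma \ref{2015.08.26.l1}, which is exactly the truncation computation plus the comparison with $Id_{T,n}=(x_0^n,\dots,x_{n-1}^n)$ that you carry out explicitly (via Lemma \ref{2015.07.24.l1}(1), whose proof rests on the same equation (\ref{2015.08.26.eq4})). Your extra care about the domain mismatch and the $n=0$ case is sound but does not change the substance.
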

\begin{proof}
It follows immediately from Lemma \ref{2015.08.26.l1}.
\end{proof}
\begin{lemma}
\llabel{2015.09.09.l1}
Let $f=(f(0),\dots,f(n-1))$ be a morphism from $\wh{m}$ to $\wh{n}$ where $n>0$. Then one has
$$s_f=(x_0^{m},\dots,x_{m-1}^{m},f(n-1))$$
\end{lemma}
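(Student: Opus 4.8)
The plan is to recall the definition of the section $s_f$ attached to a morphism $f:X\sr Y$ of a C-system with $l(Y)>0$, and then to verify that the displayed sequence satisfies the two conditions characterizing it. First I would set $g=f\circ p_{\wh n}:\wh m\sr ft(\wh n)=\wh{n-1}$; since $l(\wh n)-l(\wh{n-1})=1$ we have $g^*(\wh n)=\wh{m+1}$, so $ft(g^*(\wh n))=\wh m=dom(f)$ and $s_f$ is the unique morphism $\wh m\sr\wh{m+1}$ with
$$s_f\circ p_{\wh{m+1}}=Id_{\wh m}\quad\mbox{and}\quad s_f\circ q(g,\wh n)=f.$$
Thus it suffices to show that the candidate $s=(x_0^m,\dots,x_{m-1}^m,f(n-1))\in Fun(stn(m+1),RR(m))$, viewed as a morphism $\wh m\sr\wh{m+1}$, satisfies both equalities; uniqueness then gives $s_f=s$.

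The first equality is immediate from Lemma \ref{2015.08.22.l7}: a morphism $\wh m\sr\wh{m+1}$ precomposes with $p_{\wh{m+1}}$ to $Id_{\wh m}$ exactly when its first $m$ components are $x_0^m,\dots,x_{m-1}^m$, which holds for $s$ by inspection.

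For the second equality I would first unwind the two ingredients: by Lemma \ref{2015.07.24.l1}(1), $g=f\circ p_{\wh n}=(f(0),\dots,f(n-2))$, and then by Lemma \ref{2016.01.15.l3} together with (\ref{2015.08.26.eq9}),
$$q(g,\wh n)=qq(g)=(\iota_m^1(f(0)),\dots,\iota_m^1(f(n-2)),x_m^{m+1}).$$
Reading the composition in $C(\RR)=T^{op}$ through Lemma \ref{2016.01.15.l4}, the $j$-th component of $s\circ_C q(g,\wh n)$ is $\mbind(s)(q(g,\wh n)(j))$. For $j\le n-2$ this is $\mbind(s)(\iota_m^1(f(j)))$, so the crux is the identity $\mbind(s)(\iota_m^1(r))=r$ for all $r\in RR(m)$; for $j=n-1$ it is $\mbind(s)(x_m^{m+1})=s(m)=f(n-1)$ by (\ref{2015.08.24.eq5}). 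Granting the crux, the composite has components $f(0),\dots,f(n-2),f(n-1)$, i.e. equals $f$.

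The main obstacle is this crux identity, which I would prove from the relative monad axioms. Rewriting $\iota_m^1(r)$ via the coercion as $\mbind(L(\iota_m^1))(r)$ and applying axiom (3) of Definition \ref{2015.12.22.def1} reduces $\mbind(s)(\mbind(L(\iota_m^1))(r))$ to $\mbind(L(\iota_m^1)\circ\mbind(s))(r)$; since $(L(\iota_m^1)\circ\mbind(s))(i)=\mbind(s)(x_i^{m+1})=s(i)=x_i^m$ for $i<m$ by (\ref{2015.08.24.eq5}), the inner function is $\eta(m)$, and axiom (1) gives $\mbind(\eta(m))=Id_{RR(m)}$, whence the value is $r$. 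The bookkeeping here, keeping straight the several coercion meanings of $\iota_m^1$ and the identification $p_{\wh{m+1}}=L(\iota_m^1)$, is the only delicate point. A structural shortcut that sidesteps treating the first $n-1$ components one at a time is to post-compose with $p_{\wh n}$: the canonical square yields $q(g,\wh n)\circ p_{\wh n}=p_{\wh{m+1}}\circ g$, so $s\circ_C q(g,\wh n)\circ p_{\wh n}=s\circ p_{\wh{m+1}}\circ g=g=f\circ p_{\wh n}$, which by Lemma \ref{2015.07.24.l1}(1) pins down all components but the last, leaving only $j=n-1$ to check directly.
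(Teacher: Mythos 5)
Your proposal is correct and follows essentially the same route as the paper's own proof: both rest on the two defining equations of $s_f$ from \cite[Definition 2.3]{Csubsystems}, use Lemma \ref{2015.08.22.l7} for the first $m$ components, compute $q(ft(f),\wh{n})$ via Lemma \ref{2015.07.24.l1}(1), Lemma \ref{2016.01.15.l3} and (\ref{2015.08.26.eq9}), expand the composite with Lemma \ref{2016.01.15.l4}, and settle the last component by (\ref{2015.08.24.eq5}), with your ``crux identity'' $\mbind(s)(\iota_m^1(r))=r$ being exactly the chain of equalities the paper carries out using (\ref{2016.01.15.eq3}) and the monad axioms. The only (harmless) differences are one of direction --- you verify a candidate and appeal to uniqueness from the pull-back property of the canonical square, while the paper starts from $s_f$ and solves for its unknown last component --- plus your optional shortcut of post-composing with $p_{\wh{n}}$, which the paper does not use.
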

\begin{proof}
By \cite[Definition 2.3(2)]{Csubsystems} we have that 
$$s_f\circ p_{\wh{m+1}}=Id_{\wh{m}}$$
Therefore, by Lemma \ref{2015.08.22.l7}, $s_f$ is of the form $(x_0^m,\dots,x_{m-1}^m,sf)$ for some $sf\in RR(m)$. By \cite[Definition 2.3(3)]{Csubsystems} we have $f=s_f\circ q(ft(f),\wh{n})$ where $ft(f)=f\circ p_{\wh{n}}$. By Lemma \ref{2015.07.24.l1}(1) we have $ft(f)=(f(0),\dots,f(n-2))$ and by  Lemma \ref{2016.01.15.l3} and (\ref{2015.08.26.eq9}) we have
$$q(ft(f),\wh{n})=qq(ft(f))=(\iota_m^1(f(0)),\dots,\iota_m^1(f(n-2)),x_{m}^{m+1})$$
Therefore we should have
$$(f(0),\dots,f(n-1))=(\iota_m^1(f(0)),\dots,\iota_m^1(f(n-2)),x_{m}^{m+1})\hc (x_0^m,\dots,x_{m-1}^m,sf)$$
which is equivalent to, by Lemma \ref{2016.01.15.l4}, 
\begin{eq}\llabel{2016.01.15.eq6}
f(i)=\mbind(x_0^m,\dots,x_{m-1}^m,sf)(\iota_m^1(f(i)))
\end{eq}
for $i=0,\dots,n-2$ and 
\begin{eq}\llabel{2016.01.15.eq7}
f(n-1)=\mbind(x_0^m,\dots,x_{m-1}^m,sf)(x_{m}^{m+1})
\end{eq}
For the first series of equalities we get, by inserting the coercion $RR$ and rewriting of the right hand side, the following
$$\mbind(x_0^m,\dots,x_{m-1}^m,sf)(\iota_m^1(f(i)))=(\mbind(L(\iota_m^1))\circ \mbind(x_0^m,\dots,x_{m-1}^m,sf))(f(i))=$$$$\mbind(L(\iota_m^1)\circ \mbind(x_0^m,\dots,x_{m-1}^m,sf))(f(i))=\mbind(L(\iota_m^1)\hc (x_0^m,\dots,x_{m-1}^m,sf))(f(i))=$$$$\mbind((x_0^m,\dots,x_{m-1}^m))(f(i))=\mbind(\eta(m))(f(i))=Id_{RR(m)}(f(i))=f(i)$$
where the fourth equality is by (\ref{2016.01.15.eq3}). 

Equality (\ref{2016.01.15.eq7}) gives us, by (\ref{2015.08.24.eq5}) that $sf=f(n-1)$.
\end{proof}

Recall from \cite{Csubsystems} that for a C-system $CC$ one defines $\wt{Ob}(CC)$ as the subset of $Mor(CC)$ which consists of morphisms $s$ of the form $ft(X)\sr X$ such that $l(X)>0$ and $s\circ p_X=Id_{ft(X)}$. 
\begin{lemma}
\llabel{2015.08.24.l1}
Let $f:\wh{m}\sr\wh{n}$ and let $s:\wh{n}\sr\wh{n+1}$ be an element of $\wt{Ob}$. Then one has
$$f^*(s)=(x_0^m,\dots,x_{m-1}^m,\mbind(f)(s(n)))$$
\end{lemma}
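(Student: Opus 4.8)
The plan is to recognize $f^*(s)$ as an instance of the construction of Lemma \ref{2015.08.23.l1a} and then read off its single unknown coordinate by a direct computation in the Kleisli category $T$. The first step is to observe that a section $s:\wh{n}\sr\wh{n+1}$ in $\wt{Ob}$ is exactly a morphism over $\Delta=ft(\wh{n+1})=\wh{n}$, from $\Gamma'=\wh{n}$ to $\Gamma''=\wh{n+1}$: the defining equation $s\circ p_{\wh{n+1}}=Id_{\wh{n}}$ is precisely the condition $s\circ p_{\wh{n+1},\wh{n}}=p_{\wh{n},\wh{n}}$ that $s$ be over $\wh{n}$. Applying Lemma \ref{2015.08.23.l1a} with $a=s$, and using that $f^*(\wh{n+1})=\wh{m+1}$, that $f^*(\wh{n})=\wh{m}$ and $q(f,\wh{n})=f$ at level $0$, one obtains that $f^*(s):\wh{m}\sr\wh{m+1}$ is the unique morphism over $\wh{m}$ (equivalently, the unique section of $p_{\wh{m+1}}$) satisfying the compatibility equation
$$f^*(s)\circ q(f,\wh{n+1})=f\circ s.$$

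The second step pins down the shape of the sections involved. By Lemma \ref{2015.08.22.l7}, the section condition forces $f^*(s)=(x_0^m,\dots,x_{m-1}^m,c)$ for a single unknown $c=f^*(s)(m)\in RR(m)$, and the same lemma applied to $s$ gives $s=(x_0^n,\dots,x_{n-1}^n,s(n))$. Since both sides of the compatibility equation are elements of $Fun(stn(n+1),RR(m))$, it then suffices to extract $c$ by evaluating the $n$-th coordinate of that equation.

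For this computation I would use that $q(f,\wh{n+1})=qq(f)$ (Lemma \ref{2016.01.15.l3}), that composition in $C(\RR)$ is opposite to the composition $\hc$ in $T$, and the coordinate formula of Lemma \ref{2016.01.15.l4}. The left-hand side is $qq(f)\hc f^*(s)$, whose $n$-th coordinate is $\mbind(f^*(s))(qq(f)(n))=\mbind(f^*(s))(x_m^{m+1})$ by (\ref{2015.08.26.eq9}), and this equals $f^*(s)(m)=c$ by (\ref{2015.08.24.eq5}). The right-hand side is $s\hc f$, whose $n$-th coordinate is $\mbind(f)(s(n))$, again by Lemma \ref{2016.01.15.l4}. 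Equating the two yields $c=\mbind(f)(s(n))$, which is exactly the claimed formula.

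The only delicate points are bookkeeping rather than genuine obstacles: one must consistently reverse the composition order when passing between $C(\RR)$ and $T$, and keep track of the coercion conventions, under which $f(x)$ abbreviates $RR(f)(x)=\mbind(L(f))(x)$. Because existence and uniqueness of $f^*(s)$ are already guaranteed by Lemma \ref{2015.08.23.l1a}, there is no need to re-verify the remaining coordinates of the compatibility equation; extracting the single coordinate $c$ from the known morphism $f^*(s)$ completes the proof.
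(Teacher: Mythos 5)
Your proposal is correct and follows essentially the same route as the paper's proof: both identify $f^*(s)$ as the unique section over $\wh{m}$ characterized by the equation $f^*(s)\circ_C q(f,\wh{n+1})=f\circ_C s$ (an instance of Lemma \ref{2015.08.23.l1a}), use Lemma \ref{2015.08.22.l7} to pin its first $m$ coordinates, translate to $qq(f)\hc f^*(s)=s\hc f$ via Lemma \ref{2016.01.15.l3}, and extract the last coordinate by evaluating at $n$ using (\ref{2015.08.26.eq9}) and (\ref{2015.08.24.eq5}). The only difference is cosmetic: you cite Lemma \ref{2015.08.23.l1a} and Lemma \ref{2016.01.15.l4} explicitly where the paper leaves these references implicit or computes with $\eta$ directly.
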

\begin{proof}
The fact that the first $m$ terms of the sequence representation of $fs=f^*(s)$ have the required form follows from Lemma \ref{2015.08.22.l7}. It remains to prove that
$$fs(m)=\mbind(f)(s(n))=(s\hc f)(n)$$
The morphism $f^*(s)$, as a morphism over $\wh{m}$ is defined by the equation
$$f^*(s)\circ_C q(f,\wh{n+1})=f\circ_C s$$
which is equivalent, by Lemma \ref{2016.01.15.l3},  to $qq(f)\hc fs=s\hc f$. Therefore 
$$(s\hc f)(n)=(qq(f)\hc fs)(n)=\mbind(fs)(qq(f)(n))=\mbind(fs)(x_m^{m+1})=\mbind(fs)(\eta(m+1)(m))=$$$$(\eta(m+1)\circ\mbind(fs))(m)=fs(m).$$ 
The lemma is proved.
\end{proof}
\begin{lemma}
\llabel{2015.08.29.l1}
Let $f:\wh{m}\sr\wh{n}$ and let $s:\wh{n+i}\sr\wh{n+i+1}$ be an element of $\wt{Ob}$. Then one has
\begin{eq}\llabel{2015.08.29.eq1}
f^*(s)=(x_0^{m+i},\dots,x_{m+i-1}^{m+i},\mbind(qq^i(f))(s(n+i)))
\end{eq}
\end{lemma}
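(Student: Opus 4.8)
The plan is to reduce the general statement to the case $i=0$, which is exactly Lemma \ref{2015.08.24.l1}, by transporting the substitution $f$ up $i$ levels using the compatibility of pullbacks recorded in Lemma \ref{2015.08.29.l2}. The whole argument is then the composite of two already-established lemmas.

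First I would verify the one point that needs care: that the section $s$, regarded as a morphism $\wh{n+i}\sr\wh{n+i+1}$, is a morphism over $\wh{n+i}$ (and hence also over $\wh{n}$). Being an element of $\wt{Ob}$ means $s\circ p_{\wh{n+i+1}}=Id_{\wh{n+i}}$. Since $p_{\wh{n+i+1},\wh{n+i}}=p_{\wh{n+i+1}}$ and $p_{\wh{n+i},\wh{n+i}}=Id_{\wh{n+i}}$, this is precisely the condition that $s$ is over $\wh{n+i}$; composing further with $p_{\wh{n+i},\wh{n}}$ shows $s$ is also over $\wh{n}$. Note too that $\wh{n+i}$ is over $\wh{n}$ because $ft^i(\wh{n+i})=\wh{n}$.

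Next I would apply Lemma \ref{2015.08.29.l2} with $\Delta=\wh{n}$, $\Gamma'=\wh{n+i}$, $\Gamma''=\wh{n+i+1}$, $\Gamma'''=\wh{n+i}$, $a=s$ and the given $f:\wh{m}\sr\wh{n}$. The hypotheses were checked above, so the lemma yields $f^*(s)=q(f,\wh{n+i})^*(s)$. By Lemma \ref{2015.08.22.l6}(2) one has $q(f,\wh{n+i})=q(f,\wh{n+i},i)=qq^i(f):\wh{m+i}\sr\wh{n+i}$ (with $f^*(\wh{n+i},i)=m+i$), whence $f^*(s)=(qq^i(f))^*(s)$.

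Finally, $(qq^i(f))^*(s)$ is exactly the pullback of the section $s:\wh{n+i}\sr\wh{n+i+1}$ along the morphism $qq^i(f):\wh{m+i}\sr\wh{n+i}$, so Lemma \ref{2015.08.24.l1}, applied with $m$, $n$, $f$ replaced by $m+i$, $n+i$, $qq^i(f)$, gives $(qq^i(f))^*(s)=(x_0^{m+i},\dots,x_{m+i-1}^{m+i},\mbind(qq^i(f))(s(n+i)))$, which is precisely (\ref{2015.08.29.eq1}). I expect no genuine obstacle beyond the bookkeeping in the second paragraph verifying that $s$ is a morphism over $\wh{n+i}$; once that is in place, the two cited lemmas combine mechanically.
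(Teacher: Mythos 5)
Your proposal is correct and follows essentially the same route as the paper: the paper's proof likewise applies Lemma \ref{2015.08.29.l2} (with $s$ viewed as a morphism from $Id_{\wh{n+i}}$ to $p_{\wh{n+i+1}}$ over $\wh{n+i}$) to reduce to $f^*(s)=(qq^i(f))^*(s)$, and then concludes by Lemma \ref{2015.08.24.l1}. Your writeup is merely a bit more explicit than the paper's in checking that $s\in\wt{Ob}$ is a morphism over $\wh{n+i}$ and in citing Lemma \ref{2015.08.22.l6}(2) for the identification $q(f,\wh{n+i})=qq^i(f)$, which the paper uses tacitly.
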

\begin{proof}
The morphisms involved in the proof can be seen on the following diagram
$$
\begin{CD}
\wh{m+i} @>qq^i(f)>> \wh{n+i}\\
@Vf^*(s)VV @VVsV\\
\wh{m+i+1} @>qq^{i+1}(f)>> \wh{n+i+1}\\
@Vp_{m+i+1,i+1}VV @VVp_{n+i+1,i+1}V\\
\wh{m} @>f>> \wh{n}
\end{CD}
$$
The morphism $s$ is a morphism from $Id_{\wh{n+i}}$ to $p_{\wh{n+i+1}}$ over $\wh{n+i}$. Therefore, we may apply Lemma \ref{2015.08.29.l2} obtaining the equality
$$f^*(s)=(qq^i(f))^*(s)$$
On the other hand by Lemma \ref{2015.08.24.l1} we have
$$qq^i(f)^*(s)=(x_0^{m+i},\dots,x_{m+i-1}^{m+i},\mbind(qq^i(f))(s(n+i))).$$
The lemma is proved. 
\end{proof}
Another operation that we would like to have an explicit form of is operation $\delta$. For a C-system $CC$ and an object $\Gamma$ in $CC$ such that $l(\Gamma)>0$ one defines $\delta_{\Gamma}$ as $s_{Id(\Gamma)}$ (cf. \cite[Section 3]{Csubsystems}). 
\begin{lemma}\llabel{2015.08.24.l5}
In $C(\RR)$ one has: 
$$\delta_{\wh{n}}=(x_0^{n},\dots,x_{n-1}^{n},x_{n-1}^n)$$
\end{lemma}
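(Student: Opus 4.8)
The plan is to read off $\delta_{\wh{n}}$ directly from its definition and then invoke Lemma \ref{2015.09.09.l1}, with essentially no further computation required. Recall that for an object $\Gamma$ with $l(\Gamma)>0$ one sets $\delta_{\Gamma}=s_{Id(\Gamma)}$; since $l(\wh{n})=n$ the hypothesis $l(\wh{n})>0$ amounts to $n>0$, which is exactly the range in which both sides of the claimed identity are defined.

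First I would write out the sequence representation of the morphism $Id_{\wh{n}}:\wh{n}\sr\wh{n}$. As recorded in the proof of Lemma \ref{2015.08.26.l1}, the identity of $T$ at $n$ is $\eta(n)=(x_0^n,\dots,x_{n-1}^n)$, using $x_i^n=\eta(n)(i)$; this is also the identity $Id_{\wh{n}}$ of $C(\RR)$. Thus $Id_{\wh{n}}=(f(0),\dots,f(n-1))$ with $f(i)=x_i^n$, and in particular its last entry is $f(n-1)=x_{n-1}^n$.

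Next I would apply Lemma \ref{2015.09.09.l1} to $f=Id_{\wh{n}}$, taking $m=n$ in the notation of that lemma (both domain and codomain equal $\wh{n}$). The lemma gives
$$s_{Id_{\wh{n}}}=(x_0^n,\dots,x_{n-1}^n,f(n-1))=(x_0^n,\dots,x_{n-1}^n,x_{n-1}^n),$$
and since $\delta_{\wh{n}}=s_{Id_{\wh{n}}}$ this is precisely the desired formula. There is no genuine obstacle here: the only point requiring a little care is the bookkeeping of indices, namely recognizing that specializing Lemma \ref{2015.09.09.l1} to the identity forces $m=n$ and that the repeated final coordinate $x_{n-1}^n$ arises simply as the last component $f(n-1)$ of the identity sequence.
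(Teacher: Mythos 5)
Your proof is correct and follows exactly the paper's own route: the paper's proof is the one-line observation that the claim follows from Lemma \ref{2015.09.09.l1} since $Id_{\wh{n}}=(x_0^{n},\dots,x_{n-1}^{n})$, which is precisely your argument with the index bookkeeping spelled out.
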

\begin{proof}
It follows from Lemma \ref{2015.09.09.l1} since $Id_{\wh{n}}=(x_0^{n},\dots,x_{n-1}^{n})$. 
\end{proof}
\begin{problem}
To construct a bijection
\begin{eq}\llabel{2015.08.24.eq9}
mb_{\RR}:\wt{Ob}(C(\RR))\sr \amalg_{n\in\nat} RR(n)
\end{eq}
\end{problem}
\begin{construction}\rm
\llabel{2015.08.22.constr3}
For $s:\wh{n}\sr \wh{n+1}$ define
$$mb_{\RR}(s)=(n,s(n))$$
To show that this is a bijection let us construct the inverse bijection. For $n\in\nn$ and $o\in RR(n)$ set
$$mb_{\RR}^!(n,o)=(x_0^n,\dots,x_{n-1}^n,o)$$
The fact that these functions are mutually inverse follows easily from Lemma \ref{2015.08.22.l7}.
\end{construction}

Our next goal is to describe operations $T'$, $\wt{T}'$, $S'$, $\wt{S}'$ and $\delta'$ obtained from operations $T$, $\wt{T}$, $S$, $\wt{S}$ and $\delta$ that were introduced at the end of Section 3 in \cite{Csubsystems} through transport by means of the bijection (\ref{2015.08.24.eq9}).

Let us first recall the definition of operations $T$, $\wt{T}$, $S$, $\wt{S}$ and $\delta$ associated with a general C-system $CC$. 
\begin{definition}
\llabel{2015.08.26.def1}
Let $CC$ be a C-system. We will write $Ob$ for $Ob(CC)$ and $\wt{Ob}$ for $\wt{Ob}(CC)$. 
\begin{enumerate}
\item Operation $T$ is defined on the set
$$T_{dom}=\{\Gamma,\Gamma'\in Ob\,|\,l(\Gamma)>0\,\,and\,\, \Gamma'>ft(\Gamma)\}$$
and takes values in $Ob$. For $(\Gamma,\Gamma')\in T_{dom}$ one defines
$$T(\Gamma,\Gamma')=p_{\Gamma}^*(\Gamma')$$
\item Operation $\wt{T}$ is defined on the set
$$\wt{T}_{dom}=\{\Gamma\in Ob, s\in \wt{Ob}\,|\,l(\Gamma)>0\,\,and\,\, \partial(s)>ft(\Gamma)\}$$
and takes values in $\wt{Ob}$. For $(\Gamma,s)\in \wt{T}_{dom}$ one defines
$$\wt{T}(\Gamma,s)=p_{\Gamma}^*(s)$$
\item Operation $S$ is defined on the set
$$S_{dom}=\{r\in \wt{Ob}, \Gamma\in Ob\,|\,\Gamma>\partial(r)\}$$
and takes values in $Ob$. For $(r,\Gamma)\in S_{dom}$ one defines
$$S(r,\Gamma)=r^*(\Gamma)$$
\item Operation $\wt{S}$ is defined on the set 
$$\wt{S}_{dom}=\{r,s\in \wt{Ob}\,|\,\partial(s)>\partial(r)\}$$
and takes values in $\wt{Ob}$. For $(r,s)\in \wt{S}_{dom}$ one defines
$$S(r,s)=r^*(s)$$
\item Operation $\delta$ is defined on the set 
$$\delta_{dom}=\{\Gamma\in Ob\,|\,l(\Gamma)>0\}$$
and takes values in $\wt{Ob}$. For $\Gamma\in \delta_{dom}$ one defines $\delta(\Gamma)$ as $s_{Id_{\Gamma}}$. 
\end{enumerate}
\end{definition}

Define, for any $Jf$-relative monad $\RR$ operations $\theta_{m,n}=\theta^{\RR}_{m,n}$ such that for $m,n\in\nat$, $n>m$ and $r\in RR(m)$, $s\in RR(n)$ one has
\begin{eq}
\llabel{2015.09.07.eq1}
\theta_{m,n}(r,s)=\mbind(qq^{n-m-1}(x_0^m,\dots,x_{m-1}^m,r))(s)=$$
$$\mbind(x_0^{n-1},\dots,x_{m-1}^{n-1},\iota_{m}^{n-m-1}(r),x_m^{n-1},\dots,x_{n-2}^{n-1})(s)
\end{eq}
\begin{theorem}
\llabel{2015.08.26.th1}
Let $Ob=Ob(C(\RR))$ and let $\wt{Ob}'$ be the right hand side of (\ref{2015.08.24.eq9}). One has:
\begin{enumerate}
\item Operation $T'$ is defined on the set
$$T'_{dom}=\{\wh{m},\wh{n}\in Ob\,|\,m>0\,\,and\,\,n>m-1\}$$
and is given by 
$$T'(\wh{m},\wh{n})=\wh{n+1}$$
\item Operation $\wt{T}'$ is defined on the set 
$$\wt{T}'_{dom}=\{\wh{m}\in Ob, (n,s)\in \wt{Ob}'\,|\,m>0\,\,and\,\,n+1>m-1\}$$
and is given by
$$\wt{T}'(\wh{m},(n,s))=(n+1,\partial_n^{m-1}(s))$$
\item Operation $S'$ is defined on the set
$$S'_{dom}=\{(m,r)\in \wt{Ob}',\wh{n}\in Ob\,|\,n>m+1\}$$
and is given by
$$S'((m,r),\wh{n})=\wh{n-1}$$
\item Operation $\wt{S}'$ is defined on the set 
$$\wt{S}'_{dom}=\{(m,r)\in\wt{Ob}',(n,s)\in \wt{Ob}'\,|\,n>m\}$$
and is given by
$$\wt{S}'((m,r),(n,s))=\theta_{m,n}(r,s)$$
\item Operation $\delta'$ is defined on the subset
$$\delta'_{dom}=\{\wh{n}\in Ob\,|\,n>0\}$$
and is given by
$$\delta'(\wh{n})=(n,x_{n-1}^n)$$
\end{enumerate}
\end{theorem}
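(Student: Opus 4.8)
The plan is to treat the five operations one at a time, in each case unwinding the definition of the corresponding unprimed operation from Definition \ref{2015.08.26.def1} and transporting its $\wt{Ob}$-slots along the bijection $mb_{\RR}$ of Construction \ref{2015.08.22.constr3}, while leaving the $Ob$-slots as objects $\wh{k}$ of $C(\RR)$. Concretely, wherever the unprimed operation reads or returns a section $s:\wh{k}\sr\wh{k+1}$, the primed operation reads or returns its image $mb_{\RR}(s)=(k,s(k))$, and $mb_{\RR}^{-1}(k,o)=(x_0^k,\dots,x_{k-1}^k,o)$. Before computing, I would record the translation of the domain conditions: for a section $\wh{k}\sr\wh{k+1}$ one has $\partial(s)=\wh{k+1}$, and $\Gamma'>\Gamma$ between objects is just $l(\Gamma')>l(\Gamma)$, so the defining conditions of $T_{dom},\wt{T}_{dom},S_{dom},\wt{S}_{dom},\delta_{dom}$ become exactly the numerical inequalities in the statement. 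Each case then reduces to evaluating a pullback $p_{\Gamma}^*(-)$ or $r^*(-)$, or the diagonal $s_{Id_{\Gamma}}$, by the explicit formulas established earlier in this section.

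For the two object-valued operations $T'$ and $S'$ the computation is immediate. Since $T(\wh{m},\wh{n})=p_{\wh{m}}^*(\wh{n})$ with $p_{\wh{m}}:\wh{m}\sr\wh{m-1}$, I would write $\wh{n}=\wh{(m-1)+i}$ with $i=n-m+1$ and apply Lemma \ref{2015.08.22.l6}(2), which gives $p_{\wh{m}}^*(\wh{n})=\wh{m+i}=\wh{n+1}$. Likewise, for $S((m,r),\wh{n})=r^*(\wh{n})$ with section $r:\wh{m}\sr\wh{m+1}$ and $\wh{n}=\wh{(m+1)+i}$, $i=n-m-1$, the same lemma yields $r^*(\wh{n})=\wh{m+i}=\wh{n-1}$.

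The section-valued operations $\wt{T}'$ and $\wt{S}'$ are where the content lies, and both rest on Lemma \ref{2015.08.29.l1}, which computes $f^*$ of a section as a sequence whose only nontrivial entry is $\mbind(qq^i(f))$ applied to the top value of the section. For $\wt{T}'$ I take $f=p_{\wh{m}}$ and a section over $\wh{m-1}$, written $\wh{n}\sr\wh{n+1}$ with $i=n-m+1$; the last component is then $\mbind(qq^i(p_{\wh{m}}))(s(n))$, and since $p_{\wh{m}}=L(\iota_{m-1}^{1})$, Lemma \ref{2015.08.26.l3a} identifies $qq^i(p_{\wh{m}})$ with $L(\partial_n^{m-1})$. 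Using the coercion $\mbind(L(g))=RR(g)$ this collapses to $\partial_n^{m-1}(s(n))$, and as $m+i=n+1$, applying $mb_{\RR}$ produces $(n+1,\partial_n^{m-1}(s))$. For $\wt{S}'$ I instead take $f=r$, the section $\wh{m}\sr\wh{m+1}$, whose sequence form is exactly $(x_0^m,\dots,x_{m-1}^m,r)$, and $i=n-m-1$; Lemma \ref{2015.08.29.l1} gives last component $\mbind(qq^{n-m-1}(x_0^m,\dots,x_{m-1}^m,r))(s(n))$, which is by definition (\ref{2015.09.07.eq1}) precisely $\theta_{m,n}(r,s)$, sitting in degree $m+i=n-1$. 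Finally, $\delta'$ needs no new work: Lemma \ref{2015.08.24.l5} already gives $\delta_{\wh{n}}=(x_0^n,\dots,x_{n-1}^n,x_{n-1}^n)$, whose $mb_{\RR}$-image is $(n,x_{n-1}^n)$.

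The main obstacle is not conceptual but is the careful bookkeeping of the shift index $i$ in each case together with consistent handling of the coercions. In particular the crux is the $\wt{T}'$ computation, where one must recognize $p_{\wh{m}}$ as $L(\iota_{m-1}^{1})$, invoke Lemma \ref{2015.08.26.l3a} to turn $qq^i$ of it into the face operator $L(\partial_n^{m-1})$ with exactly the right sub- and superscripts, and then pass through $\mbind(L(g))=RR(g)$ so that the abstract pullback becomes the concrete face map $\partial_n^{m-1}$. Once the indices appearing in these two lemmas are matched against the degrees $m$, $n$ and the shift $i$, the remaining verifications are mechanical.
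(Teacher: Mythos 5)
Your proposal is correct and follows essentially the same route as the paper: transport the domains via $\partial(mb_{\RR}^!(k,o))=\wh{k+1}$ and the length ordering, use Lemma \ref{2015.08.22.l6}(2) for $T'$ and $S'$, combine Lemma \ref{2015.08.29.l1} with Lemma \ref{2015.08.26.l3a} for $\wt{T}'$, match Lemma \ref{2015.08.29.l1} with $i=n-m-1$ against the definition (\ref{2015.09.07.eq1}) of $\theta_{m,n}$ for $\wt{S}'$, and apply Lemma \ref{2015.08.24.l5} for $\delta'$. Your index bookkeeping ($i=n-m+1$ for $\wt{T}'$, $i=n-m-1$ for $\wt{S}'$) and the coercion step $\mbind(L(g))=RR(g)$ agree with the paper's argument.
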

\begin{proof}
We have:
\begin{enumerate}
\item Operation $T'$ is the same as operation $T$ for $C(\RR)$ since $\wt{Ob}$ is not involved in it. The form of $T'_{dom}$ is obtained by unfolding definitions and the formula for the operation itself follows from Lemma \ref{2015.08.22.l6}(2).
\item Operation $\wt{T}'$ is defined on the set of pairs $(\wh{m}\in Ob, (n,s)\in \wt{Ob}')$ such that $m>0$ and $\partial(mb_{\RR}^!(n,s))>m-1$. Since $\partial(mb_{\RR}^!(n,s))=n+1$ we obtain the required domain of definition. The formula by the operation itself is obtained immediately by combining Lemma \ref{2015.08.29.l1} and Lemma \ref{2015.08.26.l3a}.
\item Operation $S'$ is defined on the set of pairs $((m,r)\in \wt{Ob}',\wh{n}\in Ob)$ where $n>\partial(mb_{\RR}^!(m,r))$. Since $\partial(mb_{\RR}^!(m,r))=m+1$ we obtain the required domain of definition. The operation itself is given by
$$S'((m,r),n)=(mb_{\RR}^!(m,r))^*(\wh{n})=(x_0^m,\dots,x_{m-1}^m,r)^*(\wh{n})=\wh{n+m-(m+1)}=\wh{n-1}$$
\item Operation $\wt{S}'$ is defined on the set of pairs $(m,r),(n,s)\in \wt{Ob}'$ 
such that $\partial(mb_{\RR}^!(n,s))>\partial(mb_{\RR}^!(m,r))$ which is equivalent to $n>m$. The formula for the operation itself is obtained immediately by combining Lemma \ref{2015.08.29.l1} with $i=n-m-1$ and  Lemma \ref{2015.08.28.l1}. 
\item Operation $\delta'$ is defined on the subset $\wh{n}\in Ob$ such that $n>0$ and is given by
$$\delta'(\wh{n})=mb_{\RR}(\delta(\wh{n}))=mb_{\RR}((x_0^n,\dots,x_{n-1}^n,x_{n-1}^n))=(n,x_{n-1}^n)$$
\end{enumerate}
The theorem is proved. 
\end{proof}
The length function on $Ob=\nat$ is the identity. Of  the remaining three operations that define the pre-B-system structure on the pair of sets $(Ob,\wt{Ob}')$ - $pt$, $ft$ and $\partial'$, the first two are described above and $\partial'$ is given by $\partial'((m,r))=m+1$. 

This completes the description of the pre-B-system structure on $(Ob,\wt{Ob}')$ that is obtained by the transport of structure from the standard pre-B-system structure on $(Ob,\wt{Ob})$ by means of the pair of isomorphisms $Id$ and $mb_{\RR}$.
\begin{remark}\rm
\llabel{2015.08.29.rem2}
Conjecturally, a C-system can be reconstructed (up to an isomorphism) from the sets $Ob$ and $\wt{Ob}$ equipped with the length function $l:Ob\sr\nn$, the distinguished object $pt\in Ob$ and operations $ft, \partial, T,\wt{T},S,\wt{S}$ and $\delta$. Combining this conjecture with Theorem \ref{2015.08.26.th1} we conclude that the C-system $C(\RR)$ and, therefore, the relative monad $\RR$, can be reconstructed from the sets $RR(n)$ with distinguished elements $x^n_i$ and equipped with operations $\partial_n^i$ and $\theta_{m,n}:RR(m)\times RR(n)\sr RR(n-1)$ for $n>m$. 

Using Remark \ref{2016.01.17.rem1} this can be compared with the assertion of \cite[Theorem 3.3]{FPT} that the category of abstract clones is equivalent to the category of substitution systems of \cite[Definition 3.1]{FPT}. In such a comparison the operation $\zeta$ of substitution systems of the form $RR(n+1)\times RR(n)\sr RR(n)$ is the same as the operation $(s,r)\mapsto \theta_{n,n+1}(r,s)$. 
\end{remark}
\begin{remark}\rm
\llabel{2015.08.29.rem1}
Let $lRR$ be the disjoint union of $RR(n)$ for all $n$. Then we can sum up all of the operations that we need to consider as follows: 
\begin{enumerate}
\item a function $l:lRR\sr\nat$,
\item a function $\eta:\nat\sr lRR$ that takes $n$ to $x^n_0=\eta(n)(0)$,
\item a function $\partial:\{r\in lRR, i\in\nat\,|\, l(r)\ge i\}\sr lRR$,
\item a function $\theta:\{r,s\in lRR,\,|\,l(r)>l(s)\}\sr lRR$,
\end{enumerate}
such that
\begin{enumerate}
\item for all $n\in\nat$, $l(\eta(n))=n+1$,
\item for all $r\in lRR$, $i\in\nat $ such that $l(r)\ge i$, $l(\partial(r,i))=l(r)+1$,
\item for all $r,s\in lRR$ such that $l(s)>l(r)$ one has $l(\sigma(r,s))=l(s)-1$.
\end{enumerate}
It should be possible to describe, by a collection of further axioms on these operations, a full subcategory in the category whose objects are sets $lRR$ with operations of the form $l,\eta,\partial$ and $\theta$ that is equivalent to the category of $Jf$-relative monads or, equivalently, the category of Lawvere theories or Fiore-Plotkin-Turi substitution algebras. 
\end{remark}
\begin{remark}\rm
\llabel{2015.08.29.rem1b}
It seems at first unclear why it should be possible to realize the action of the symmetric group on $RR(n)$ using operations of Remark \ref{2015.08.29.rem2} since they all seem to respect, in some sense, the linear ordering of the sets $stn(n)$. 

In the substitution notation of Remark \ref{2015.08.18.rem1}, given $r$ in $RR(m)$ and $E$ in $RR(n)$, 
$$\theta_{m,n}(r,E)=E[r/x_m,x_m/x_{m+1},\dots,x_{n-2}/x_{n-1}],$$
i.e., the operation $\theta_{m,n}$ corresponds to the substitution of an expression in variables $x_0,\dots,x_{m-1}$ for the variable $x_m$ in an expression in variables $x_0,\dots,x_n$ followed by a downshift of the indexes of the variables with the higher index. 

The operation $\partial_n^i$ and the constants $x_n:=x^{n+1}_{n}$ are similarly defined in terms of linear orderings.

To see how it is, nevertheless, possible to realize, for example, the permutation of $x_0$ and $x_1$ consider the following. First let, for all $i,n\in\nat$, 
$$\iota_n^i=\partial_{n+i-1}^{n+i-1}\circ \dots\circ \partial_n^n:RR(n)\sr RR(n+i)$$
Then define for all $i,n\in\nat$, $n\ge i+1$ an element $x^n_i\in RR(n)$ by the formula
$$x^n_i=\iota_{i+1}^{n-i-1}(x_i)$$
such that, in particular, $x^{n+1}_{n}=x_n$.

Define now a function $\psi:RR(2)\sr RR(2)$ by the formula
$$\psi=\partial^0_2\circ \partial^0_3\circ \theta_{3,4}(x_0^3,-)\circ \theta_{2,3}(x_1^2,-)$$
One can verify that for any $Jf$-relative monad $RR$, $\psi=\sigma$ where $\sigma$ is the permutation of $0$ and $1$ in $stn(2)$. 

In the substitution notation this can be seen as follows:
$$\psi(E(x^2_0,x_1^2))=\theta_{2,3}(x_1^2,\theta_{3,4}(x_0^3,\partial^0_3(\partial^0_2(E(x^2_0,x_1^2)))))=\theta_{2,3}(x_1^2,\theta_{3,4}(x_0^3,\partial^0_3(E(x^3_1,x^3_2))))=$$
$$\theta_{2,3}(x_1^2,\theta_{3,4}(x_0^3,E(x^4_2,x^4_3)))=\theta_{2,3}(x_1^2,E(x^3_2,x^3_0))=E(x^2_1,x^2_0)$$
\end{remark}

\subsection{The C-system $C(\RR,\LM)$.}
\llabel{CRRLM}

Modules (actually left modules) over relative monads were introduced in \cite[Definition 9]{Ahrens2016}. One can observe by direct comparison of unfolded definitions that there is a bijection between the set of modules over a relative monad $\RR$ with values in a category $E$ and the set of functors from the Kleisli category $K(\RR)$ of $\RR$ introduced in \cite[p.8]{ACU2} (see also \cite[Constr. 2.9]{LandJf}) to $E$. Whether this bijection is the identity bijection or not depends on how the expressions such as ``collection of data'' or ``family of functions'' are translated into the formal constructions of set theory. We assume that they have been translated in a such a way that this bijection is the identity and left modules over $\RR$ with values in $E$ are actually and precisely the same as (covariant) functors from $K(\RR)$ to $E$. 

In this paper we are interested in the $Jf$-relative monads $\RR$. The corresponding Kleisli categories are the categories opposite to the categories $C(\RR)$ underlying the C-systems considered above. Therefore, left modules over a $Jf$-monad $\RR$ with values in $Sets$ are the presheaves on $C(RR)$, i.e., the contravariant functors from $C(RR)$ to $Sets$. 

Let $\LM=(LM,LM_{Mor})$ be such a presheaf.

The morphism component $LM_{Mor}$ of $\LM$ is a function that sends a morphism $f$ from $\wh{m}$ to $\wh{n}$ in $C(\RR)$ to a function $LM_{Mor}(f)\in Fun(LM(\wh{n}),LM(\wh{m}))$, i.e., we have for each $m,n\in\nat$ a function
$$R(n,m)\sr Fun(LM(\wh{n}),LM(\wh{m}))$$
We will use this function as a coercion so that, for $f\in RR(n,m)$ and $E\in LM(\wh{n})$ the expression $f(E)$ is assumed to be expanded into $LM_{Mor}(f)(E)$ when needed. 
\begin{remark}\rm
\llabel{2015.08.18.rem1}
If we think of $E\in LM(\wh{n})$ as of an expression in variables $0,\dots,n-1$ then the action of $RR(n,m)$ on $LM(\wh{n})$ can be thought of as the substitution. This analogy can be used to introduce the notation when for $f=(f(0),\dots,f(n-1))\in RR(n,m)$ and $E\in LM(\wh{n})$ one writes $f(E)$ as 
$$f(E)=E[f(0)/0,\dots,f(n-1)/n-1]$$
For example, in this notation we have 
$$\partial^i_n(E)=E[0/0,\dots,i-1/i-1,i+1/i,\dots,n/n-1]$$
Similarly, for $E\in LM(\wh{n+2})$ one has
$$\sigma^i_n(E)=E[0/0,\dots,i/i,i/i+1,\dots,n/n+1]$$
and $\iota_n^i(E)$ is ``the same expression'' but considered as an expression of $n+i$ variables.
\end{remark}
\begin{example}\rm
\llabel{2015.09.07.rem3}
An important example of $\LM$ is given by the functor defined on objects by $\wh{n}\mapsto RR(n)$ and on morphisms by 
$$f\mapsto (s\mapsto\mbind(f)(s))$$
for $f:\wh{m}\sr \wh{n}$ and $s\in RR(n)$. We will denote this functor by the same symbol $\RR$ as the underlying $Jf$-relative monad. 

This functor is isomorphic to the (contravariant) functor represented by the object $\wh{1}$ but it is not equal to this functor since the set of elements of the form  $((\wh{n},\wh{1}),r')$ where $r'\in RR(1,n)$ is isomorphic but not equal to the set $RR(n)$.
\end{example}

Let $C(\RR,\LM)=C(\RR)[\LM]$ be the $\LM$-extension of the C-system $C(\RR)$. The role of these C-systems in the theory of type theories is that the term C-systems of the raw syntax of dependent type theories are of this form and therefore the term C-systems of dependent type theories are regular sub-quotients of such C-systems and can be studied using the description of the regular sub-quotients given in \cite{Csubsystems}.  

By construction,
\begin{eq}\llabel{2016.01.21.eq3}
Ob(C(\RR,\LM))=\amalg_{n\in\nat}Ob_n(\RR,\LM)
\end{eq}
where 
$$Ob_n(\RR,\LM)=\LM(\wh{0})\times\dots\times\LM(\wh{n-1})$$
and therefore objects of $C(\RR,\LM)$ are pairs of the form $(n,\Gamma)$ where $\Gamma$ is a sequence $(T_0,\dots,T_{n-1})$ where $T_i\in LM(\wh{i})$. While the number $n$ in a pair $(n,\Gamma)$ is an object of $C(\RR)$ we will not add the ${\,\,\wh{}\,\,}$ diacritic to it since no confusion of the kind possible with objects of $C(RR)$ and objects of $F$ can arise. We may sometimes omit $n$ from our notation altogether since it can be recovered from $\Gamma$. Similarly, while the morphisms of $C(\RR,\LM)$ are given by iterated pairs of the form $(((m,\Gamma),(n,\Gamma')),((\wh{m},\wh{n}),f))$ where $f\in \RR(n,m)$ we will sometimes write them as $f:(m,\Gamma)\sr (n,\Gamma')$ or $f:\Gamma\sr \Gamma'$ or even just as $f$. 

Let us also recall that for two objects $X=(m,(T_0,\dots,T_{m-1})))$ and $Y=(n+1,(T'_0,\dots,T'_{n}))$ and a morphism $f:X\sr ft(Y)$ the object $f^*(Y)$ is given by the formula
\begin{eq}\llabel{2015.09.09.eq3old}
f^*(Y)=(m+1,(T_0,\dots,T_{m-1},f(T'_{n})))
\end{eq}
and the morphism $q(f,Y):f^*(Y)\sr Y$ by the formula $q(f,Y)=qq(f)$. 
\begin{lemma}
\llabel{2015.08.26.l8}
Let $X=(m,(T_0,\dots,T_{m-1}))$ and $Y=(n,(T_0,\dots,T_{n-2},T))$ where $m>n-1$. Then one has
$$p_{Y}^*(X)=(m+1,(T_0,\dots,T_{n-2},T,\partial_{n-1}^{n-1}(T_{n-1}),\dots,\partial_{m-1}^{n-1}(T_{m-1})))$$
\end{lemma}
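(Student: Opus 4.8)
The plan is to realise $p_Y^*(X)$ as an instance of the general pullback formula of Lemma~\ref{2016.01.31.l1} for the extension $C(\RR)[\LM]$, applied to the $p$-morphism $f=p_Y$, and then to evaluate the resulting base $q$-morphisms using the explicit computations of Section~\ref{CRR}.

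First I would fix the codimension. Since $X$ is over $ft(Y)$ (the two objects share the initial segment $(T_0,\dots,T_{n-2})$ displayed in the statement), we have $ft(Y)=ft^{i}(X)$ with $i=l(X)-l(ft(Y))=m-(n-1)=m-n+1$, so by the index-dropping convention $p_Y^*(X)=p_Y^*(X,i)$. The underlying morphism of $p_Y$ in $C(\RR)$ is $p_{\wh n}=L(\iota_{n-1}^1)$.

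Next, apply Lemma~\ref{2016.01.31.l1} with the pulled-back object being $X=(\wh m,(T_0,\dots,T_{m-1}))$ (so $ly=m$ and $T'_j=T_j$), the morphism $f=p_{\wh n}$ whose domain $Y$ has length $lx=n$ and sequence $(T_0,\dots,T_{n-2},T)$, and codimension $i=m-n+1$. For the object component, Lemma~\ref{2015.08.22.l6}(2) with $\wh m=\widehat{(n-1)+i}$ gives $p_{\wh n}^*(\wh m,i)=\widehat{m+1}$, which accounts for the leading $m+1$. For the sequence, the lemma reproduces the domain's entries $(T_0,\dots,T_{n-2},T)$ in positions $0,\dots,n-1$ and then appends, for $k=0,\dots,i-1$, the entries $\LM(q(p_{\wh n},\widehat{(n-1)+k},k))(T_{(n-1)+k})$, using that $ly-i=n-1$ so that $ft^{i-k}(\wh m)=\widehat{(n-1)+k}$ and the argument $T'_{ly-i+k}$ equals $T_{(n-1)+k}$. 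Then I would simplify the $q$-morphisms: Lemma~\ref{2015.08.22.l6}(2) gives $q(p_{\wh n},\widehat{(n-1)+k},k)=qq^k(p_{\wh n})$, and since $p_{\wh n}=L(\iota_{n-1}^1)$, Lemma~\ref{2015.08.26.l3a} gives $qq^k(L(\iota_{n-1}^1))=L(\partial^{n-1}_{(n-1)+k})$. Hence the appended entry is $\LM(L(\partial^{n-1}_{(n-1)+k}))(T_{(n-1)+k})=\partial^{n-1}_{(n-1)+k}(T_{(n-1)+k})$ in the coercion notation of Remark~\ref{2015.08.18.rem1}; setting $p=n+k$ so that $p$ runs through $n,\dots,m$, this reads $\partial^{n-1}_{p-1}(T_{p-1})$, which is exactly the tail $(\partial_{n-1}^{n-1}(T_{n-1}),\dots,\partial_{m-1}^{n-1}(T_{m-1}))$ of the claimed sequence.

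The main obstacle is purely the index bookkeeping: one must keep the shift $m-i=n-1$ straight inside the subscripts of $T'_{ly-i+k}$ and inside the source size of each face operator, checking that $\partial^{n-1}_{(n-1)+k}$ indeed sends $\LM(\widehat{(n-1)+k})$ to $\LM(\widehat{n+k})$ so that the $p$-th entry lands in $\LM(\wh p)$ as required. No genuinely new argument is needed beyond the already-established Lemmas~\ref{2016.01.31.l1}, \ref{2015.08.22.l6}, and~\ref{2015.08.26.l3a}.
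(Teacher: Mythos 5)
Your proposal is correct and follows essentially the same route as the paper's own proof: both apply Lemma \ref{2016.01.31.l1} with the same role assignment ($lx=n$, $ly=m$, codimension $i=m-n+1$), then identify the base $q$-morphisms as $qq^k(p_{\wh{n}})$ via Lemma \ref{2015.08.22.l6}(2) and convert them to face maps $L(\partial^{n-1}_{(n-1)+k})$ via Lemma \ref{2015.08.26.l3a}. Your index bookkeeping, including the check that the appended entry at position $n+k$ lies in $\LM(\widehat{n+k})$, matches the paper's computation exactly.
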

\begin{proof}
We want to apply Lemma \ref{2016.01.31.l1}. We have $lx=n$, $ly=m$. The morphism $p_Y$ is of the form
$$p_Y=p_{\wh{n}}:(n,(T_0,\dots,T_{n-2},T))\sr (n-1,(T_0,\dots,T_{n-2}))$$
and 
$$(n-1,(T_0,\dots,T_{n-2}))=ft^i((m,(T_0,\dots,T_{m-1})))$$
where $i=m-n+1$. Therefore, 
$$p_Y^*(X)=p_Y^*(X,i)=$$$$(p_{\wh{n}}^*(\wh{m},i), (T_0,\dots,T_{n-2},T,q(p_{\wh{n}},ft^i(\wh{m}),0)(T_{m-i}),\dots,q(p_{\wh{n}},ft(\wh{m}),i-1)(T_{m-1}))=$$
$$(m+1,(T_0,\dots,T_{n-2},T,\iota_{n-1}^1(T_{n-1}),\dots,qq^{i-1}(L(\iota_{n-1}^1))(T_{m-1})))=$$
$$(m+1,(T_0,\dots,T_{n-2},T,\partial_{n-1}^{n-1}(T_{n-1}),\dots,\partial_{m-1}^{n-1}(T_{m-1})))$$
where the third equality is by Lemma \ref{2015.08.22.l6}(2) and the fourth one by Lemma \ref{2015.08.26.l3a}. 
\end{proof}

\comment{
\begin{remark}\rm
\llabel{2015.08.22.rem2.from.old}
%??? should E be from LM(\wh{n}) or LM(x_1,\dots,x_n)? Probably it does not matter as it leads to isomorphic C-systems
%
There is another construction of a pre-category from $(\RR,\LM)$ which takes as an additional parameter a countable set $Var$ (with decidable equality) which is called the set of variables. Let $F_n(Var)$ be the set of sequences of length $n$ of pair-wise distinct elements of $Var$. Define the pre-category $C(\RR,\LM,Var)$ as follows. The set of objects of $C(\RR,\LM,Var)$ is 
$$Ob(C(\RR,\LM,Var))= \amalg_{n\in\nat} \amalg_{(x_0,\dots,x_{n-1})\in F_n(Var)} LM(\wh{0})\times\dots\times LM(\wh{n})$$
For compatibility with the traditional type theory we will write the elements of $Ob(C(\RR,\LM,X))$ as sequences of the form $x_0:E_1,\dots,x_{n-1}:E_{n-1}$. The set of morphisms is given by
$$Mor(C(\RR,\LM,Var))=\amalg_{\Gamma,\Gamma'\in Ob}R(l_f(\Gamma'),l_f(\Gamma))$$
The composition is defined in such a way that the projection 
$$(x_0:E_0,\dots,x_{n-1}:E_{n-1})\mapsto (E_0,E_1,\dots,E_{n-1})$$
is a functor from $C(\RR,\LM,Var)$ to $C(\RR,\LM)$. 

This functor is clearly an equivalence but not an isomorphism of categories. 

There are an obvious object $pt$, function $ft$ and $p$-morphisms. 

What is unclear is how to define operation $f^*$ on objects such as to satisfy the first parts of the sixth and seventh conditions in the definition \cite[Definition 2.1]{Csubsystems} of a C0-system. For $\Gamma'=(x'_0:T'_0,\dots, x'_{m-1}:T'_{m-1})$, $\Gamma=(y_0:T_0,\dots,y_n:T_n)$ and $f:\Gamma'\sr ft(\Gamma)$ the object $f^*(\Gamma)$ must be of the form $(x'_0:T_0',\dots,x'_{m-1}:T'_{m-1}, z:T'_m)$ in order to satisfy the equation $ft(f^*(\Gamma))=\Gamma'$ and we should have $z\ne x_0',\dots,x'_{m-1}$.

Consider the case when $LM(\wh{i})=unit$ for all $i$. Then the problem is to construct functions 
$$z_{n,m}:F_m(Var)\times F_{n+1}(Var)\times R(n,m)\sr Var$$
such that $z_{n,m}(x',y,f)$ does not occur in $x'$ and such that
\begin{eq}
\llabel{2015.08.22.eq1b}
z_{n,m}((x_0,\dots,x_{n-1}),(x_0,\dots,x_{n-1},x_n),\eta_n)=x_n
\end{eq}
and for all $x'\in F_{m}$, $x''\in F_{k}$, $y\in F_{n+1}$, $f\in R(n,m)$ and $g\in R(m,k)$ one has
\begin{eq}
\llabel{2015.08.22.eq2b}
z_{n,k}(x'',y,g\hc f)=z_{m,k}(x'',(x'_0,\dots,x'_{m-1},z_{m,n}(x',y,f)),g)
\end{eq}
I do not know whether it is possible to construct a function $z$ satisfying these two equations for a general $R$.
\end{remark}
}

\begin{lemma}
\llabel{2015.08.22.l5}
A morphism $f:X\sr Y$, where $l(Y)=n+1$ and $f\in R(n+1,n)$ belongs to $\wt{Ob}(C(\RR,\LM))$ if and only if $X=ft(Y)$ and $f(i)=x^n_i$ for $i=0,\dots,n-1$. 
\end{lemma}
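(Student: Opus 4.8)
The plan is to unfold the definition of $\wt{Ob}(C(\RR,\LM))$ and reduce its two defining conditions to conditions on the underlying data in $C(\RR)$, where the actual computation has already been done in Lemma \ref{2015.08.22.l7}.

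First I would recall that $s\in\wt{Ob}(CC)$ means that $s$ is a morphism of the form $ft(W)\sr W$ with $l(W)>0$ and $s\circ p_W=Id_{ft(W)}$. Applied to our $f:X\sr Y$, whose codomain is $Y$, the requirement that $f$ have the shape $ft(W)\sr W$ forces $W=Y$ and hence $X=ft(Y)$; this is exactly the first of the two asserted conditions, and it is clearly necessary. The side condition $l(W)>0$ is automatic here since $l(Y)=n+1$. Thus membership $f\in\wt{Ob}(C(\RR,\LM))$ is equivalent to the conjunction of $X=ft(Y)$ and the equation $f\circ p_Y=Id_{ft(Y)}=Id_X$ holding in $C(\RR,\LM)$.

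Next I would transport this last equation down to $C(\RR)$. From Construction \ref{2016.01.19.constr1} a morphism of $C(\RR,\LM)=C(\RR)[\LM]$ is an iterated pair recording its domain, its codomain, and an underlying morphism of $C(\RR)$; composition and identities act on that underlying morphism, and $p_Y$ has underlying morphism $p_{\wh{n+1}}$. Consequently two morphisms of $C(\RR,\LM)$ with the same domain and codomain are equal if and only if their underlying morphisms in $C(\RR)$ agree (equivalently, one may invoke that $tr_F$ of Lemma \ref{2016.01.19.l2} is faithful and, by Lemma \ref{2015.08.22.l4}, a homomorphism of C-systems, so it preserves $p$-morphisms and identities). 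Applying this, the equation $f\circ p_Y=Id_X$ holds in $C(\RR,\LM)$ if and only if $f\circ_{C(\RR)} p_{\wh{n+1}}=Id_{\wh{n}}$ holds in $C(\RR)$, where $f$ is now read as the morphism $\wh{n}\sr\wh{n+1}$ given by $(f(0),\dots,f(n))\in R(n+1,n)$.

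Finally I would invoke Lemma \ref{2015.08.22.l7}, which states precisely that for $f=(f(0),\dots,f(n)):\wh{n}\sr\wh{n+1}$ one has $f\circ p_{\wh{n+1}}=Id_{\wh{n}}$ if and only if $f(i)=x_i^n$ for $i=0,\dots,n-1$. Combining this with the reduction above yields the claimed equivalence. I do not expect a serious obstacle: the entire content is in correctly matching the abstract definition of $\wt{Ob}$ in the presheaf extension against the concrete descriptions of $ft$, the $p$-morphisms, and composition in $C(\RR)[\LM]$, after which Lemma \ref{2015.08.22.l7} supplies the computation verbatim.
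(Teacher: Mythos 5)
Your proof is correct and follows essentially the same route as the paper: the paper's proof simply cites Lemma \ref{2015.08.26.l1}, of which your key reference Lemma \ref{2015.08.22.l7} is the verbatim $C(\RR)$-level restatement, so your argument amounts to spelling out the unfolding of $\wt{Ob}$ and the transport along $tr_{\LM}$ that the paper treats as immediate.
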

\begin{proof}
It follows immediately from Lemma \ref{2015.08.26.l1}.
\end{proof}
The following analog of Lemma \ref{2015.09.09.l1} for the C-system $C(\RR,\LM)$ provides us with the explicit form of the operation $f\mapsto s_f$.
\begin{lemma}
\llabel{2015.09.09.l2}
Let $f:X\sr Y$, $f=(f(0),\dots,f(n-1))$ where $n>0$. Then $s_f:X\sr (ft(f))^*(Y)$,
\begin{eq}\llabel{2015.09.09.eq1}
s_f=(x_0^{m},\dots,x_{m-1}^{m},f(n-1))
\end{eq}
where $ft(f)=f\circ p_Y$ and $m=l(X)$. 
\end{lemma}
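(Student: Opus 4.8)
The plan is to reduce the assertion to its already-established counterpart for $C(\RR)$, namely Lemma \ref{2015.09.09.l1}, by transporting the defining properties of $s_f$ along the canonical homomorphism $tr:C(\RR,\LM)\sr C(\RR)$ of Lemma \ref{2015.08.22.l4}. Recall from \cite[Definition 2.3]{Csubsystems} that $s_f:X\sr (ft(f))^*(Y)$ is the unique element of $\wt{Ob}(C(\RR,\LM))$ with domain $X=ft((ft(f))^*(Y))$ satisfying $s_f\circ q(ft(f),Y)=f$, where $ft(f)=f\circ p_Y$; equivalently it is characterized by $s_f\circ p_{(ft(f))^*(Y)}=Id_X$ together with that equation.

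First I would check that $tr$ carries these two conditions to the conditions characterizing $s_{tr(f)}$ in $C(\RR)$. Since $tr$ is a homomorphism it preserves $ft$, the $p$-morphisms, the operation $f^*$ and the $q$-morphisms (cf. Lemma \ref{2015.09.03.l2}), and on underlying morphisms it is the identity, so that $tr(f)=(f(0),\dots,f(n-1))$. Consequently $tr(ft(f))=tr(f)\circ p_{\wh{n}}=ft(tr(f))$, the codomain satisfies $tr((ft(f))^*(Y))=(tr(ft(f)))^*(\wh{n})=\wh{m+1}$, and $tr(q(ft(f),Y))=q(ft(tr(f)),\wh{n})$. Applying $tr$ to $s_f\circ p_{(ft(f))^*(Y)}=Id_X$ and to $s_f\circ q(ft(f),Y)=f$ therefore produces exactly the two equations defining $s_{tr(f)}$ in $C(\RR)$. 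Because $tr$ is fully faithful (Lemma \ref{2016.01.19.l2}), in particular faithful, and $s_{tr(f)}$ is unique, this forces $tr(s_f)=s_{tr(f)}$.

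Second, Lemma \ref{2015.09.09.l1} evaluates $s_{tr(f)}=(x_0^m,\dots,x_{m-1}^m,f(n-1))$ with $m=l(X)$, using that $tr(f)(n-1)=f(n-1)$. Since in the statement $s_f$ is identified with its underlying morphism $tr(s_f)\in RR(m+1,m)$, this is precisely the desired formula.

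I expect the only delicate point to be the bookkeeping in the second paragraph: one must verify that $tr$ sends each ingredient — $ft(f)$, the codomain object $(ft(f))^*(Y)$, and the morphism $q(ft(f),Y)$ — to the corresponding ingredient built from $tr(f)$, so that the defining equations transport cleanly; all of this is immediate from preservation of $p$, $ft$, $f^*$ and $q$. As a wholly equivalent alternative that avoids invoking Lemma \ref{2015.09.09.l1} as a black box, one can argue directly inside $C(\RR,\LM)$, mirroring that proof: Lemma \ref{2015.08.22.l5} shows the underlying morphism of the section $s_f$ has the form $(x_0^m,\dots,x_{m-1}^m,s)$ for some $s\in RR(m)$; substituting $q(ft(f),Y)=qq((f(0),\dots,f(n-2)))$ — obtained from Lemma \ref{2015.07.24.l1}(1), Lemma \ref{2016.01.15.l3} and (\ref{2015.08.26.eq9}) — into $s_f\circ q(ft(f),Y)=f$ and evaluating the final coordinate by Lemma \ref{2016.01.15.l4} and (\ref{2015.08.24.eq5}) pins down $s=f(n-1)$.
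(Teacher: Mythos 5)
Your proposal is correct and follows essentially the same route as the paper: the paper's proof likewise notes that $s_f$ has the stated domain and codomain and then reduces the identity to Lemma \ref{2015.09.09.l1} by applying the homomorphism $tr_{\LM}$, the only cosmetic difference being that the paper invokes the fact that a C-system homomorphism preserves the operation $f\mapsto s_f$ outright, whereas you re-derive it by transporting the two defining equations and invoking uniqueness. Your fallback direct computation inside $C(\RR,\LM)$ is also sound but unnecessary given the reduction.
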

\begin{proof}
By definition $s_f$ is a morphism from $X$ to $(ft(f))^*(Y)$. Therefore it is sufficient to show that the left hand side of (\ref{2015.09.09.eq1}) agrees with the right hand side after application of the homomorphism $tr_{\LM}$ and our goal follows from Lemma \ref{2015.09.09.l1}.
\end{proof}
\begin{lemma}
\llabel{2015.09.03.l1}
For $i>0$, $f:X\sr ft^i(Y)$ and $s:ft(Y)\sr Y$ in $\wt{Ob}(C(\RR,\LM))$ one has $s:f^*(ft(Y))\sr f^*(Y)$,
$$f^*(s)=(x_0^{m+i-1},\dots,x_{m+i-2}^{m+i-1},\mbind(qq^{i-1}(f))(s(n+i-1)))$$
where $m=l(\Gamma')$ and $n=l(\Gamma)$.
\end{lemma}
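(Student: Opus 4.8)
The plan is to prove this exactly as Lemma \ref{2015.09.09.l2} was proved: by transporting the statement through the homomorphism $tr_{\LM}:C(\RR,\LM)\sr C(\RR)$ and reducing it to the computation already carried out in $C(\RR)$ in Lemma \ref{2015.08.29.l1}. Since $C(\RR,\LM)=C(\RR)[\LM]$, the functor $tr_{\LM}$ is a homomorphism of C-systems (Lemma \ref{2015.08.22.l4}) and is fully faithful (Lemma \ref{2016.01.19.l2}); its morphism component literally forgets the context data and carries the sequence representation of a morphism of $C(\RR,\LM)$ to the sequence representation of the same element of the appropriate set $RR(-,-)$. This is exactly the feature that lets the computation be pushed down to $C(\RR)$, where it is already known.

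First I would fix the bookkeeping. Write $n=l(ft^i(Y))=l(codom(f))$, so that $l(Y)=n+i$ and $m=l(X)$; the top value of $s$ is then $s(n+i-1)$, which matches the right-hand side. Since $s\circ p_Y=Id_{ft(Y)}$ and $i\ge 1$, the section $s$ is a morphism over $ft^i(Y)$ between the two objects $ft(Y)$ and $Y$, both of which lie over $ft^i(Y)$. Hence $f^*(s):f^*(ft(Y))\sr f^*(Y)$ is defined in the sense of Lemma \ref{2015.08.23.l1a}, which is the typing asserted in the statement, and the shape of its first coordinates together with Lemma \ref{2015.08.22.l5} shows that it is again an element of $\wt{Ob}(C(\RR,\LM))$.

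Next I would apply the compatibility of homomorphisms with the pullback of morphisms, Lemma \ref{2015.09.03.l2}(4), to $F=tr_{\LM}$, giving $tr_{\LM}(f^*(s))=(tr_{\LM}(f))^*(tr_{\LM}(s))$. Because $tr_{\LM}$ is faithful and preserves sequence representations, the sequence representation of $f^*(s)$ in $C(\RR,\LM)$ coincides with that of $(tr_{\LM}(f))^*(tr_{\LM}(s))$ in $C(\RR)$. Here $tr_{\LM}(f):\wh{m}\sr\wh{n}$ and $tr_{\LM}(s):\wh{n+i-1}\sr\wh{n+i}$ is a section at level $n+i-1$. Applying Lemma \ref{2015.08.29.l1} with its index taken to be $i-1$ (legitimate since $i>0$) yields $(x_0^{m+i-1},\dots,x_{m+i-2}^{m+i-1},\mbind(qq^{i-1}(f))(s(n+i-1)))$, which is precisely the claimed formula.

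The one real subtlety, and hence the point to get right, is the reparametrization against Lemma \ref{2015.08.29.l1}: in that lemma the section sits at level $n+i$ directly above the codomain of $f$, whereas here $f$ maps into $ft^i(Y)$ and the section sits one level lower, so the role of ``$i$'' there is played by $i-1$ here. Everything else — that $s$ is a morphism over $ft^i(Y)$, that $tr_{\LM}$ transfers the sequence representation, and the typing of $f^*(s)$ — is routine once the homomorphism $tr_{\LM}$ is in place, and requires no computation beyond Lemma \ref{2015.08.29.l1}.
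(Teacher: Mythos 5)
Your proposal is correct and takes essentially the same approach as the paper's own proof: verify the typing from the definition of $f^*$ on morphisms, reduce the equality of sequences to $C(\RR)$ via the fully faithful homomorphism $tr_{\LM}$ and Lemma \ref{2015.09.03.l2}(4), then apply Lemma \ref{2015.08.29.l1}. You also spell out the index reparametrization (the ``$i$'' of Lemma \ref{2015.08.29.l1} is $i-1$ here), which the paper leaves implicit but which is exactly where the formula's $m+i-1$, $qq^{i-1}(f)$ and $s(n+i-1)$ come from.
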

\begin{proof}
Since $tr_{\LM}$ is fully faithful, it is sufficient, in order to verify the equality of two morphisms to verify that their domain and codomain are equal and that their images under $tr_{\LM}$ are equal. For the domain and codomain it follows from the definition of $f^*$ on morphisms. For the images under $tr_{\LM}$ it follows from the fact that $tr_{\LM}$ is a homomorphism of C-systems, Lemma \ref{2015.09.03.l2}(4) and
Lemma \ref{2015.08.29.l1}. 
\end{proof}
\begin{problem}
\llabel{2015.08.22.prob1}
To construct a bijection
\begin{eq}
\llabel{2009.10.15.eq2}
mb_{\RR,\LM}:\wt{Ob}(C(\RR,\LM))\sr \coprod_{n\in\nat} Ob_{n+1}(\RR,\LM)\times R(n)
\end{eq}
\end{problem}
\begin{construction}\rm
\llabel{2015.08.22.constr1}
\llabel{2014.06.30.l2}
Let $s\in \wt{Ob}(C(\RR,\LM))$. Then $s:ft(X)\sr X$, $s\in R(n,n+1)$ and $X=(n+1,\Gamma)$. We set:
$$mb_{\RR,\LM}(s)=(n,(\Gamma, s(n)))$$
To show that this is a bijection let us construct an inverse. For $n\in\nat$, $\Gamma\in Ob_{n+1}(\RR,\LM)$ and $o\in R(n)$ let
$$mb_{\RR,\LM}^!(n,(\Gamma,o))=((ft((n+1,\Gamma)),(n+1,\Gamma)),(x^{n}_0,\dots,x^{n}_{n-1},o))$$
This is a morphism from $ft(X)$ to $X$ where $X=(n+1,\Gamma)$. The equation $mb_{\RR,\LM}^!(n,(\Gamma,o))\circ p_X=Id_{ft(X)}$ follows from Lemma \ref{2015.08.22.l5}. 

Let us show now that $mb_{\RR,\LM}$ and $mb_{\RR,\LM}^!$ are mutually inverse bijections. Let $s\in \wt{Ob}$ be as above, then:
$$mb_{\RR,\LM}^!(mb_{\RR,\LM}(s))=mb_{\RR,\LM}^!(n,(\Gamma,s(n)))=((ft(X),X),(x^{n}_0,\dots,x^{n}_{n-1},s(n)))=s$$
where the last equality follows from the assumption that $s\in \wt{Ob}$ and Lemma \ref{2015.08.22.l5}.

On the other hand for $\Gamma\in Ob_{n+1}(\RR,\LM)$ and $o\in R(n)$ we have
$$mb_{\RR,\LM}(mb_{\RR,\LM}^!(n,(\Gamma,o)))=mb_{\RR,\LM}(ft((n+1,\Gamma)),((n+1,\Gamma),(x^{n}_0,\dots,x^{n}_{n-1},o)))=$$
$$(n,(\Gamma,o))$$
This completes Construction \ref{2015.08.22.constr1}.
\end{construction}
\begin{lemma}
\llabel{2015.09.09.l3}
Let $f:X\sr Y$, $f=(f(0),\dots,f(n-1))$ where $X=(m,(T_0,\dots,T_{m-1}))$, $Y=(n,(T_0',\dots,T_{n-1}'))$. Then one has
$$mb_{\RR,\LM}(s_f)=(m,((T_0,\dots,T_{m-1},(f(0),\dots,f(n-2))(T_{n-1}')), f(n-1)))$$
\end{lemma}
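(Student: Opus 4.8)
The plan is to unfold the definition of $mb_{\RR,\LM}$ from Construction \ref{2015.08.22.constr1} applied to the section $s_f$. By that construction, if the codomain of a section $s\in\wt{Ob}(C(\RR,\LM))$ has the form $(k+1,\Gamma'')$, then $mb_{\RR,\LM}(s)=(k,(\Gamma'',s(k)))$; so I need two pieces of data: the codomain of $s_f$ (to identify the index $k$ and the sequence $\Gamma''$) and the value $s_f(k)$.

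First I would compute the codomain of $s_f$, which by Lemma \ref{2015.09.09.l2} is $(ft(f))^*(Y)$ with $ft(f)=f\circ p_Y$. Since the morphism sets of $C(\RR,\LM)$ are by construction those of $C(\RR)$ (and since $tr_{\LM}$ is a fully faithful homomorphism of C-systems by Lemmas \ref{2016.01.19.l2} and \ref{2015.08.22.l4}), the computation of $f\circ_C p_{\wh{n}}$ in $C(\RR)$ furnished by Lemma \ref{2015.07.24.l1}(1) transfers verbatim, giving $ft(f)=(f(0),\dots,f(n-2)):X\sr ft(Y)$. Applying the object-level formula (\ref{2015.09.09.eq3old}) for $g^*$ with $g=ft(f)$ to the object $Y=(n,(T'_0,\dots,T'_{n-1}))$ (so that the role of the top component denoted $T'_n$ in that formula is played here by $T'_{n-1}$) yields
$$(ft(f))^*(Y)=(m+1,(T_0,\dots,T_{m-1},(f(0),\dots,f(n-2))(T'_{n-1}))),$$
where the last entry is read through the coercion $LM_{Mor}$. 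This is of the form $(m+1,\Gamma')$ with $\Gamma'$ the displayed sequence, so in the notation of Construction \ref{2015.08.22.constr1} the index $k$ equals $m$ and the sequence is $\Gamma'$.

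Second, I would read off $s_f(m)$ directly from Lemma \ref{2015.09.09.l2}: since $s_f=(x_0^m,\dots,x_{m-1}^m,f(n-1))$ is an element of $Fun(stn(m+1),RR(m))$, its value at the top index $m$ is $f(n-1)$. Substituting both pieces into $mb_{\RR,\LM}(s_f)=(m,(\Gamma',s_f(m)))$ then produces exactly the asserted formula.

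The argument is essentially bookkeeping rather than containing a genuine obstacle; the one step demanding care is the identification of the codomain $(ft(f))^*(Y)$, where the lengths and indices must be kept straight: truncating $f$ to $(f(0),\dots,f(n-2))$, recognizing that it is this truncated morphism, and not $f$ itself, which acts (through $LM_{Mor}$) on the top component $T'_{n-1}$ of $Y$, and shifting the indices in (\ref{2015.09.09.eq3old}) so that the $n$ appearing there corresponds to $n-1$ here.
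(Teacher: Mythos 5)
Your proposal is correct and follows exactly the route of the paper's own (one-line) proof: apply Lemma \ref{2015.09.09.l2} to get the explicit sequence form and codomain of $s_f$, then unfold the formula for $mb_{\RR,\LM}$ from Construction \ref{2015.08.22.constr1}. The supporting details you supply (Lemma \ref{2015.07.24.l1}(1) for $ft(f)=(f(0),\dots,f(n-2))$ and formula (\ref{2015.09.09.eq3old}) for the codomain $(ft(f))^*(Y)$) are precisely what the paper leaves implicit in the word ``immediately,'' and your index bookkeeping is accurate.
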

\begin{proof}
It follows immediately from Lemma \ref{2015.09.09.l2} and the formula for $mb_{\RR,\LM}$.
\end{proof}
Consider operations $T'$, $\wt{T}'$, $S'$, $\wt{S}'$ and $\delta'$ obtained by transport by means of the bijection of Construction \ref{2015.08.22.constr1} from the operations $T$, $\wt{T}$, $S$ and $\wt{S}$ and $\delta$ corresponding to the C-system $C(\RR,\LM)$ (cf. Definition \ref{2015.08.26.def1}).  Let us give an explicit description of these operations. 

Recall that we defined, for any $Jf$-relative monad $\RR$, operations 
$$\theta^{\RR}_{m,n}:RR(m)\times RR(n)\sr RR(n-1)$$
For $\LM$ as above and $n>m$ define operations $\theta^{\LM}_{m,n}$ of the form
$$\theta^{\LM}_{m,n}:RR(m)\times LM(n)\sr LM(n-1)$$
by the formula
\begin{eq}\llabel{2015.09.07.eq2}
\theta^{\LM}_{m,n}(r,E)=$$$$(qq^{n-m-1}(x_0^m,\dots,x_{m-1}^m,r))(E)=(x_0^{n-1},\dots,x_{m-1}^{n-1},\iota_m^{n-m-1}(r),x_m^{n-1},\dots, x_{n-2}^{n-1})(E)
\end{eq}
where the second equality is the equality of Lemma \ref{2015.08.28.l1}. As in the case of $\theta^{\RR}_{m,n}$ we will often write $\theta_{m,n}$ instead of $\theta^{\LM}_{m,n}$ since the whether we consider $\theta^{\RR}$ or $\theta^{\LM}$ can be inferred from the type of the arguments.

\begin{theorem}\llabel{2015.08.26.th2}
Let $Ob=Ob(C(\RR,\LM))$ and let $\wt{Ob}'=\wt{Ob}'(\RR,\LM)$ be the right hand side of (\ref{2009.10.15.eq2}). One has:
\begin{enumerate}
\item Operation $T'$ is defined on the set $T'_{dom}$ of pairs $(m,\Gamma),(n,\Gamma')\in Ob$ where $\Gamma=(T_0,\dots,T_{m-1})$, $\Gamma'=(T_0',\dots,T_{n-1}')$ such that $m>0$, $n>m-1$ and $T_i=T_i'$ for $i=0,\dots,m-2$. It takes values in $Ob$ and is given by 
$$T((m,\Gamma),(n,\Gamma'))=$$$$(n+1,(T_0,\dots,T_{m-2},T_{m-1},\partial_{m-1}^{m-1}(T'_{m-1}),\dots,\partial_{n-1}^{m-1}(T'_{n-1})))$$
\item Operation $\wt{T}'$ is defined on the set $\wt{T}'_{dom}$ of pairs $(m,\Gamma)\in Ob$, $(n,(\Gamma',s))\in\wt{Ob}'$ where $\Gamma=(T_0,\dots,T_{m-1})$, $\Gamma'=(T_0',\dots,T_{n-1}')$ such that $m>0$, $n+1>m-1$ and $T_i=T_i'$ for $i=0,\dots,m-2$. It takes values in $\wt{Ob'}$ and is given by
$$\wt{T}'((m,\Gamma),(n,(\Gamma',s)))=(n+1,(T((m,\Gamma),(n,\Gamma')),\partial_{n}^{m-1}(s)))$$
\item Operation $S'$ is defined on the set of pairs $(m,(\Gamma,r))\in \wt{Ob}'$, $(n,\Gamma')\in Ob$ where $\Gamma=(T_0,\dots,T_{m})$, $\Gamma'=(T_0',\dots,T_{n-1}')$ such that $n>m+1$ and $T_i=T_i'$ for $i=0,\dots,m$. It takes values in the set $Ob$ and is given by 
$$S'((m,(\Gamma,r)),(n,\Gamma'))=$$$$(n-1,(T_0',\dots,T_{m-1}',\theta_{m,m+1}(r,T_{m+1}'),\theta_{m,m+2}(r,T_{m+2}'),\dots,\theta_{m,n-1}(r,T_{n-1}')))$$
\item Operation $\wt{S}'$ is defined on the set of pairs $(m,(\Gamma,r))\in \wt{Ob}'$, $(n,(\Gamma',s))\in \wt{Ob}'$ where $\Gamma=(T_0,\dots,T_{m})$, $\Gamma'=(T_0',\dots,T_{n}')$ such that $n>m$ and $T_i=T_i'$ for $i=0,\dots,m$. It takes values in $\wt{Ob}'$ and is given by 
$$\wt{S}'((m,(\Gamma,r)),(n,(\Gamma',s)))=(n-1,(S'((m,(\Gamma,r)),(n+1,\Gamma'))),\theta_{m,n}(r,s))$$
\item Operation $\delta'$ is defined on the subset of $(m,\Gamma)$ in $Ob$ such that $m>0$. It takes values in $\wt{Ob}'$ and is given by 
$$\delta'((m,\Gamma))=(m,(T((m,\Gamma),(m,\Gamma)),x_{m-1}^m))$$
\end{enumerate}
\end{theorem}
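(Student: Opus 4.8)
The plan is to treat the five operations one at a time, exactly paralleling the proof of Theorem \ref{2015.08.26.th1}, and to exploit the fully faithful homomorphism of C-systems $tr_{\LM}:C(\RR,\LM)\sr C(\RR)$ (Lemmas \ref{2016.01.19.l2} and \ref{2015.08.22.l4}) to reduce every assertion about lengths, about underlying objects of $C(\RR)$, and about $RR$-valued data to the computations already carried out in Theorem \ref{2015.08.26.th1}. This leaves, in each case, only the explicit $\LM$-valued context components to be computed by hand. For the domains I would unfold Definition \ref{2015.08.26.def1}, insert the identification $\partial(mb^!_{\RR,\LM}(n,(\Gamma,o)))=n+1$ coming from Construction \ref{2015.08.22.constr1}, and translate each ``over'' condition $\Gamma'>ft(\Gamma)$ or $\Gamma>\partial(r)$ into the displayed conjunction of a length inequality together with the equalities $T_i=T_i'$ that express agreement of the shared initial segments of the two contexts.

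The two object-valued operations come first. Operation $T'$ involves no element of $\wt{Ob}$, so it is literally the operation $T(\Gamma,\Gamma')=p_\Gamma^*(\Gamma')$ of $C(\RR,\LM)$, and its formula is read off directly from Lemma \ref{2015.08.26.l8}, whose output is exactly the claimed value. For $S'$ I would apply $mb^!_{\RR,\LM}$ to the first argument to obtain the section $r'=(x_0^m,\dots,x_{m-1}^m,r)$, compute $S((m,(\Gamma,r)),(n,\Gamma'))=r'^*(\Gamma')$ by iterating the object pull-back formula (\ref{2015.09.09.eq3old}), and identify each resulting context component with $\theta_{m,j}^{\LM}(r,T'_j)$ straight from the definition (\ref{2015.09.07.eq2}) of $\theta^{\LM}$ and Lemma \ref{2015.08.28.l1}; this is the module-valued analogue of parts (3) and (4) of Theorem \ref{2015.08.26.th1}.

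The section-valued operations $\wt{T}'$ and $\wt{S}'$ are the substance of the argument. In each I would apply $mb^!_{\RR,\LM}$ to the input section, compute the pulled-back section by Lemma \ref{2015.09.03.l1}, and then apply $mb_{\RR,\LM}$. By the definition of $f^*$ on morphisms the underlying object and its context split off as the corresponding object-valued operation already computed ($T'$ for $\wt{T}'$ and $S'$ for $\wt{S}'$), so only the single new last component $\mbind(qq^{i-1}(f))(s(\,\cdot\,))$ of Lemma \ref{2015.09.03.l1} remains. For $\wt{T}'$ one has $f=p_{(m,\Gamma)}=L(\iota_{m-1}^1)$ and $i=n-m+2$, whence $qq^{i-1}(L(\iota_{m-1}^1))=L(\partial_n^{m-1})$ by Lemma \ref{2015.08.26.l3a}, and the last component becomes $\partial_n^{m-1}(s)$. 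For $\wt{S}'$ one takes $i=n-m$, so that $qq^{i-1}=qq^{n-m-1}$, and combines Lemma \ref{2015.09.03.l1} with Lemma \ref{2015.08.28.l1} to recognise the last component as $\theta_{m,n}(r,s)$. Finally $\delta'$ follows at once from Lemma \ref{2015.09.09.l3} applied to $f=Id_{(m,\Gamma)}$: since the components of $Id$ are the $x_i^m$, the new context component is $\partial_{m-1}^{m-1}(T_{m-1})$, so the context is $T((m,\Gamma),(m,\Gamma))$ by Lemma \ref{2015.08.26.l8}, and the separated value is $x_{m-1}^m$.

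The main obstacle will be the index bookkeeping in $\wt{T}'$ and $\wt{S}'$: one must choose the correct value of $i$ in Lemma \ref{2015.09.03.l1} for each operation, keep the $\LM$-valued context components strictly separate from the single $RR$-valued component produced by the section, and match the abstract pulled-back datum $\mbind(qq^{i-1}(f))(s(\,\cdot\,))$ to the named operators $\partial_n^{m-1}$ and $\theta_{m,n}$. Everything else is routine unfolding, and the passage through $tr_{\LM}$ guarantees that the underlying $C(\RR)$-data stay consistent with Theorem \ref{2015.08.26.th1}.
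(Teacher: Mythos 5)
Your proposal is correct and follows essentially the same route as the paper's proof: transport through $mb_{\RR,\LM}$ and $mb^!_{\RR,\LM}$, reduction of the section-valued cases to the pulled-back-section formula of Lemma \ref{2015.09.03.l1}, and identification of the resulting components via Lemmas \ref{2015.08.26.l3a} and \ref{2015.08.28.l1}, with your indices $i=n-m+2$ (for $\wt{T}'$) and $i=n-m$ (for $\wt{S}'$) matching the paper's exactly. The only minor deviations are that for $S'$ the paper packages your ``iteration of (\ref{2015.09.09.eq3old})'' as the single Lemma \ref{2016.01.31.l1}, and for $\delta'$ it argues through the fully faithful $tr_{\LM}$ together with Lemmas \ref{2015.09.03.l2}(5) and \ref{2015.08.24.l5} rather than your equally valid direct appeal to Lemma \ref{2015.09.09.l3} with $f=Id_{(m,\Gamma)}$.
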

\begin{proof}
In the proof we will write $mb$ and $mb^!$ instead of $mb_{\RR,\LM}$ and $mb^!_{\RR,\LM}$. We have:
\begin{enumerate}
\item Operation $T'$ is the same as operation $T$ for $C(\RR,\LM)$ since $\wt{Ob}$ is not involved in it. The form of $T'_{dom}$ is obtained by unfolding definitions. 

The operation itself is given by 
$$T'((m,\Gamma),(n,\Gamma'))=p_{(m,\Gamma)}^*((n,\Gamma'))=$$
$$(m,(T_0,\dots,T_{m-1},\partial_{m-1}^{m-1}(T'_{m-1}),\dots, \partial_{n-1}^{m-1}(T_{n-1}')))$$
where the first equality is by Definition \ref{2015.08.26.def1}(1) and the second by Lemma \ref{2015.08.26.l8}. 
\item Operation $\wt{T}'$ is defined on the set of pairs $(m,\Gamma)\in Ob$, $(n,(\Gamma',s))\in \wt{Ob}'$ such that $m>0$ and $\partial(mb^!(n,(\Gamma',s)))>ft(m,\Gamma)$ and takes values in $\wt{Ob}'$. Since $\partial(mb^!(n,(\Gamma',s))=(n+1,\Gamma')$ we obtain the required domain by unfolding definitions. 

To verify the formula for the operation itself consider the equalities:
$$\wt{T}'((m,\Gamma),(n,(\Gamma',s)))=mb(p_{(m,\Gamma)}^*(mb^!(n,(\Gamma',s))))=$$
$$mb(p_{(m,\Gamma)}^*((ft((n+1,\Gamma')),((n+1,\Gamma'),(x^{n}_0,\dots,x^{n}_{n-1},s)))))$$
where the first equality is by Definition \ref{2015.08.26.def1}(2). By Lemma \ref{2015.09.03.l1} we can extend these equalities as follows:
$$mb(p_{(m,\Gamma)}^*((ft((n+1,\Gamma')),((n+1,\Gamma'),(x^{n}_0,\dots,x^{n}_{n-1},s)))))=$$
$$mb(p_{X}^*(ft(Y)),(p_{X}^*(Y),(x^{n+1}_0,\dots,x^{n+1}_{n},(qq^{n-m+1}(\iota^1_{m-1}))(s))))=$$
$$(n+1,(p_{X}^*(Y),\partial_{n}^{m-1}(s)))=(n+1,(T((m,\Gamma),(n+1,\Gamma')),\partial_n^{m-1}(s)))$$
where $X=(m,\Gamma)$, $Y=(n+1,\Gamma')$, the first equality is by Lemma \ref{2015.09.03.l1}, the second by Lemma \ref{2015.08.26.l3a} and the third by Definition \ref{2015.08.26.def1}(1). 
\item Operation $S'$ is defined on the set of pairs $((m,(\Gamma,r))\in \wt{Ob}',(n,\Gamma')\in Ob)$ such that $(n,\Gamma')>\partial(mb^!(m,(\Gamma,r)))$ and takes values in $Ob$. Since $\partial(mb^!(m,(\Gamma,r)))=(m+1,\Gamma)$ we obtained the required domain of definition. The operation itself is given by
\begin{eq}\llabel{2016.01.21.eq2}
S'((m,(\Gamma,r)),(n,\Gamma'))=(mb^!((m,(\Gamma,r))))^*((n,\Gamma'))
\end{eq}
Next we have 
$$(mb^!((m,(\Gamma,r))))^*((n,\Gamma'))=$$$$((ft(A),A),(x^{m}_0,\dots,x^{m}_{m-1},r))^*(B)=((ft(A),A),(x^{m}_0,\dots,x^{m}_{m-1},r))^*(B,i)$$
%##
where $A=(m+1,\Gamma)$, $B=(n,\Gamma')$ and $i=n-m-1$. To apply Lemma \ref{2016.01.31.l1} we should take $X=\wh{m}$, $lx=m$ and $Y=\wh{n}$, $ly=n$ and $f=((ft(A),A),(x^{m}_0,\dots,x^{m}_{m-1},r))$. Let further $rr=(x^{m}_0,\dots,x^{m}_{m-1},r)$. Then we can extend these equalities as follows
$$f^*((n,\Gamma'),i)=(rr^*(\wh{n},i),(T_0,\dots,T_{m-1},q(rr,ft^i(\wh{n}),0)(T'_{m+1}),\dots,q(rr,ft(\wh{n}),i-1)(T'_{n-1})))$$
$$(n-1,(T_0,\dots,T_{m-1},rr(T'_{m+1}),\dots,qq^{n-m-2}(rr)(T'_{n-1})))=$$
$$(n-1,(T_0',\dots,T_{m-1}',rr(T_{m+1}'),qq(rr)(T_{m+2}'),\dots,qq^{n-m-2}(rr)(T_{n-1}')))$$
where the last equality holds by the assumption that $T_i=T_i'$ for $i=0,\dots,m$. 
The required formula follows from the equality 
$$qq^j(rr)(T_{m+j+1}')=\theta_{m,m+j+1}(r,T_{m+j+1}').$$
\item Operation $\wt{S}'$ is defined on the set of pairs $(m,(\Gamma,r))\in \wt{Ob}'$, $(n,(\Gamma',s))\in \wt{Ob}'$ such that 
\begin{eq}\llabel{2016.01.21.eq1}
\partial(mb^!((n,(\Gamma',s))))>\partial(mb^!(m,(\Gamma,r)))
\end{eq}
and takes values in $\wt{Ob}'$. The inequality (\ref{2016.01.21.eq1}) is equivalent to 
$$(n+1,\Gamma')>(m+1,\Gamma)$$
which is, in turn, equivalent to the conditions in the theorem. In the computation  below let us sometimes abbreviate $((X,Y),f)$ to $f$. Let
$$rr=(x_0^m,\dots,x_{m-1}^m,r)$$
$$ss=(x_0^n,\dots,x_{n-1}^n,s)$$
Then the operation itself is given by:
$$\wt{S}'((m,(\Gamma,r)),(n,(\Gamma',s)))=mb((mb^!(m,(\Gamma,r)))^*(mb^!((n,(\Gamma',s)))))=mb(rr^*ss)=$$$$mb((x_0^{n-1},\dots,x^{n-1}_{n-2},(qq^{n-m-1}(rr))(s)))=(n-1,(rr^*((n+1,\Gamma')),(qq^{n-m-1}(rr))(s)))=$$
$$(n-1,(S'((m,(\Gamma,r)),(n+1,\Gamma'))),\theta_{m,n}(r,s))$$
where the third equality is by Lemma \ref{2015.09.03.l1} and the fifth by (\ref{2016.01.21.eq2}) and the definition of $\theta_{m,n}(r,s)$. 
\item Operation $\delta'$ is defined on the subset $(m,\Gamma)\in Ob$ such that $m>0$ and is given by
$$\delta'((m,\Gamma))=mb(\delta((m,\Gamma)))$$
Therefore it is sufficient to show that
$$\delta((m,\Gamma))=(((m,\Gamma),p_{(m,\Gamma)}^*((m,\Gamma))),(x_0^m,\dots,x_{m-1}^m,x_{m-1}^m))$$
By Definition \ref{2015.08.26.def1}(5), $\delta((m,\Gamma))$ is a morphism from $(m,\Gamma)$ to $p_{(m,\Gamma)}^*((m,\Gamma))$. Therefore, since $tr_{\LM}$ is a fully faithful functor it is sufficient to show that 
$$tr_{\LM}(\delta((m,\Gamma)))=((\wh{m},\wh{m+1}),(x_0^m,\dots,x_{m-1}^m,x_{m-1}^m))$$
which follows from Lemma \ref{2015.09.03.l2}(5) and Lemma \ref{2015.08.24.l5}.
\end{enumerate}
This completes the proof of the theorem. 
\end{proof}
The length function on $Ob$ is described above. Of the remaining three operations that define the pre-B-system structure on the pair of sets $(Ob,\wt{Ob}')$ - $pt$, $ft$ and $\partial'$, the first two are described above as well and $\partial'$ is given by $\partial'((m,(\Gamma,r)))=(m+1,\Gamma)$. 

This completes the description of the pre-B-system structure on $(Ob,\wt{Ob}')$ that is obtained by the transport of structure from the standard pre-B-system structure on $(Ob,\wt{Ob})$ by means of the pair of isomorphisms $Id$ and $mb_{\RR,\LM}$.
\begin{remark}\rm
\llabel{2015.09.13.rem1}
Given an $Jf$-relative monad $\RR$ in the form  $l\RR=(lR,l,\eta,\partial,\theta)$ of Remark \ref{2015.08.29.rem1} we can define a left l-module $l\LM$ over $\RR$ as a quadruple:
\begin{enumerate}
\item a set $lLM$,
\item a function $l:lLM\sr \nn$,
\item a function $\partial:\{E\in lLM, i\in\nat\,|\, l_{\LM}(E)\ge i\}\sr lLM$,
\item a function $\theta^{\LM}:\{r\in lR,E\in lLM\,|\,l_{\LM}(E)>l_{\RR}(r)\}\sr lLM$
\end{enumerate}
where operations $l,\partial$ and $\theta^{\LM}$ satisfy some conditions. 

Once these conditions are properly established the category of such pairs $(l\RR,l\LM)$ should be equivalent to the Hirschowitz-Maggesi ``large module category'' category (see \cite[Definition 2.9]{HM2008})  and in particular the systems of expressions associated with binding signatures can be described as universal objects carrying some additional operations in this category. 

These l-versions of the relative monads and their modules should be easier to formalize in systems such as HOL.  
\end{remark} 

{\em Acknowledgements:} 
\begin{enumerate}
\item Work on this paper was supported by NSF grant 1100938.
\item This material is based on research sponsored by The United States Air Force Research Laboratory under agreement number FA9550-15-1-0053. The US Government is authorized to reproduce and distribute reprints for Governmental purposes notwithstanding  any copyright notation thereon.

The views and conclusions contained herein are those of the author and should not be interpreted as necessarily representing the official policies or endorsements, either expressed or implied, of the United States Air Force Research Laboratory, the U.S. Government or Carnegie Mellon University.
\end{enumerate}

%## Functoriality

\def\cprime{$'$}

%\bibliography{../../../alggeom}
%\bibliographystyle{plain}

\end{document}